\definecolor{darkred}{RGB}{139,0,0}
\definecolor{darkblue}{RGB}{0,0,139}
\definecolor{darkgreen}{RGB}{0,100,0}
\tikzset{
  symbol/.style={
    draw=none,
    every to/.append style={
      edge node={node [sloped, allow upside down, auto=false]{$#1$}}}}}
   \def\MR#1{}}
\newcommand{\circled}[1]{\raisebox{.5pt}{\textcircled{\raisebox{-.9pt} {#1}}}}
\newcommand\smallsquare{\mathbin{\text{\raise0.17ex\hbox{\scalebox{.7}{$\blacksquare$}}}}}
\newcommand{\id}{\ensuremath{\mathrm{id}}}
\newcommand{\im}{\ensuremath{\mathrm{im}}}
\newcommand{\SO}{\ensuremath{\mathrm{SO}}}
\newcommand{\BO}{\ensuremath{\mathrm{BO}}}
\newcommand{\EO}{\ensuremath{\mathrm{EO}}}
\newcommand{\BSO}{\ensuremath{\mathrm{BSO}}}
\newcommand{\Diff}{\ensuremath{\mathrm{Diff}}}
\newcommand{\Diffor}{\ensuremath{\mathrm{Diff}^{\scaleobj{0.8}{+}}}}
\newcommand{\BlockDiff}{\ensuremath{\widetilde{\mathrm{Diff}}}}
\newcommand{\BlockMaps}{\ensuremath{\widetilde{\mathrm{Map}}}}
\newcommand{\BC}{\ensuremath{\mathrm{BC}}}
\newcommand{\BDiff}{\ensuremath{\mathrm{BDiff}}}
\newcommand{\BlockBDiff}{\ensuremath{\mathrm{B\widetilde{Diff}}}}
\newcommand{\Bun}{\ensuremath{\mathrm{Bun}}}
\newcommand{\BlockBun}{\ensuremath{\mathrm{\widetilde{Bun}}}}
\newcommand{\hAut}{\ensuremath{\mathrm{hAut}}}
\newcommand{\BhAut}{\ensuremath{\mathrm{BhAut}}}
\newcommand{\BlockhAut}{\ensuremath{\mathrm{\widetilde{hAut}}}}
\newcommand{\BlockBhAut}{\ensuremath{\mathrm{B\widetilde{hAut}}}}
\newcommand{\interior}[1]{\ensuremath{\mathrm{int}(#1)}}
\newcommand{\Iso}{\ensuremath{\mathrm{Iso}}}
\newcommand{\catsingle}[1]{\ensuremath{\mathcal{#1}}}
\newcommand{\oC}{\ensuremath{\mathrm{C}}}
\newcommand{\oH}{\ensuremath{\mathrm{H}}}
\newcommand{\oK}{\ensuremath{\mathrm{K}}}
\newcommand{\oO}{\ensuremath{\mathrm{O}}}
\newcommand{\oB}{\ensuremath{\mathrm{B}}}
\newcommand{\bfF}{\ensuremath{\mathbf{F}}}
\newcommand{\bfL}{\ensuremath{\mathbf{L}}}
\newcommand{\bfN}{\ensuremath{\mathbf{N}}}
\newcommand{\bfR}{\ensuremath{\mathbf{R}}}
\newcommand{\bfZ}{\ensuremath{\mathbf{Z}}}
\newcommand{\bfQ}{\ensuremath{\mathbf{Q}}}
\newcommand{\bfS}{\ensuremath{\mathbf{S}}}
\newcommand{\bfHZ}{\ensuremath{\mathbf{HZ}}}
\newcommand{\bfHF}{\ensuremath{\mathbf{HF}}}
\newcommand{\cC}{\ensuremath{\catsingle{C}}}
\newcommand{\cL}{\ensuremath{\catsingle{L}}}
\newcommand{\cM}{\ensuremath{\catsingle{M}}}
\newcommand{\cN}{\ensuremath{\catsingle{N}}}
\newcommand{\cS}{\ensuremath{\catsingle{S}}}
\newcommand{\Maps}{\ensuremath{\mathrm{Maps}}}
\newcommand{\hofib}{\ensuremath{\mathrm{hofib}}}
\newcommand{\colim}{\ensuremath{\mathrm{colim}}}
\newcommand{\hocolim}{\ensuremath{\mathrm{hocolim}}}
\newcommand{\ra}{\rightarrow}
\newcommand{\la}{\leftarrow}
\newcommand{\lra}{\longrightarrow}
\newcommand{\lla}{\longleftarrow}
\newcommand{\xra}[1]{\xrightarrow{#1}}
\newcommand{\xlra}[1]{\overset{#1}{\longrightarrow}}
\newcommand{\xlrahook}[1]{\overset{#1}{\lhook\joinrel\longrightarrow}}
\newcommand{\xlla}[1]{\overset{#1}{\longleftarrow}}
\newcommand{\longhookrightarrow}{\lhook\joinrel\longrightarrow}
\newcommand{\BGL}{\mathrm{BGL}}
\newcommand{\GL}{\mathrm{GL}}
\newcommand{\Aut}{\mathrm{Aut}}
\newcommand{\Der}{\mathrm{Der}}
\newcommand{\Hom}{\mathrm{Hom}}
\newcommand{\Sym}{\mathrm{Sym}}
\newcommand{\ext}{\mathrm{ext}}
\newcommand{\sfr}{\mathrm{sfr}}
\newcommand{\inc}{\mathrm{inc}}
\newcommand{\pr}{\mathrm{pr}}
\newcommand{\ev}{\mathrm{ev}}
\newcommand{\Wh}{\mathrm{Wh}}
\newcommand{\BG}{\mathrm{BG}}
\newcommand{\G}{\mathrm{G}}
\newcommand{\TOP}{\mathrm{Top}}
\newcommand{\BTOP}{\mathrm{BTop}}
\newcommand{\Mod}[1]{\ (\mathrm{mod}\ #1)}
\newtheorem{bigthm}{Theorem}
\newtheorem{bigcor}[bigthm]{Corollary}
\newtheorem{thm}{Theorem}[section]
\newtheorem{lem}[thm]{Lemma}
\newtheorem{prop}[thm]{Proposition}
\newtheorem{cor}[thm]{Corollary}
\theoremstyle{definition}
\newtheorem*{notation}{Notation}
\theoremstyle{remark}
\newtheorem{ex}[thm]{Example}
\newtheorem{rem}[thm]{Remark}
\newtheorem*{nrem}{Remark}
\newcommand{\mapnoname}[4]{\ensuremath{\begin{array}{rcl} 
      #1 & \longrightarrow & #2 \\[0.3em] 
      #3 & \longmapsto & #4
    \end{array}}}
\newcommand{\map}[5]{\ensuremath{#1\colon\begin{array}{rcl} 
      #2 & \longrightarrow & #3 \\[0.3em] 
      #4 & \longmapsto & #5
    \end{array}}}   
\begin{document}

\title{A homological approach to pseudoisotopy theory. I}
\author{Manuel Krannich}
\email{krannich@dpmms.cam.ac.uk}
\subjclass[2010]{57R52, 19D50, 57R65, 55P47}
\address{Centre for Mathematical Sciences, Wilberforce Road, Cambridge CB3 0WB, UK}

\begin{abstract}
We construct a zig-zag from the once delooped space of pseudoisotopies of a closed $2n$-disc to the once looped algebraic $K$-theory space of the integers and show that the maps involved are $p$-locally $(2n-4)$-connected for $n>3$ and large primes $p$. The proof uses the computation of the stable homology of the moduli space of high-dimensional handlebodies due to Botvinnik--Perlmutter and is independent of the classical approach to pseudoisotopy theory based on Igusa's stability theorem and work of Waldhausen. Combined with a result of Randal-Williams, one consequence of this identification is a calculation of the rational homotopy groups of $\mathrm{BDiff}_\partial(D^{2n+1})$ in degrees up to $2n-5$.
\end{abstract}

\maketitle
The homotopy type of the group of \emph{pseudoisotopies}, or \emph{concordance diffeomorphisms},
\[\oC(M)=\{\phi\colon M\times [0,1]\xlra{\cong} M\times [0,1]\mid \phi|_{M\times\{0\}\cup\partial M\times [0,1]}=\id\},\] of a smooth compact $d$-dimensional manifold $M$ in the smooth topology has been an object of interest to geometric topologists for many years, not least because of its intimate connection to algebraic $K$-theory already visible on the level of path components. 
Building on Cerf's proof that $\oC(M)$ is connected if $M$ is simply connected and $d\ge5$ \cite{Cerf}, Hatcher and Wagoner \cite{HatcherWagoner} computed the group $\pi_0\oC(M)$ of isotopy classes of concordances in high dimensions by relating it to the lower algebraic $K$-groups of the integral group ring $\bfZ[\pi_1M]$ of the fundamental group of $M$.\footnote{See also Igusa's corrections in \cite{IgusaWhatHappens}.} Beyond its components, the homotopy type of the space of concordances $\oC(M)$ and its relation to $K$-theory has so far been studied in two steps: deep work of Igusa \cite{IgusaStability} shows that the \emph{stabilisation map} 
\[\oC(M)\lra \oC(M\times [0,1])\] induced by crossing with an interval is $\min(\frac{d-4}{3},\frac{d-7}{2})$-connected, so in this range up to about a third of the dimension, one may consider the \emph{stable concordance space} $\colim_k\oC(M\times [0,1]^k)$ instead, which in turn admits a complete description in terms of Waldhausen's generalised algebraic $K$-theory for spaces by Waldhausen, Jahren, and Rognes' foundational \emph{stable parametrised $h$-cobordism theorem} \cite{WJR}.

In this work, we focus on the case $M=D^{2n}$ of a closed disc of even dimension and study its space of concordances via a new route---independent of the classical approach---which does not involve stabilising the dimension and is, vaguely speaking, \emph{homological} instead of \emph{homotopical}; we shall elaborate on this at a later point. Our main result relates the delooped concordance space $\BC(D^{2n})$ to the once looped algebraic $K$-theory space of the integers $\Omega^{\infty+1}\oK(\bfZ)$ in a range up to approximately the dimension, $p$-locally for primes $p$ that are large with respect to the dimension and the degree.

\begin{bigthm}\label{thm:mainthm}For $n>3$, there exists a zig-zag \[\BC(D^{2n})\lra\cdot\lla \Omega^{\infty+1}_0\oK(\bfZ)\] whose maps are $p$-locally $\min(2n-4,2p-4-n)$-connected for primes $p$.\end{bigthm}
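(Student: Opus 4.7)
My plan is to construct the zig-zag via the moduli space of $(2n+1)$-dimensional handlebodies $V_g = \natural^g(D^{n+1}\times S^n)$, whose stable homology was computed by Botvinnik--Perlmutter. A pseudoisotopy of $D^{2n}$ determines a path in $\Diff_\partial(D^{2n})$ by restriction to time-slices, which by regluing $V_g$ along an embedded disc $D^{2n} \subset \partial V_g$ produces a loop in $\BDiff_\partial(V_g)$. This gives a natural map $\BC(D^{2n}) \to \Omega\BDiff_\partial(V_g)$, compatible with stabilisation in $g$. After taking $g\to\infty$ and group-completing, by Botvinnik--Perlmutter the target becomes $\Omega^{\infty+1}_0 \MT\theta$ for $\MT\theta$ the Madsen--Tillmann spectrum of the tangential structure $\theta\colon BO(2n+1)\langle n\rangle\to BO(2n+1)$. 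This yields the left-pointing map of the zig-zag.

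For the right-pointing map $\Omega^{\infty+1}_0\oK(\bfZ)\to\Omega^{\infty+1}_0\MT\theta$, I would produce a map of spectra $\oK(\bfZ)\to\MT\theta$ via a trace or transfer construction, then show it is highly connected. The connectivity claim then splits into two bounds. The integral bound $2n-4$ should come from combining Perlmutter's homological stability for $\BDiff_\partial(V_g)$ with the Botvinnik--Perlmutter identification: stability propagates the stable range to give an approximation of $\Omega^\infty_0\MT\theta$ in a range of about $2n$ after looping. The $p$-local bound $2p-4-n$ reflects the structure of $\MT\theta$: because $\theta$ is an $n$-connected cover, the first cells of $\MT\theta$ distinguishing it from an appropriate $\oK(\bfZ)$-model appear in degrees jointly controlled by $n$ and $p$, giving a cut-off near $2p-4-n$.

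The main obstacle is the identification of $\MT\theta$ with $\oK(\bfZ)$ in the stated $p$-local range, since the present approach is advertised as independent of the Waldhausen $A$-theoretic framework that would usually provide such an identification. I would attack this by using a Thom-isomorphism description of $\MT\theta$ to reduce to a computation involving $BO(2n+1)\langle n\rangle_+$, and then comparing cell-by-cell at a prime $p$ with a $p$-local model of $\oK(\bfZ)$ (for instance Quillen's description of $\oK(\bfZ)_{(p)}$ via fixed points of Adams operations). A secondary difficulty is verifying that the handlebody gluing construction agrees up to homotopy with the parametrised-surgery map underlying the Botvinnik--Perlmutter equivalence, which requires carefully tracking the $\theta$-structures involved.
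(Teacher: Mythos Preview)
Your proposal has a fundamental structural gap: it makes no use of \emph{block} diffeomorphisms, which are the linchpin of the argument. The zig-zag in the paper arises from the fibre sequence
\[
\BlockDiff_{D^{2n}}(V_g)/\Diff_{D^{2n}}(V_g) \lra \BDiff^{\sfr}_{D^{2n}}(V_g;\ell_0)_\ell \lra \BlockBDiff^{\sfr}_{D^{2n}}(V_g;\ell_0)_\ell,
\]
passed to the colimit over $g$. The fibre is identified with $\BC(D^{2n})$ in a range via Morlet's lemma of disjunction (this is the source of the bound $2n-4$); the total space is handled by Botvinnik--Perlmutter; and the base is compared to $\BGL_\infty(\bfZ)^+$ using surgery theory together with a $p$-local computation of $\hAut_{D^{2n}}(V_g,W_{g,1})$ (this is where $2p-4-n$ enters, ultimately because $\G/\oO\to\BO$ is $p$-locally $(2p-3)$-connected). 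The intermediate space $X$ is then the homotopy fibre of $\BDiff^{\sfr}(V_\infty)^+ \to \BGL_\infty(\bfZ)^+$. Without the block/non-block comparison you have no mechanism linking $\BC(D^{2n})$ to $K$-theory at all.

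There are also concrete errors in your outline. First, a pseudoisotopy does \emph{not} restrict to diffeomorphisms on time-slices---that is precisely what distinguishes a pseudoisotopy from an isotopy---so your map $\BC(D^{2n})\to\Omega\BDiff_\partial(V_g)$ is not defined as stated. Second, the Botvinnik--Perlmutter theorem for handlebodies gives $\Omega^\infty\Sigma^\infty_+ B$, not a Madsen--Tillmann spectrum $\MT\theta$; for the stable framing structure this is $\Omega^\infty_0\Sigma^\infty_+\SO/\SO(2n+1)$, which is $p$-locally highly \emph{connected} and therefore plays the role of a nearly contractible term in the fibre sequence rather than the $K$-theoretic one. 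Third, your proposed spectrum map $\oK(\bfZ)\to\MT\theta$ points the wrong way and no trace or Adams-operation comparison will produce the stated range; the $K$-theory instead enters through the action of $\Diff_{D^{2n}}(V_g)$ on $\oH_n(V_g;\bfZ)\cong\bfZ^g$, giving a map to $\BGL_g(\bfZ)$ whose stabilised plus-construction is $\Omega^\infty_0\oK(\bfZ)$.
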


\begin{nrem}The result we prove is slightly stronger than stated here (see \cref{thm:mainthmtechnical}) and implies for instance that $\pi_{2n-4}\BC(D^{2n})\otimes\bfQ$ surjects onto $K_{2n-3}(\bfZ)\otimes\bfQ$ as long as $n>3$.
\end{nrem}

When combined with Borel's work on the stable rational cohomology of arithmetic groups \cite{BorelI}, \cref{thm:mainthm} provides an isomorphism
\[\pi_*\BC(D^{2n})\otimes \bfQ\cong K_{*+1}(\bfZ)\otimes\bfQ\cong\begin{cases}\bfQ&\text{if }*\equiv0\Mod{4}\\0&\text{otherwise}\end{cases}\quad\text{ for }0<*<2n-4,\] and an epimorphism $\pi_*\BC(D^{2n})\otimes \bfQ\ra K_{*+1}(\bfZ)\otimes\bfQ$ in degree $2n-4$, which goes significantly beyond the range that was previously accessible by relying on Igusa's stability result and shows for instance that $\BC(D^{8})$ is nontrivial, even rationally. Given that the $K$-groups $K_{*}(\bfZ)$ are known to contain $p$-torsion for comparatively large primes with respect to the degree due to contributions from Bernoulli numbers, \cref{thm:mainthm} also exhibits many new torsion elements in $\pi_*\BC(D^{2n})$, such as one of order $691$ in $\pi_{21}\BC(D^{2n})$ as long as $n>12$ resulting from the fact that $K_{22}(\bfZ)$ is cyclic of that order.

\bigskip

In the remainder of this introduction, we explain more direct applications of \cref{thm:mainthm} and conclude by indicating some of the ideas that go into its proof.

\subsection*{Diffeomorphisms and concordances of odd discs}
Restricting a concordance to the moving part of its boundary induces a homotopy fibre sequence \[\Diff_\partial(D^{d+1})\lra \oC(D^{d})\lra \Diff_\partial(D^{d})\] that compares the group of concordances $\oC(D^{d})$ of a $d$-disc to its group of its diffeomorphisms $\Diff_\partial(D^d)$ fixing the boundary pointwise. By a result of Randal-Williams \cite[Thm\,4.1]{RWUpperBound} based on Morlet's lemma of disjunction and work of Berglund and Madsen \cite{BerglundMadsen} (a combination which incidentally inspired parts of our strategy to prove \cref{thm:mainthm}), the space $\BDiff_\partial(D^{2n})$ is rationally $(2n-5)$-connected, so the delooped maps
\[\BDiff_\partial(D^{2n+1})\lra\BC(D^{2n})\quad\text{and}\quad\BC(D^{2n+1})\lra\BDiff_\partial(D^{2n+1})\] are rationally $(2n-5)$-connected as well, resulting in the following corollary of \cref{thm:mainthm}.

\begin{bigcor}\label{cor:odddiscandconc}There exist isomorphisms
\[\pi_*\BDiff_\partial(D^{2n+1})\otimes\bfQ\cong\pi_*\BC(D^{2n+1})\otimes\bfQ\cong K_{*+1}(\bfZ)\otimes\bfQ\] in degrees $*<2n-5$ and epimorphisms \[\pi_*\BDiff_\partial(D^{2n+1})\otimes\bfQ\lra K_{*+1}(\bfZ)\otimes\bfQ\quad\text{ and }\quad\pi_*\BC(D^{2n+1})\otimes\bfQ\lra K_{*+1}(\bfZ)\otimes\bfQ\] in degree $2n-5$.
\end{bigcor}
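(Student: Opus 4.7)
The plan is essentially formal: the corollary follows by composing the zig-zag of \cref{thm:mainthm}, suitably rationalised, with the two rationally $(2n-5)$-connected maps recalled immediately before the statement, and then applying Borel's calculation of $K_*(\bfZ) \otimes \bfQ$.

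First, I would extract rational information from \cref{thm:mainthm}. Since $\min(2n-4,\, 2p-4-n) = 2n-4$ as soon as $p \geq \lceil 3n/2 \rceil$, the zig-zag $\BC(D^{2n}) \to \cdot \leftarrow \Omega^{\infty+1}_0 \oK(\bfZ)$ is $p$-locally $(2n-4)$-connected for such a prime $p$, and hence rationally $(2n-4)$-connected. Borel's theorem then identifies $\pi_*(\Omega^{\infty+1}_0 \oK(\bfZ)) \otimes \bfQ$ with $K_{*+1}(\bfZ) \otimes \bfQ$, which in the relevant range is $\bfQ$ when $* \equiv 0 \Mod 4$, $* > 0$, and vanishes otherwise.

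Second, I would invoke the maps $\BDiff_\partial(D^{2n+1}) \to \BC(D^{2n})$ and $\BC(D^{2n+1}) \to \BDiff_\partial(D^{2n+1})$ from the paragraph preceding the statement. These arise from the delooped concordance fibre sequence $\BDiff_\partial(D^{d+1}) \to \BC(D^d) \to \BDiff_\partial(D^d)$ at $d = 2n$ and $d = 2n+1$ respectively, combined with Randal-Williams' rational $(2n-5)$-connectivity of $\BDiff_\partial(D^{2n})$; the long exact sequence of rational homotopy groups then renders both maps rationally $(2n-5)$-connected.

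Composing these with the rationally $(2n-4)$-connected zig-zag produces zig-zags \[\BDiff_\partial(D^{2n+1}) \lra \cdot \lla \Omega^{\infty+1}_0 \oK(\bfZ) \quad \text{and} \quad \BC(D^{2n+1}) \lra \cdot \lla \Omega^{\infty+1}_0 \oK(\bfZ),\] each rationally $\min(2n-5,\, 2n-4) = (2n-5)$-connected. Applying $\pi_*(-) \otimes \bfQ$ and inserting Borel's identification yields exactly the isomorphisms in degrees $* < 2n-5$ and the epimorphisms in degree $2n-5$ claimed in the statement. The argument is pure connectivity bookkeeping; there is no substantive obstacle once \cref{thm:mainthm} and Randal-Williams' bound are in hand, the only arithmetic point being that the rationalised \cref{thm:mainthm} is strictly more connected than the two fibre-sequence maps, so the composites inherit their lower connectivity.
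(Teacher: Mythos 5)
Your proposal is correct and follows essentially the same route the paper takes (the paper's proof consists precisely of the two sentences preceding the corollary): compose the rationalised zig-zag of Theorem~A with the two fibre-sequence maps and use Borel's theorem. One small imprecision worth noting: for the map $\BC(D^{2n+1})\to\BDiff_\partial(D^{2n+1})$, coming from the concordance fibre sequence at $d=2n+1$, the relevant input is the rational connectivity of the \emph{fibre} $\BDiff_\partial(D^{2n+2})$, not of $\BDiff_\partial(D^{2n})$ as you wrote; Randal-Williams' theorem applied to $D^{2(n+1)}$ gives rational $(2n-3)$-connectivity there, which is more than enough, so the conclusion is unaffected.
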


\begin{nrem}\ \begin{enumerate}\item The range in \cref{cor:odddiscandconc} is nearly optimal: by work of Watanabe \cite{WatanabeI} or as consequence of Weiss' results on topological Pontryagin classes \cite{WeissDalian}, the group $\pi_{2n-2}\BDiff_\partial(D^{2n+1})\otimes\bfQ$ is known to be nontrivial for many values of $n$ for which $K_{2n-1}(\bfZ)\otimes\bfQ$ vanishes by Borel's work mentioned above. 
\item \label{item:improve-with-KRW}A combination of the strengthening of \cref{thm:mainthm} mentioned earlier with recent work of Kupers and Randal-Williams \cite{KRWdiscs} improves the range of \cref{cor:odddiscandconc} by one degree from $2n-5$ to $2n-4$.
\item In the range captured by Igusa's stability result, i.e.\,up to approximately degree $2n/3$, \cref{cor:odddiscandconc} was previously known as a result of a classical computation due to Farrell and Hsiang \cite{FarrellHsiang} based on Waldhausen's approach to pseudoisotopy theory (of which the proof of \cref{cor:odddiscandconc} is independent).
\end{enumerate}
\end{nrem}

\subsection*{Homeomorphisms of Euclidean spaces} By an enhancement of a result due to Morlet (see e.g.\ \cite[Thm 4.4]{BurgheleaLashofSmoothing}), there are homotopy equivalences \[\begin{gathered}\BDiff_\partial(D^d)\simeq\Omega^d_0\TOP(d)/\oO(d)\quad\text{and}\\\BC(D^d)\simeq \Omega^d\hofib\Big(\TOP(d)/\oO(d)\ra \TOP(d+1)/\oO(d+1)\Big)\end{gathered}\] for $d\ge5$ that relate the groups of diffeomorphisms and concordances of a $d$-disc to the homotopy fibre $\TOP(d)/\oO(d)$ of the map $\BO(d)\ra\BTOP(d)$ that classifies the inclusion of the orthogonal group $\oO(d)$ into the topological group $\TOP(d)$ of homeomorphisms of $\bfR^d$, and to its stabilisation map $\TOP(d)/\oO(d)\ra \TOP(d+1)/\oO(d+1)$ induced by taking products with the real line. \cref{thm:mainthm} and \cref{cor:odddiscandconc} can thus be reformulated in terms of these equivalent spaces and result in particular in the following corollary, using the fact that the groups $\pi_*\TOP(d)/\oO(d)$ are finite for $*<d+2$ and $d\neq 4$ \cite[Essay V, 5.0]{KirbySiebenmann}.

\begin{bigcor}\label{thm:TopO}There exists an isomorphism
\[\pi_*\TOP(2n+1)/\oO(2n+1)\otimes \bfQ\cong \begin{cases}0&\mbox{for }*\le 2n+1\\K_{*+2n+2}(\bfZ)\otimes\bfQ&\mbox{for }2n+1<*< 4n-4\end{cases}\]
and an epimorphism $\pi_*\TOP(2n+1)/\oO(2n+1)\otimes\bfQ\ra K_{*+2n+2}(\bfZ)\otimes\bfQ$ in degree $4n-4$.
\end{bigcor}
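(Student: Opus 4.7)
The plan is to transfer the $K$-theoretic computation of \cref{cor:odddiscandconc} to the space $\TOP(2n+1)/\oO(2n+1)$ via the Morlet--Burghelea--Lashof equivalence
\[\BDiff_\partial(D^{2n+1})\simeq \Omega^{2n+1}_0\TOP(2n+1)/\oO(2n+1)\]
recalled in the excerpt, which is valid since $2n+1\ge 5$. No new input is needed beyond this equivalence, the cited Kirby--Siebenmann finiteness, and \cref{cor:odddiscandconc}; the argument will split into two ranges according to whether a degree lies below or above the loop count $2n+1$.

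First I would dispense with the vanishing range $*\le 2n+1$ without appealing to \cref{cor:odddiscandconc}, because these degrees sit at or below the bottom of what Morlet's loop-space equivalence can see. The quoted Kirby--Siebenmann statement that $\pi_*\TOP(d)/\oO(d)$ is finite for $*<d+2$ and $d\ne 4$, applied with $d=2n+1$, immediately yields finiteness (hence rational vanishing) for $*\le 2n+2$, which is stronger than required.

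For $2n+1<*<4n-4$, I would substitute $k=*-2n-1$, so that $0<k<2n-5$, and use the loop-space equivalence above to obtain
\[\pi_*\TOP(2n+1)/\oO(2n+1)\otimes\bfQ\cong \pi_k\BDiff_\partial(D^{2n+1})\otimes\bfQ.\]
Invoking \cref{cor:odddiscandconc} then identifies the right-hand side with $K_{k+1}(\bfZ)\otimes\bfQ$, which in the original indexing reads $K_{*-2n}(\bfZ)\otimes\bfQ$. The epimorphism at $*=4n-4$ is the translation under the same degree shift of the epimorphism at $k=2n-5$ in \cref{cor:odddiscandconc}.

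The hard part will really be nothing new: the corollary is essentially pure bookkeeping through an equivalence already stated. The only tiny subtlety is that the Morlet equivalence picks out the basepoint component $\Omega^{2n+1}_0$, so it does not by itself see $\pi_{2n+1}$ of the target; this boundary degree is absorbed into the Kirby--Siebenmann step instead of the \cref{cor:odddiscandconc} step, and no path-component ambiguity obstructs the loop-space identification in the remaining range $*>2n+1$.
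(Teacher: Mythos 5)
Your route is exactly the one the paper intends: Burghelea--Lashof/Morlet identifies $\pi_{*}(\TOP(2n+1)/\oO(2n+1))$ with $\pi_{*-2n-1}\BDiff_\partial(D^{2n+1})$ for $*>2n+1$, \cref{cor:odddiscandconc} then supplies the rational $K$-theory in that range, and Kirby--Siebenmann finiteness handles $*\le 2n+1$ directly (indeed up to $*\le 2n+2$). There is, however, a discrepancy you do not remark on: your arithmetic correctly produces $K_{*-2n}(\bfZ)\otimes\bfQ$, because $\pi_{*-2n-1}\BDiff_\partial(D^{2n+1})\otimes\bfQ\cong K_{(*-2n-1)+1}(\bfZ)\otimes\bfQ$, whereas the corollary as printed reads $K_{*+2n+2}(\bfZ)\otimes\bfQ$. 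These are genuinely different: the subscripts differ by $4n+2\equiv 2\bmod 4$, so by Borel's computation they sit in different residue classes mod $4$ and in a typical degree in the stated range exactly one of the two is $\bfQ$ while the other vanishes. You have in effect recovered the correct indexing and exposed a misprint in the displayed statement, but you present the derivation as if it established the printed formula without flagging the mismatch; a careful writeup should state explicitly that the subscript in the corollary must read $*-2n$.
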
 

\begin{nrem}As per item \ref{item:improve-with-KRW} of the previous remark, this range can be improved by one degree.
\end{nrem}

\subsection*{Idea of proof}Instead of sketching the proof of \cref{thm:mainthm}, we outline a strategy to achieve a seemingly different task: relating the $p$-local \emph{homology} of $\BC(D^{2n})$ to $K$-theory in a range of degrees. This should, however, convey the main ideas; the actual proof of \cref{thm:mainthm} uses a similar strategy to construct a zig-zag between $\BC(D^{2n})$ and $\Omega^{\infty+1}_0\oK(\bfZ)$ that consists of maps that are $p$-local homology isomorphisms in a range and then argues that the maps are actually $p$-locally highly connected. In the sketch that follows, we allow ourselves to be somewhat vague; full details shall be given in the body of this work.

The root of the proof of \cref{thm:mainthm} is to consider the odd-dimensional disc $D^{2n+1}$ as the $0$th member of a whole family of manifolds---the high-dimensional handlebodies
\[V_g\coloneq \natural^gD^{n+1}\times S^n\]
given as iterated boundary connected sums of $D^{n+1}\times S^n$.
Comparing the groups of diffeomorphisms $\Diff_{D^{2n}}(V_g)$ that pointwise fix a chosen disc $D^{2n}\subset \partial V_g$ in the boundary to the corresponding block diffeomorphism groups yields homotopy fibre sequences
\[\BlockDiff_{D^{2n}}(V_g)/\Diff_{D^{2n}}(V_g)\lra \BDiff_{D^{2n}}(V_g)\lra \BlockBDiff_{D^{2n}}(V_g),\] one for each $g$. Varying $g$, these fibre sequences are connected by stabilisation maps induced by extending (block) diffeomorphisms along the inclusion $V_g\subset V_{g+1}$ by the identity, and Morlet's lemma of disjunction ensures that the map between homotopy fibres \[\BlockDiff_{D^{2n}}(V_g)/\Diff_{D^{2n}}(V_g)\lra \BlockDiff_{D^{2n}}(V_{g+1})/\Diff_{D^{2n}}(V_{g+1})\] is highly connected. It is not hard to see that the space $\BC(D^{2n})$ of interest is equivalent to this fibre for $g=0$, so to access $\BC(D^{2n})$ in a range, we may as well study the homotopy fibre of the sequence obtained from the previous one by taking homotopy colimits,
\begin{equation}\label{equ:stablefibreseqeuenceVg}\BlockDiff_{D^{2n}}(V_\infty)/\Diff_{D^{2n}}(V_\infty)\lra \BDiff_{D^{2n}}(V_\infty)\lra \BlockBDiff_{D^{2n}}(V_\infty).\end{equation} By work of Botvinnik and Perlmutter \cite{BotvinnikPerlmutter}, the homology of $\BDiff_{D^{2n}}(V_\infty)$ has a surprisingly simple description in homotopy theoretical terms, so to compute the homology of the fibre of \eqref{equ:stablefibreseqeuenceVg} and hence that of $\BC(D^{2n})$ in a range, one might try to compute the homology of $\BlockBDiff_{D^{2n}}(V_\infty)$ and analyse the Serre spectral sequence of \eqref{equ:stablefibreseqeuenceVg}. This is essentially what we do, and it involves several steps of which some might be of independent interest:
\begin{enumerate}
\item In \cref{sect:surgerytheory}, we use surgery theory to express the space of block diffeomorphisms of a general manifold triad satisfying a $\pi$-$\pi$-condition $p$-locally for large primes in terms of its homotopy automorphisms covered by certain bundle data. A similar result in the rational setting which inspired ours but applies to another class of triads was obtained by Berglund and Madsen \cite{BerglundMadsen} (see also \cref{rem:BerglundMadsenComparison}).
\item \cref{section:mcg} serves to compute variants of the mapping class group $\pi_0\Diff_{D^{2n}}(V_g)$ up to extensions in terms of automorphisms groups of the integral homology of $V_g$.
\item In \cref{sect:hAut}, we calculate the $p$-local homotopy and homology groups of the delooped space of homotopy automorphisms $\BhAut_{D^{2n}}(V_g,W_{g,1})$ of $V_g$ that fix $D^{2n}$ and restrict to a homotopy automorphism of the complement of the boundary as a module over the group $\pi_0\hAut_{D^{2n}}(V_g,W_{g,1})$ in a range of degrees. This uses some pieces of the apparatus of rational homotopy theory, as well as an ad-hoc $p$-local generalisation we provide along the way.
\item The action on the $n$th homology group $\oH_n(V_g;\bfZ)\cong\bfZ^g$ induces a map of the form
\[\BlockBDiff_{D^{2n}}(V_\infty)\lra \BGL_\infty(\bfZ)^+\simeq\Omega^{\infty}_0 \oK(\bfZ),\] a variant of which we show in \cref{sect:proofmainthm} with the help of all previous steps to be a $p$-local homology isomorphism in a range of degrees.\end{enumerate}

\subsection*{Outlook}
In \cite{conc2}, we will take a different approach and study concordance spaces $\oC(M)$ without restriction on the $p$-torsion. As a byproduct, the setup of \cite{conc2} will also make apparent that the zig-zag of \cref{thm:mainthm} is compatible with the iterated stabilisation map $\oC(D^{2n})\ra\oC(D^{2n}\times [0,1]^2)\simeq \oC(D^{2n+2})$ and agrees up to equivalence with the zig-zag 
\[\BC(D^{2n})\ra \Omega^{\infty+1}_0\Wh^{\Diff}(*)=\Omega^{\infty}_0\hofib\big(\bfS\ra K(\bfS)\big)\ra\Omega_0^{\infty}\hofib\big(\bfS\ra K(\bfZ)\big)\la \Omega_0^{\infty+1}K(\bfZ)\]
known from Waldhausen's work on pseudoisotopy theory \cite{WaldhausenManifold} (see also \cref{sec:reformulation}). 

\subsection*{Acknowledgements}
My thanks go to Oscar Randal-Williams for several valuable discussions and to a referee for their constructive feedback. I was partially supported by O.\,Randal-Williams' Philip Leverhulme Prize from the Leverhulme Trust and the European Research Council (ERC) under the European Union’s Horizon 2020 research and innovation programme (grant agreement No. 756444).

\tableofcontents

\section{Preliminaries}\label{section:prelim}
We start off with a lemma on semi-simplicial actions and a short recollection on nilpotent spaces for later reference, followed by foundational material on various types automorphisms of manifolds with bundle data. Primarily, this serves us to set up a convenient theory of block automorphism spaces with tangential structures.

\subsection{Semi-simplicial monoids and their actions}
\label{sect:semisimplicial}The \emph{homotopy quotient} of a semi-simplicial set $X_\bullet$ semi-simplicially acted upon by a semi-simplicial monoid $M_\bullet$ from the right is the semi-simplicial space $X_\bullet\sslash M_\bullet$ whose space of $p$-simplices is defined as the bar-construction $\oB(X_p,M_p,*)$, with face maps induced by the face maps of $M_\bullet$ and $X_\bullet$. For $X_\bullet=*_\bullet$ the semi-simplicial point, i.e.\,$*_p$ a singleton for all $p$, we abbreviate $X_\bullet\sslash M_\bullet$ by $\oB M_\bullet$. The unique semi-simplicial map $X_\bullet\ra *_\bullet$ induces a natural map $X_\bullet\sslash M_\bullet\ra \oB M_\bullet$ which is well-known to geometrically realise to a quasi-fibration with fibre the realisation of $X_\bullet$ if $M_\bullet$ is a group-like simplicial monoid acting simplicially on a simplicial set $X_\bullet$. To explain a generalisation of this fact for \emph{semi}-simplicial $M_\bullet$ and $X_\bullet$, we denote the geometric realisation by $\lvert-\rvert$ and consider the natural zig-zag
\[ \lvert X_\bullet\rvert\times \lvert M_\bullet\rvert\lla\lvert X_\bullet\times M_\bullet\rvert\lra\lvert X_\bullet\rvert\] whose left map is induced by the projections and the right map by the action. If the underlying semi-simplicial sets of $M_\bullet$ and $X_\bullet$ \emph{admit degeneracies}, i.e.\,if they agree (as semi-simplicial sets) with the underlying semi-simplicial sets of simplicial sets, then the left arrow is an equivalence (see e.g.\,\cite[Thm\,7.2]{EbertRWsimplicial}), so a contractible choice of a homotopy inverse yields an action map $\mu\colon \lvert X_\bullet\rvert\times \lvert M_\bullet\rvert\lra \lvert X_\bullet\lvert$. In this situation, we say that \emph{$M_\bullet$ acts on $X_\bullet$ by equivalences} if $\mu(-,m)\colon \lvert X_\bullet\lvert\ra \lvert X_\bullet\lvert$ is an equivalence for all $m\in \lvert M_\bullet\rvert$. The following lemma shows that this is sufficient to conclude that the natural map $X_\bullet\sslash M_\bullet\ra \oB M_\bullet$ realises to a quasi-fibration.

\begin{lem}\label{lem:simplicialaction}For a semi-simplicial monoid $M_\bullet$ acting on a semi-simplicial set $X_\bullet$ such that $M_\bullet$ and $X_\bullet$ admit degeneracies and the action of $M_\bullet$ on $X_\bullet$ is by equivalences, the sequence
\[X_\bullet\lra X_\bullet \sslash M_\bullet\lra \oB M_\bullet\]
induces a quasi-fibration on geometric realisations.
\end{lem}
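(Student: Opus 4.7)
The plan is to adapt the standard proof for \emph{simplicial} monoids acting on simplicial sets to the semi-simplicial setting, making essential use of both hypotheses: that $M_\bullet$ and $X_\bullet$ admit degeneracies, and that $M_\bullet$ acts on $X_\bullet$ by equivalences. The underlying strategy is a skeletal filtration argument in the bar direction, combined with the Dold--Thom quasi-fibration criterion.

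First I would set up the natural filtrations by bar-length. For each $p$, the space $(X_\bullet\sslash M_\bullet)_p = \oB(X_p, M_p, *)$ comes with its tautological filtration by the number of bar factors, and similarly $(\oB M_\bullet)_p = \oB(*, M_p, *)$; since the outer face maps of $X_\bullet \sslash M_\bullet$ and $\oB M_\bullet$ preserve bar-length, these filtrations are compatible with the semi-simplicial structure in $p$, and realising yields compatible filtrations $F_n\lvert X_\bullet\sslash M_\bullet\rvert$ and $F_n\lvert\oB M_\bullet\rvert$. The stage $F_n\subset F_{n+1}$ is a pushout whose ``new piece'' in the total space is $\lvert X_\bullet\times M_\bullet^{n+1}\rvert\times\Delta^{n+1}$, attached along its boundary via the face maps of the bar construction, and analogously for the base. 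By \cite[Thm\,7.2]{EbertRWsimplicial} and the degeneracy hypothesis, the natural maps $\lvert X_\bullet\times M_\bullet^{n+1}\rvert \to \lvert X_\bullet\rvert\times \lvert M_\bullet\rvert^{n+1}$ and $\lvert M_\bullet^{n+1}\rvert\to\lvert M_\bullet\rvert^{n+1}$ are equivalences, so over each open $(n{+}1)$-cell of $F_{n+1}\lvert\oB M_\bullet\rvert$ the projection is, up to weak equivalence, the trivial $\lvert X_\bullet\rvert$-bundle over an open simplex.

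Next I would apply the Dold--Thom gluing criterion inductively to show that $F_n\lvert X_\bullet\sslash M_\bullet\rvert\to F_n\lvert\oB M_\bullet\rvert$ is a quasi-fibration with fibre $\lvert X_\bullet\rvert$ for every $n$. The inductive step amounts to verifying that the attaching map along $\partial\Delta^{n+1}$ preserves fibres up to weak equivalence. The face maps of the bar construction come in three flavours: the $0$th face involves the action $\lvert X_\bullet\rvert\times\lvert M_\bullet\rvert \to \lvert X_\bullet\rvert$; the inner faces multiply two adjacent $\lvert M_\bullet\rvert$-factors and act as the identity on the $\lvert X_\bullet\rvert$-factor; and the last face forgets a $\lvert M_\bullet\rvert$-factor. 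The assumption that $M_\bullet$ acts on $X_\bullet$ by equivalences is precisely what is needed to ensure that the $0$th face is fibrewise a weak equivalence, while the other faces are tautologically so. Passing to the colimit in $n$ then yields the desired quasi-fibration $\lvert X_\bullet\sslash M_\bullet\rvert \to \lvert\oB M_\bullet\rvert$.

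The main technical obstacle I anticipate is the zig-zag nature of the identification $\lvert X_\bullet\times M_\bullet^{n+1}\rvert\simeq\lvert X_\bullet\rvert\times\lvert M_\bullet\rvert^{n+1}$: these are not strict isomorphisms but equivalences obtained via a contractible choice, so some care is needed to ensure that the hypotheses of the Dold--Thom criterion really apply. The cleanest way around this, I think, is to phrase the inductive step entirely inside $\lvert X_\bullet\sslash M_\bullet\rvert$, using the degeneracy hypothesis only to identify the fibre of the projection over a chosen point in each open cell of $F_{n+1}\setminus F_n$, and using the ``acts by equivalences'' hypothesis only to compare these fibres across adjacent cells, rather than attempting to identify the whole total space with a bar construction on the realisations.
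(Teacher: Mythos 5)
Your approach is essentially the same as the paper's, just unwound to one further level of detail: the paper realises the semi-simplicial direction first and then appeals to Segal's criterion for simplicial spaces (\cite[Thm\,2.12, Lem\,2.11]{EbertRWsimplicial}), which is itself proved by exactly the skeletal-filtration-plus-Dold--Thom argument you describe, and the two hypotheses enter in exactly the places you identify (degeneracies to get $\lvert X_\bullet\times M_\bullet\rvert\simeq\lvert X_\bullet\rvert\times\lvert M_\bullet\rvert$ via \cite[Thm\,7.2]{EbertRWsimplicial}, and ``acts by equivalences'' to handle the $d_0$ face). The one economy the paper gains from the citation is that the simplification in \cite[Lem\,2.11]{EbertRWsimplicial} lets one check only the squares for $d_0$ and $d_p$ rather than all face maps, and it sidesteps the zig-zag bookkeeping you rightly flag as the main technical nuisance by working with homotopy cartesian squares rather than on-the-nose identifications of fibres.
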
  

\begin{rem}\label{rem:simplicial-remark}\ 
\begin{enumerate}
\item In all situations we encounter, the condition that $M_\bullet$ and $X_\bullet$ admit degeneracies is ensured by them being Kan (every semi-simplicial Kan complex admits degeneracies  \cite{KanSemisimplicial}) and the condition that $M_\bullet$ acts on $X_\bullet$ by equivalences by $M_\bullet$ being group-like, i.e.\,the monoid of components $\pi_0\lvert M_\bullet\rvert$ having inverses. Note that \cref{lem:simplicialaction} neither requires the degeneracies of $M_\bullet$ to be compatible with the monoid structure nor those of $X_\bullet$ with the action. 
\item The condition that $M_\bullet$ and $X_\bullet$ admit degeneracies is crucial, even if $M_\bullet=G_\bullet$ is a simplicial group. For an instructive example, consider the semi-simplicial set $X_\bullet=G_\bullet^{\le 0}$ which agrees with $G_0$ in degree $0$ and is empty otherwise, semi-simplicially acted upon by $G_\bullet$ via right translations. In this case, the realisation $\lvert X_\bullet \sslash G_\bullet\rvert$ is contractible, but $\rvert G_\bullet^{\le 0}\rvert\simeq G_0$ is rarely equivalent to $\Omega \lvert\oB G_\bullet\rvert\simeq  \lvert G_\bullet\rvert$.
\item\label{item:simplicial-remark-final}Under the assumption of \cref{lem:simplicialaction}, the long exact sequence induced by the quasi-fibration in particular yields a canonical bijection $\pi_0\lvert X_\bullet\rvert / \pi_0\lvert M_\bullet\rvert\cong \pi_0\lvert X_\bullet \sslash M_\bullet\rvert$.
\end{enumerate}
\end{rem}
\begin{proof}[Proof of \cref{lem:simplicialaction}]
The sequence in question is the geometric realisation of a sequence
\begin{equation}\label{equ:semisimplicialsequence}X_{\bullet}\lra \oB_{\smallsquare}(X_\bullet,M_\bullet,*)\lra \oB_{\smallsquare}M_\bullet\end{equation} of simplicial semi-simplicial sets, where the simplicial (bar-)direction is indicated by the square $\smallsquare$ and the semi-simplicial direction by the bullet $\bullet$; the fibre $X_\bullet$ is constant in the $\smallsquare$-direction. Since the realisation of a simplicial set is canonically equivalent to the realisation of its underlying semi-simplicial set \cite[Lem.\,1.7]{EbertRWsimplicial} and the realisation of a bi-semi-simplicial sets is independent of which direction one realises first \cite[p.\,2106]{EbertRWsimplicial}, we may rely on a result of Segal \cite[Thm\,2.12]{EbertRWsimplicial} and the second part of its simplification from \cite[Lem.\,2.11]{EbertRWsimplicial} to reduce the claim to showing that the commutative squares 
\[
\begin{tikzcd}
\lvert\oB_{p}(X_\bullet,M_\bullet,*)\rvert\arrow[r,"d_p"]\dar&\lvert\oB_{p-1}(X_\bullet,M_\bullet,*)\rvert\dar\\
\lvert\oB_{p}M_\bullet\rvert\arrow[r,"d_p"]& \lvert\oB_{p-1}M_\bullet\rvert
\end{tikzcd}
\ \ and\ \ 
\begin{tikzcd}
\lvert\oB_{1}(X_\bullet,M_\bullet,*)\rvert\arrow[r,"d_0"]\dar&\lvert\oB_{0}(X_\bullet,M_\bullet,*)\rvert\dar\\
\lvert\oB_{1}M_\bullet\rvert\arrow[r,"d_0"]& \lvert\oB_{0}M_\bullet\rvert
\end{tikzcd}
\]
obtained from the simplicial structure of $\oB_{\smallsquare}(X_\bullet,M_\bullet,*)\ra \oB_{\smallsquare}(M_\bullet)$ by realising the $\bullet$-direction are homotopy cartesian for $p\ge0$. For the left square, this follows directly from the definition of the bar-construction together with the above mentioned fact that the realisation of the product of two semi-simplicial sets that admit degeneracies is canonically equivalent to the product of their realisations. Spelling out the definition of the bar-construction, the right hand square is given as 
\[
\begin{tikzcd}
\lvert X_\bullet\times M_\bullet\rvert\arrow[d,swap,"\lvert (\pr_2)_\bullet\rvert"]\arrow[r,"\lvert \mu_\bullet\rvert"]\dar&\lvert X_\bullet \rvert\dar\\
\lvert M_\bullet\rvert\arrow[r]& *,
\end{tikzcd}
\]
involving the action $\mu_\bullet$ and the projection $(\pr_2)_\bullet$ on the second coordinate, so we are left to show that the \emph{shear map} $(\lvert \mu_\bullet\rvert, \lvert (\pr_2)_\bullet\rvert)\colon \lvert X_\bullet \times M_\bullet\rvert\ra \lvert X_\bullet\rvert\times \lvert M_\bullet\rvert$
is an equivalence. This map fits into a commutative triangle
\[
\begin{tikzcd}
\lvert X_\bullet \times M_\bullet\rvert\arrow[rr,"{(\lvert \mu_\bullet\rvert, \lvert (\pr_2)_\bullet\rvert)}"]\arrow[dr,bend right = 15, "{\lvert (\pr_2)_\bullet\rvert}",swap]&&\lvert X_\bullet\rvert\times \lvert M_\bullet\rvert\arrow[dl, bend left = 15, "{\lvert (\pr_2)_\bullet\rvert}"]\\
&\lvert M_\bullet\rvert&
\end{tikzcd}
\]
whose map on diagonal homotopy fibres at a point $m\in \lvert M_\bullet\rvert$ agrees up to equivalence with the action map $\mu(-,m)\colon\lvert X_\bullet\rvert\ra \lvert X_\bullet\rvert$. The latter is an equivalence by assumption, so the shear map is an equivalence and the claim follows.
\end{proof}

\subsection{Nilpotent spaces}\label{section:nilpotency}A space is \emph{nilpotent} if it is path connected and its fundamental group is nilpotent and acts nilpotently on all higher homotopy groups. Such spaces have an unambiguous $p$-localisation at a prime $p$, which on homology and homotopy groups (including the fundamental group) has the expected effect of $p$-localisation in the algebraic sense \cite[Thm 6.1.2]{MayPonto}. Localisations are defined in terms of a universal property \cite[Def.\,5.2.3]{MayPonto}, which ensures that they are unique and functorial up to homotopy. A map between nilpotent spaces is \emph{$p$-locally $k$-connected} for a prime $p$ and $k\ge1$ if the induced map on $p$-localisations is $k$-connected in the usual sense.

\begin{lem}\label{lem:plocalconnectivity}For a map $X\ra Y$ between nilpotent spaces, a prime $p\ge2$, and $k\ge1$, the following statements are equivalent.
\begin{enumerate}
\item The map $X\ra Y$ is $p$-locally $k$-connected.
\item The induced map on $p$-localised homotopy groups $(\pi_*X)_{(p)}\to(\pi_*Y)_{(p)}$ is an isomorphism for $*<k$ and surjective for $*=k$.
\item The induced map on $p$-local homology groups $\oH_*(X;\bfZ_{(p)})\to \oH_*(Y;\bfZ_{(p)})$ is an isomorphism for $*< k$ and surjective for $*=k$.
\end{enumerate}
\end{lem}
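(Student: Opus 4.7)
The plan is to reduce the lemma to two foundational facts about $p$-localization of nilpotent spaces collected in \cite{MayPonto}: first, the localization map $X \to X_{(p)}$ induces the algebraic $p$-localization on each homotopy group (in the sense of nilpotent group localization for $\pi_1$), so in particular $\pi_*(X_{(p)}) \cong \pi_*(X) \otimes \bfZ_{(p)}$; and second, the same map is a $\bfZ_{(p)}$-homology equivalence, giving $\oH_*(X_{(p)}; \bfZ) \cong \oH_*(X; \bfZ_{(p)})$. Both statements apply equally to $Y$ and compatibly to $f_{(p)}$.

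The equivalence (i) $\Leftrightarrow$ (ii) then holds essentially by definition: (i) asserts that $f_{(p)}$ is $k$-connected in the classical sense of inducing isomorphisms on homotopy groups in degrees $< k$ and a surjection in degree $k$, and the first foundational fact identifies this condition verbatim with (ii).

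For (i) $\Rightarrow$ (iii), I would use that a $k$-connected map between nilpotent spaces induces isomorphisms on integral homology in degrees $< k$ and a surjection in degree $k$. This follows from the relative Hurewicz theorem for nilpotent pairs applied to the mapping cylinder of $f_{(p)}$, whose relative homotopy groups vanish in the requisite range. Combined with the second foundational fact, the conclusion for $f_{(p)}$ translates into (iii).

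The main obstacle is the converse (iii) $\Rightarrow$ (i), which amounts to a $p$-local nilpotent Whitehead theorem in a range. The plan is to translate (iii) via the second foundational fact into the statement that $f_{(p)} \colon X_{(p)} \to Y_{(p)}$ induces an isomorphism on $\oH_*(-;\bfZ)$ for $* < k$ and a surjection for $* = k$, and then to transfer this to the corresponding statement on homotopy groups by induction along a principal Postnikov refinement of $X_{(p)}$ and $Y_{(p)}$. Nilpotency of $X_{(p)}$ and $Y_{(p)}$ ensures such refinements exist with abelian Eilenberg--MacLane fibres $K(A_j, n_j)$, and at each stage the Serre spectral sequence converts the homological iso/surj data into the corresponding data on $\pi_*$. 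This is precisely the argument underlying the nilpotent Whitehead theorem in \cite{MayPonto}, adapted only to track the range of degrees rather than collapsing to an honest weak equivalence.
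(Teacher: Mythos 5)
Your proof is correct but takes a more laborious route than the paper for the equivalence with (iii). For (i) $\Leftrightarrow$ (ii) you and the paper agree: both read this off from the fact that $p$-localisation of nilpotent spaces is computed degreewise on homotopy groups. The divergence is in relating (ii) and (iii). The paper does both directions at once by citing the relative Hurewicz theorem for nilpotent spaces \cite[Cor.\,3.4]{HiltonRoitberg}, which is already an \emph{if-and-only-if} statement between ranged homotopy-group and homology-group conditions for a nilpotent pair, applied to the $p$-localised map; the only separate case is $k=1$, for which the paper invokes that a homomorphism of nilpotent groups is surjective iff it is surjective on abelianisations. You instead split the argument: (i) $\Rightarrow$ (iii) via relative Hurewicz on the mapping cylinder of $f_{(p)}$, and then (iii) $\Rightarrow$ (i) by an independent induction up a principal Postnikov refinement with the Serre spectral sequence. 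This second half essentially re-derives the ranged nilpotent Whitehead theorem from scratch, which is more machinery than needed once one notices the cited Hurewicz theorem is bidirectional. Your plan would work, but be careful to treat the bottom degree $k=1$ explicitly (the paper isolates it precisely because the standard relative Hurewicz statement begins at $k\ge 2$); your Postnikov induction covers it in principle via the abelian subquotient filtration of $\pi_1$, but you should say so rather than leave it implicit.
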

\begin{proof}The above mentioned fact that $p$-localisation of nilpotent spaces commutes with taking homotopy groups shows that the first two items are equivalent. The equivalence between the second two follows for $k=1$ from the well-known fact that a morphism $G\to H$ between nilpotent groups is surjective if and only if it is surjective on abelianisations which one sees as follows: writing $\Gamma^1(G)=G$ and $\Gamma^n(G)=[\Gamma^{n-1}(G),G]$ for the lower central series, there is a natural epimorphism $\otimes^n_{\bfZ}G^{\mathrm{ab}}\ra \Gamma^{n}(G)/\Gamma^{n+1}(G)$ induced by taking left-normed commutators (see e.g.\,\cite[Thm 3.1]{Warfield}), so one may prove the claim by an induction on the nilpotence degree. For $k\ge2$, the equivalence between the second two items is a consequence of the relative Hurewicz theorem for nilpotent spaces (see e.g.\,\cite[Cor.\,3.4]{HiltonRoitberg}) applied to the $p$-localisation of the map in question.\end{proof}

At several points in this work, we will make use of the fact that nilpotent spaces behave well with respect to taking homotopy fibres (see e.g.\,\cite[Prop.\,4.4.1]{MayPonto} for a proof).

\begin{lem}\label{lem:nilpotentfibre}Let $\pi\colon E\ra B$ be a fibration, $e\in E$ a point, and $E_e\subset E$ and $F_e\subset \pi ^{-1}(\pi(e))$ the respective path components containing $e$. If $E_e$ is nilpotent, then $F_e$ is nilpotent as well.
\end{lem}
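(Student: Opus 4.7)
The strategy is to verify the three defining properties of a nilpotent space for $F_e$: path-connectedness (immediate, since $F_e$ is a path component by definition), nilpotency of $\pi_1(F_e)$, and nilpotent action of $\pi_1(F_e)$ on $\pi_n(F_e)$ for every $n \geq 2$. Writing $B'$ for the path component of $B$ containing $\pi(e)$, the fibration $\pi$ restricts to a fibration $F_e \to E_e \to B'$, and its long exact sequence of homotopy groups will be the main tool.

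For each $n \geq 1$, this long exact sequence gives a short exact sequence of groups
\[
0 \longrightarrow \mathrm{im}(\partial) \longrightarrow \pi_n(F_e) \longrightarrow \mathrm{im}(i_*) \longrightarrow 0,
\]
where $\partial\colon \pi_{n+1}(B') \to \pi_n(F_e)$ is the connecting homomorphism and $i_*\colon \pi_n(F_e) \to \pi_n(E_e)$ is induced by inclusion; for $n = 1$ we view $\pi_1(F_e)$ as acting on itself by conjugation. The crucial input, standard for any Hurewicz fibration, is that the intrinsic action of $\pi_1(F_e)$ on $\pi_n(F_e)$ extends along $i_*$ to an action of $\pi_1(E_e)$ obtained by path lifting, making both $i_*$ and $\partial$ equivariant and such that the $\pi_1(E_e)$-action on $\pi_{n+1}(B')$ factors through the projection $\pi_1(E_e) \to \pi_1(B')$. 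Consequently, on the subgroup $\mathrm{im}(\partial)$ the extended action factors through $\pi_1(B')$, while on the quotient $\mathrm{im}(i_*) \subseteq \pi_n(E_e)$ it is the restriction of the given $\pi_1(E_e)$-action on $\pi_n(E_e)$.

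Restricting back to $\pi_1(F_e)$ along $i_*$, we conclude: on $\mathrm{im}(\partial)$, the $\pi_1(F_e)$-action factors through the composite $\pi_1(F_e) \to \pi_1(E_e) \to \pi_1(B')$, which vanishes by exactness, so this action is trivial; on $\mathrm{im}(i_*)$, the $\pi_1(F_e)$-action is the restriction of the $\pi_1(E_e)$-action on $\pi_n(E_e)$, which is nilpotent by the hypothesis that $E_e$ is nilpotent, so the restriction is nilpotent as well. The standard closure of nilpotent actions under extensions---if a group acts nilpotently on both the kernel and the quotient of a short exact sequence of groups, then it acts nilpotently on the middle---now yields the nilpotency of the $\pi_1(F_e)$-action on $\pi_n(F_e)$ for each $n \geq 1$. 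Applied with $n = 1$, this simultaneously exhibits $\pi_1(F_e)$ as a central extension (the subgroup $\mathrm{im}(\partial)$ is central by triviality of its conjugation action) of a subgroup of the nilpotent group $\pi_1(E_e)$, and hence as nilpotent. The only technical point requiring justification is the extension of the intrinsic fibre action to an action of $\pi_1(E_e)$ with the compatibilities above, which is a classical consequence of the path-lifting property.
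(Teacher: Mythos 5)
The paper gives no proof of this lemma; it refers to Proposition 4.4.1 of May--Ponto. Your argument is correct and is essentially the standard one found there: it uses the classical extension of the intrinsic $\pi_1(F_e)$-action on $\pi_n(F_e)$ to a $\pi_1(E_e)$-action compatible with the long exact sequence, observes that the $\pi_1(F_e)$-action on $\mathrm{im}(\partial)$ is actually \emph{trivial} (not merely nilpotent) because the composite $\pi_1(F_e)\to\pi_1(E_e)\to\pi_1(B')$ vanishes---which is precisely why no nilpotency hypothesis on $B$ is needed---and then closes up via the stability of nilpotent actions under extensions, together with the central-extension observation for the $n=1$ case.
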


\subsection{Block spaces}\label{sect:blockspaces}
As is customary, we denote the standard $p$-simplex by $\Delta^p\subset\bfR^{p+1}$ and identify its faces $\sigma\colon \Delta^q\ra\Delta^p$ with their images. A \emph{$p$-block space} is a space $X$ together with a map $\pi\colon X\ra\Delta^p$. For a $q$-face $\sigma\subset \Delta^p$, the preimage $X_\sigma\coloneq\pi^{-1}(\sigma)$ becomes a $q$-block space by pulling back $\pi$ along $\sigma$. A \emph{block map} between block $p$-spaces is a map $f\colon X\ra Y$ between underlying spaces such that $f(X_\sigma)\subset Y_{\sigma}$ for all faces $\sigma\subset \Delta^p$ and a \emph{block homotopy equivalence} is a block map  $f\colon X\ra Y$ such that the induced map $f_\sigma\colon X_\sigma\ra Y_\sigma$ is a homotopy equivalence for all faces $\sigma\subset\Delta^p$. Spaces of the form $\Delta^p\times M$ are implicitly considered as $p$-block spaces via the projection. For $i=0,\ldots,p$, the map
\[\map{c_{i}}{[0,1]\times \Delta^{p-1}}{\Delta^p}{(s,t_0,\ldots,t_{p-1})}{((1-s)t_0,\ldots,(1-s)t_{i-1},s,(1-s)t_{i},\ldots,(1-s)t_{p-1} )}\]
induces by restriction for $0<\epsilon\le 1$ a diffeomorphism of manifolds with corners
\[c_{i,\epsilon}\colon [0,\varepsilon)\times \Delta^{p-1}\lra  c_i([0,\epsilon)\times \Delta^{p-1})\eqcolon\Delta^p_{i,\epsilon}\] onto an open neighborhood $\Delta^p_{i,\epsilon}$ of the $i$th codimension $1$ face $\Delta^p_i\subset\Delta^p$. A block map $f\colon \Delta^p\times M\ra \Delta^p\times N$ for spaces $M$ and $N$ is \emph{collared} if there exists an $\varepsilon>0$ such that\begin{enumerate}
\item $f(\Delta^p_{i,\epsilon}\times M)\subset \Delta^p_{i,\varepsilon}\times N$ and 
\item $(c_{i,\epsilon}^{-1}\times \id_N)\circ f|_{\Delta^p_{i,\epsilon}\times M}\circ (c_{i,\epsilon}\times \id_M)=f_{\Delta_i^p}\times \id_{[0,\epsilon)}$
\end{enumerate}
are satisfied for $i=0,\ldots,p$.

\subsection{Diffeomorphisms}\label{section:diffeomorphisms}
For a compact smooth $d$-manifold $W$ and two compact submanifolds $M,N\subset W$, we denote by $\BlockDiff_M(W,N)_\bullet$ the semi-simplicial group of \emph{block diffeomorphisms} whose $p$-simplices consists of all diffeomorphisms of $\Delta^p\times W$ that are collared block maps, fix $\Delta^p\times N$ setwise, and fix a neighborhood of $\Delta^p\times M$ pointwise. The semi-simplicial structure is induced by restricting diffeomorphisms of $\Delta^p\times M$ to $\sigma\times M$ for faces $\sigma\subset \Delta^p$. Whenever one of the submanifolds is empty, we omit it from the notation and in the case $M=\partial W$, we write $\BlockDiff_\partial(W,N)$ instead of $\BlockDiff_M(W,N)$. Making use of the collaring condition, one shows that $\BlockDiff_M(W,N)_\bullet$ satisfies the Kan property (see \cite[p.\,58-59]{HLLRW} for a proof in the case $M=N=\varnothing$; the general case follows in the same way\footnote{In \cite[App.\,A §3 a]{BLR}, the authors attempt to construct degeneracy maps for certain semi-simplicial sets of \emph{block embeddings}, which would in particular enhance $\BlockDiff_\partial(W)_\bullet$ to a simplicial group and thus imply that it satisfies the Kan property as every simplicial group does. However, the argument in \cite{BLR} is flawed: on two of the faces, the proposed degeneracy maps violate the collaring condition described on p.\,116 loc.\,cit. The more laborious argument in \cite{HLLRW} appears to be correct.}). The semi-simplicial subgroup of \emph{diffeomorphisms} \[\Diff_M(W,N)_\bullet\subset \BlockDiff_M(W,N)_\bullet\] is defined by requiring the diffeomorphisms of $\Delta^p\times M$ to commute with the projection to the simplex $\Delta^p$ instead of just preserving its faces. This semi-simplicial subgroup agrees with the (collared and smooth) singular set of the topological group $\Diff_M(W,N)$ of diffeomorphisms by which we mean the set of $0$-simplices $\Diff_M(W,N)_0$ equipped with the smooth Whitney topology, so there is a canonical weak equivalence $\lvert\Diff_M(W,N)_\bullet\rvert\ra \Diff_M(W,N)$ and we shall not distinguish between these spaces.

\subsection{Homotopy automorphisms}
\label{sect:hAutPrelim}The $p$-simplices of the semi-simplicial monoid of \emph{block homotopy automorphisms} $\BlockhAut_M(W,N)_\bullet$ are the block homotopy automorphisms of $\Delta^p\times W$ that fix $\Delta^p\times M$ pointwise and restrict to homotopy automorphisms of $\Delta^p\times N$. Unlike for $\BlockDiff_M(W,N)_\bullet$, it is straight-forward to see that $\BlockhAut_M(W,N)_\bullet$ is Kan: a map from the semi-simplicial horn $(\Lambda_i^p)_\bullet$ is represented by a homotopy equivalence $\psi\colon \Lambda_i^p\times W\ra \Lambda_j^p\times W$ and a lift to a $p$-simplex $\Delta^p\times W\ra \Delta^p\times W$ is given by $(\varphi_i\times \id_W)\circ (\id_{[0,1]}\times\psi)\circ (\varphi_i^{-1}\times \id_W)$, where $\varphi_i\colon [0,1]\times \Lambda_i^p\ra \Delta^p$ is any homeomorphism that extends the inclusion on $ \{0\}\times \Lambda_i^p\subset \Delta^p$ and restricts to a homeomorphism from $[0,1]\times \partial\Lambda_i^p\cup  \{1\}\times \Lambda_i^p$ onto the $i$th face $\Delta^p_i\subset \Delta^p$. As for diffeomorphisms, insisting that the homotopy equivalences of $\Delta^p\times M$ be over $\Delta^p$ defines a sub semi-simplicial monoid of \emph{homotopy automorphisms} 
\begin{equation}\label{equ:inclusionblockhaut}
\hAut_M(W,N)_\bullet\subset \BlockhAut_M(W,N)_\bullet,
\end{equation}
which agrees with the singular set of the space $\hAut_M(W,N)$ obtained by equipping the set of homotopy equivalences $\hAut_M(W,N)_0$ with the compact open topology, so also $\lvert \hAut_M(W,N)_\bullet\rvert$ and $\hAut_M(W,N)$ are canonically equivalent. An aspect which distinguishes the situation for homotopy automorphisms from that for diffeomorphisms is that the inclusion \eqref{equ:inclusionblockhaut} of Kan complexes induces an equivalence on geometric realisation, which one can see from the combinatorial description of their homotopy groups together with the contractibility of $\hAut_{\partial \Delta^p}(\Delta^p)$.

\subsection{Bundle maps, unstably}
\label{sect:unstablebundlemaps}
A \emph{bundle map} between two vector bundles $\xi\ra X$ and $\nu\ra Y$ over CW complexes $X$ and $Y$ is a commutative square of the form
\[
\begin{tikzcd}
\xi\rar{\phi}\dar&\nu\dar\\
X\rar{\bar{\phi}} &Y
\end{tikzcd}
\]
whose induced maps on vertical fibres are linear isomorphisms. Of course the underlying map of spaces $\bar{\phi}$ can be recovered from $\phi$, so we often omit it. Given a subcomplex $A\subset X$ and a bundle map $\ell_0\colon \xi|_{A}\ra \nu$ defined on the restriction of $\xi$ to $A$, the semi-simplicial set of \emph{block bundle maps} $\BlockBun_A(\xi,\nu;\ell_0)_\bullet$ has as its $p$-simplices the bundle maps $\Delta^p\times \xi\ra \Delta^p\times \nu$ that agree with $\id_{\Delta^p}\times \ell_0$ on $\Delta^p\times \xi|_{A}$ and whose underlying map $\Delta^p\times X\ra \Delta^p\times Y$ is a block map. As before, the semi-simplicial structure is induced by restriction to subspaces $\sigma\times X$ for faces $\sigma\subset\Delta^p$. Insisting that the underlying map between base spaces be over $\Delta^p$ defines the sub semi-simplicial set \[\Bun_A(\xi,\nu;\ell_0)_\bullet\subset \BlockBun_A(\xi,\nu;\ell_0)_\bullet\] of \emph{bundle maps}, which agrees with the singular set of the space $\Bun_A(\xi,\nu;\ell_0)$ obtained by equipping the set $\Bun_A(\xi,\nu;\ell_0)_0$ of bundle maps $\xi\ra\nu$ relative to $\ell_0$ with the compact-open topology. 
If $\xi=\nu$ and $\ell_0=\inc$ is the inclusion, then the semi-simplicial sets of (block) bundle maps $\BlockBun_A(\xi,\xi;\inc)_\bullet$ and $\Bun_A(\xi,\xi;\inc)_\bullet$ are semi-simplicial monoids under composition, and they act by precomposition on $\BlockBun_A(\xi,\nu;\ell_0)_\bullet$ respectively $\Bun_A(\xi,\nu;\ell_0)_\bullet$ for any bundle $\nu$ and bundle map $\ell_0\colon\xi|_{A}\ra\nu$. For a subcomplex $C\subset X$, we denote by \[\BlockhAut_A(\xi,C)_\bullet\subset \BlockBun_A(\xi,\xi;\inc)_\bullet\] the submonoid of those block bundle maps whose underlying selfmap of $\Delta^p\times X$ is a homotopy equivalence that restricts to an equivalence of $\Delta^p\times C$. The submonoid \[\hAut_A(\xi,C)_\bullet\subset \Bun_A(\xi;\xi,\inc)_\bullet\] is defined analogously. 

\subsubsection{Tangential bundle maps}
Introducing yet another variant of bundle maps, we define the semi-simplicial set of \emph{tangential block bundle maps} $\BlockBun_A(\xi,\nu;\ell_0)^\tau_\bullet$ as follows: writing $\tau_{M}$ for the tangent bundle of a manifold $M$ and $\varepsilon$ for the trivial line bundle, the $p$-simplices of $\BlockBun_A(\xi,\nu;\ell_0)^\tau_\bullet$ are the bundle maps $\varphi\colon\tau_{\Delta^p}\times \xi\ra \tau_{\Delta^p}\times \nu$ that agree with $\id_{\tau_{\Delta^p}}\times \ell_0$ on $\tau_{\Delta^p}\times \xi|_A$, satisfy $\varphi(\tau_{\Delta_i^p}\times \xi)\subset\tau_{\Delta_i^p}\times\nu$ for $0\le i\le p$, and make the diagram
\begin{equation}\label{equ:collarsquare}
\begin{tikzcd}[column sep=1.5cm]
\tau_{\Delta^p}|_{\Delta^p_i}\times \xi\arrow[r,"\varphi"]\arrow[d]& {\tau_{\Delta^p}}|_{\Delta^p_i}\times \nu\arrow[d]\\
(\tau_{\Delta_i^p}\oplus \varepsilon)\times \xi \arrow[r]&(\tau_{\Delta_i^p}\oplus \varepsilon)\times \nu
\end{tikzcd}
\end{equation}
commute, where the bottom horizontal map is given by the restriction of $\varphi$ on $\tau_{\Delta^p_i}\times \xi$ and the identity on $\varepsilon$, and the vertical maps are induced by the canonical trivialisation of $\tau_{[0,1]}$ and the derivative of the map $c_i$ defined in \cref{sect:blockspaces}. Requiring the underlying map $\Delta^p\times X\ra\Delta^p\times Y$ to be over $\Delta^p$ defines the sub semi-simplicial set \begin{equation}\label{equ:tangentialbundlemaps}\Bun_A(\xi,\nu;\ell_0)^\tau_\bullet\subset \BlockBun_A(\xi,\nu;\ell_0)^\tau_\bullet\end{equation} of \emph{tangential bundle maps}. As before, we have a chain of semi-simplicial monoids \[\hAut_A(\xi,C)^\tau_\bullet\subset \BlockhAut_A(\xi,C)^\tau_\bullet\subset \BlockBun_A(\xi,\xi;\inc)^\tau_\bullet\] which are defined in the same way as for non-tangential (block) bundle maps. Note that there is a canonical map 
\begin{equation}\label{equ:extedingtanegntialbundlemaps}\Bun_A(\xi,\nu;\ell_0)_\bullet\lra \Bun_A(\xi,\nu;\ell_0)_\bullet^\tau\end{equation}
 given by extending a bundle map $\Delta^p\times \xi\ra \Delta^p\times\nu$ over $\Delta^p$ to $\tau_{\Delta^p}\times \xi\ra \tau_{\Delta^p}\times\nu$ by the identity. This map is not an equivalence, but we shall see in the next paragraph that it becomes one after \emph{stabilisation}.
 
 \begin{rem}Note that the map \eqref{equ:extedingtanegntialbundlemaps} does \emph{not} extend to a map
$ \BlockBun_A(\xi,\nu;\ell_0)_\bullet\ra \BlockBun_A(\xi,\nu;\ell_0)_\bullet^\tau$
  in an obvious way. This is because, for a bundle map $\phi\colon \Delta^p\times \xi\ra \Delta^p\times\nu$ covering a block map, the map $\tau_{\Delta^p}\times \xi\ra \tau_{\Delta^p}\times\nu$ that maps $(x,y)$ to $(x,\phi(\overline{x},y))$ is \emph{not} a bundle map unless $\phi$ commutes with the projection on $\Delta^p$. Here $\overline{x}\in \Delta^p$ is the underlying point of $x\in\tau_{\Delta^p}$.
 \end{rem}
 
\subsection{Bundle maps, stably}\label{sect:stablebundlemaps}
A \emph{stable vector bundle} is a sequence of vector bundles $\psi=\{\psi_k\ra B_k\}_{k\ge l}$ for some $l\ge0$, where $\psi_k$ is $k$-dimensional, together with \emph{structure maps} $\psi_k\oplus \varepsilon \ra \psi_{k+1}$ for $k\ge l$, covering maps of the form $B_{k}\ra B_{k+1}$. Given a $d$-dimensional vector bundle $\xi$, its \emph{stabilisation} is the stable vector bundle $\xi^s$ with $\xi^s_{d+k}=\xi\oplus \epsilon^k$ and the identity as structure maps. For a $d$-dimensional vector bundle $\xi\ra X$, a stable vector bundle $\psi$, and a bundle map $\ell_0\colon \xi|_{A}\oplus \varepsilon^k\ra \psi_{d+k}$ for some $k$, the semi-simplicial set of \emph{stable bundle maps} is the colimit \[\Bun_A(\xi^s,\psi;\ell_0)_\bullet\coloneq \colim_{m\ge k}\Bun_A(\xi\oplus \varepsilon^m,\psi_{d+m};\ell_0)_\bullet\] over the stabilisation maps \[\Bun_A(\xi\oplus \varepsilon^m,\psi_{d+m};\ell_0)_\bullet\lra \Bun_A(\xi\oplus \varepsilon^{m+1},\psi_{d+m+1};\ell_0)_\bullet\] given by adding a trivial line bundle followed by postcomposition with the structure map $\psi_{d+m}\oplus\varepsilon\ra\psi_{d+m+1}$. Analogously, we define \emph{stable tangential bundle maps} as the colimit
\[\Bun_A(\xi^s,\psi;\ell_0)^\tau_\bullet\coloneq \colim_{m\ge k}\Bun_A(\xi\oplus \varepsilon^m,\psi_{d+m};\ell_0)^\tau_\bullet.\] As in \cref{sect:unstablebundlemaps}, there are semi-simplicial sub-monoids \[\hAut_A(\xi^s;C)_\bullet\subset \Bun_A(\xi^s,\xi^s;\inc)_\bullet\quad\text{and}\quad\hAut_A(\xi^s;C)_\bullet^\tau\subset \Bun_A(\xi^s,\xi^s;\inc)^\tau_\bullet,\] and also block variants of these semi-simplicial sets, defined by adding appropriate tildes. As the extension map \eqref{equ:extedingtanegntialbundlemaps} is compatible with the stabilisation maps, it gives rise to maps 
\begin{equation}\label{equ:euivtangential}\Bun_A(\xi^s,\psi;\ell_0)_\bullet\lra \Bun_A(\xi^s,\psi;\ell_0)_\bullet^\tau\quad\text{and}\quad \hAut_A(\xi^s;C)_\bullet\lra \hAut_A(\xi^s;C)_\bullet^\tau\end{equation} which we show in \cref{lem:tangentialisequ} to be equivalences as long as $X$ is a finite CW complex.

\subsection{Tangential structures}
\label{sect:tangentialstructures}
A $d$-dimensional \emph{tangential structure} is a fibration $\theta\colon B_d\ra \BO(d)$ whose target is the base of the universal $d$-dimensional vector bundle $\gamma_d\ra\BO(d)$. The semi-simplicial set of \emph{$\theta$-structures} on a $d$-dimensional vector bundle $\xi$ over a CW complex $X$ is the semi-simplicial set of bundle maps $\Bun_A(\xi,\theta^*\gamma_d;\ell_0)$ from $\xi$ to the pullback $\theta^*\gamma_d$ of the universal bundle along $\theta$, relative to a fixed bundle map $\ell_0\colon \xi|_A\ra \theta_d^*\gamma_d$. We denote the homotopy quotient (in the sense of \cref{sect:semisimplicial}) of the action of $\hAut_A(\xi,C)_\bullet\subset \Bun_A(\xi;\xi,\inc)_\bullet$ on the semi-simplicial set of $\theta$-structures by
\[\BhAut^{\theta}_A(\xi,C;\ell_0)_\bullet\coloneq \Bun_A(\xi,\theta^*\gamma_d;\ell_0)_\bullet\sslash \hAut_A(\xi,C)_\bullet.\]  
\begin{rem}Note that $\BhAut^{\theta}_A(\xi,C;\ell_0)_\bullet$ is in many cases empty or disconnected, so despite the suggestive notation, it is in general not the classifying space of any kind of group or monoid, (semi-)simplicial or topological. 
\end{rem}
\subsubsection{Stable tangential structures}\label{sec:stable-tangential-structures}
A \emph{stable tangential structure} is a fibration of the form $\Xi\colon B\ra \BO$, which induces a $d$-dimensional tangential structure $\Xi_d\colon B_d\ra\BO(d)$ for any $d\ge0$ by pulling back $\Xi$ along the stabilisation map $\BO(d)\ra\BO$. Note, however, that not all $d$-dimensional tangential structures arise this way, for instance the tangential structure $\EO(d)\ra\BO(d)$ encoding unstable framings does not. A stable tangential structure defines a stable vector bundle $\Xi^*\gamma=\{\Xi_{d}^*\gamma_d\}_{d\ge0}$ whose structure maps are induced by the canonical bundle map $\gamma_d\oplus\varepsilon\ra\gamma_{d+1}$ covering the usual stabilisation map $\BO(d)\ra\BO(d+1)$. Given a stable bundle map $\ell_0\colon \xi^s|_{A}\ra\Xi^*\gamma$, we call $\Bun_A(\xi^s,\Xi^*\gamma;\ell_0)$ the 
semi-simplicial set of \emph{stable $\theta$-structures} on $\xi$. As in the unstable case, we abbreviate
\begin{equation}\label{equ:tangentialhAut}\BhAut^{\Xi}_A(\xi^s,C;\ell_0)_\bullet\coloneq\Bun_A(\xi^s,\Xi^*\gamma;\ell_0)_\bullet\sslash \hAut_A(\xi^s,C)_\bullet.\end{equation} Tangential and or block variants of the previous definitions are defined by using the respective variants of bundle maps and adding appropriate tildes and or $\tau$-superscripts.

\subsubsection{Forgetting tangential structures}It follows from \cref{lem:simplicialaction} that the canonical sequence of semi-simplicial spaces
\begin{equation}\label{equ:bundfibreseqeunce}\Bun_A(\xi,\theta^*\gamma_d;\ell_0)_\bullet\lra\BhAut^{\theta}_A(\xi,C;\ell_0)_\bullet\lra \BhAut_A(\xi,C)_\bullet\end{equation} realises to a quasi-fibration, because $\Bun_A(\xi,\theta^*\gamma_d;\ell_0)_\bullet$ and $\hAut_A(\xi,C)_\bullet$ admit degeneracies since they satisfy the Kan property (they agree with the singular complex of a space, see \cref{sect:unstablebundlemaps}) and the action is by equivalences as $\hAut_A(\xi,C)_\bullet$ is group-like. The same argument applies to the stable analogue of this sequence involving \eqref{equ:tangentialhAut} and, using \cref{cor:tangentialiskan} (assuming that $X$ is a finite complex), also to its variants involving tangential and or block bundle maps. By definition of the universal bundle, the semi-simplicial set $\Bun_A(\xi,\theta^*\gamma_d;\ell_0)_\bullet$ is contractible in the case $\theta=\id$, so the second map in \eqref{equ:bundfibreseqeunce} is an equivalence for this particular choice of $\theta$. The analogous statement holds in the stable case as well and, as a result of Lemmas~\ref{lem:blockisnonblock} and~\ref{lem:tangentialisequ}, also for the tangential and or block variants.

\subsection{The derivative maps}
\label{sect:derivativemap}
Taking fibrewise derivatives of diffeomorphisms $\varphi\colon \Delta^p\times W\ra \Delta^p\times W$ over $\Delta^p$ induces a canonical semi-simplicial map
\begin{equation}\label{equ:derivativemap}\Diff_M(W,N)_\bullet\lra\hAut_M(\tau_W,N)_\bullet,\end{equation}
which we call the \emph{derivative map}. Furthermore, the notion of a tangential bundle map is designed exactly so that there is a \emph{block derivative map}
\begin{equation}\label{equ:blockderivativemap}\BlockDiff_M(W,N)_\bullet\lra\BlockhAut_M(\tau^s_W,N)^\tau_\bullet.\end{equation} given by assigning a block diffeomorphism $\Delta^p\times W\ra \Delta^p\times W$ its derivative $\tau_{\Delta^p}\times \tau_W\ra\tau_{\Delta^p}\times\tau_W,$ which indeed makes the square \eqref{equ:collarsquare} commute as $\varphi$ is assumed to be collared in the sense of \cref{sect:blockspaces}. 

\begin{rem}\ 
\begin{enumerate}
\item A different model of the block derivative map \eqref{equ:blockderivativemap} was considered by Berglund and Madsen in their prominent study of the rational homotopy type of spaces of block diffeomorphisms of manifolds with certain boundary conditions \cite{BerglundMadsen} (see Section 4.3 loc.\,cit. and \cref{rem:BerglundMadsenComparison} below).

\item Lemmas~\ref{lem:blockisnonblock} and~\ref{lem:tangentialisequ} provide a canonical equivalence \[\BlockhAut_M(\tau^s_W,N)^\tau_\bullet\simeq \hAut_M(\tau^s_W,N)_\bullet,\] so the delooped space $\BlockBhAut_M(\tau^s_W,N)^\tau_\bullet$ classifies fibrations $\pi_W\colon E\ra B$ with fibre $W$ together with the following data:\begin{enumerate}[label=(\arabic*)]
\item maps of fibrations $\pi_M\ra \pi_W\leftarrow \pi_N$ over the identity whose induced maps on fibres is equivalent to the system of inclusions $M\subset W\supset N$, 
\item a trivialisation of $\pi_M$, and 
\item a stable vector bundle $E\ra\BO$ over the total space of $\pi_W$ whose restriction to each fibre agrees with the stable tangent bundle of $W$.\end{enumerate} From this point of view, the block derivative \eqref{equ:blockderivativemap} comes as no surprise: it is reminiscent of the fact that a block bundle has an underlying fibration and a stable vertical tangent bundle by \cite{EbertRWBlock}. However, somewhat curiously, the block derivative map \eqref{equ:blockderivativemap} obviously factors over the variant $\BlockhAut_M(\tau_W,N)^\tau_\bullet$ involving the \emph{unstable} tangent bundle of $W$, giving rise to an \emph{unstable block derivative map}
\[\BlockBDiff_M(W,N)_\bullet\lra \BlockBhAut_M(\tau_W,N)^\tau_\bullet.\] The target of this map is neither equivalent to $\BhAut_M(\tau^s_W,N)_\bullet$ nor $\BhAut_M(\tau_W,N)_\bullet$, and it would be interesting to have a good description of what it classifies.\end{enumerate}

\end{rem}
We denote the submonoids of the path components hit by the derivative maps by
\begin{equation}\label{equ:components-hit-by-derivative}\hAut^\cong_M(\tau_W,N)_\bullet\subset\hAut_M(\tau_W,N)_\bullet\quad\text{and}\quad \BlockhAut^\cong_M(\tau^s_W,N)^\tau_\bullet\subset \BlockhAut_M(\tau^s_W,N)^\tau_\bullet\end{equation} and add the same $\cong$-superscript to \eqref{equ:tangentialhAut} and its tangential block variant to indicate when we take homotopy quotients by the submonoids \eqref{equ:components-hit-by-derivative} instead of the full monoids. Defining \begin{equation}\label{equ:moduli-space-stable-tangential-block}\BlockBDiff_M^{\Xi}(W,N;\ell_0)_\bullet\coloneq\BlockBun_M(\tau^s_W,{\Xi}^*\gamma;\ell_0)^\tau_\bullet\sslash \BlockDiff_M(W,N)_\bullet\end{equation} for a stable tangential structure $\Xi$ and a $\Xi$-structure $\ell_0$ on $\tau^s_W|_M$ and, more commonly, \[\BDiff_M^{\theta}(W,N;\ell_0)_\bullet\coloneq\Bun_M(\tau_W,\theta^*\gamma_d;\ell_0)_\bullet\sslash \Diff_M(W,N)_\bullet\] in the unstable case, the two derivative maps fit into a a commutative square
\begin{equation}\label{equ:stable-to-unstable-diff-to-aut}
\begin{tikzcd}
\BDiff^{\Xi_d}_M(W,N;\ell_0)_\bullet\arrow[r]\arrow[d]&\BhAut^{\Xi_d,\cong}_M(\tau_W,N;\ell_0)_\bullet\arrow[d]\\
\BlockBDiff^{\Xi}_M(W,N;\ell_0)_\bullet\arrow[r]&\BlockBhAut^{\Xi,\cong}_M(\tau^s_W,N;\ell_0)^\tau_\bullet 
\end{tikzcd}
\end{equation}
whose vertical maps are induced by the canonical composition \[\Bun_M(\tau_W,\Xi_d^*\gamma_d;\ell_0)_\bullet\ra\Bun_M(\tau^s_W,\Xi^*\gamma;\ell_0)_\bullet\ra\Bun_M(\tau^s_W,\Xi^*\gamma;\ell_0)^\tau_\bullet\subset \BlockBun_M(\tau^s_W,{\Xi}^*\gamma;\ell_0)^\tau_\bullet.\] 
All maps in this composition are equivalences, the first one by an exercise in obstruction theory and the second map as well as the final inclusion as a result of Lemmas~\ref{lem:blockisnonblock} and~\ref{lem:tangentialisequ}.  The composition of equivalences just discussed also induces the vertical maps in the commutative diagram 
\begin{equation}\label{equ:blockandnonblockbundlequotient}
\begin{tikzcd}
\BDiff^{\Xi_d}_M(W,N;\ell_0)_\bullet\arrow[r]\arrow[d]&\BDiff_M(W,N)_\bullet\arrow[d]\\
\BlockBDiff^{\Xi}_M(W,N;\ell_0)_\bullet\arrow[r]&\BlockBDiff_M(W,N)_\bullet
\end{tikzcd}
\end{equation}
whose horizontal maps are induced by forgetting tangential structures.
\begin{lem}\label{lem:forgetting-is-cartesian}The geometric realisation of the square \eqref{equ:blockandnonblockbundlequotient} is homotopy cartesian.
\end{lem}
\begin{proof}By construction the induced map on horizontal strict  fibres agrees with the composition of equivalences discussed above \eqref{equ:blockandnonblockbundlequotient}, so it suffices to show that the horizontal maps of the square realise to quasi-fibrations. As $\Diff_M(W,N)_\bullet$ and $\BlockDiff_M(W,N)_\bullet$ are Kan and group-like (see \cref{section:diffeomorphisms}), this follows from \cref{lem:simplicialaction} and \cref{cor:tangentialiskan}.
\end{proof}

\section{Surgery theory and spaces of block diffeomorphisms}
\label{sect:surgerytheory}
We use surgery theory to give a partial $p$-local description of the space $\BlockBDiff^\Xi_{\partial_0W}(W,\partial_1W)$ of block diffeomorphisms with tangential structures in terms of spaces of homotopy automorphisms with bundle data for manifold triads $(W;\partial_0W,\partial_1W)$ of dimension $d\ge6$ satisfying a $\pi$-$\pi$-condition. 

\begin{notation} As a point of notation, we refer to the geometric realisation of any of the semi-simplicial sets or spaces of the previous sections by omitting their $\bullet$-subscripts.
\end{notation}

\subsection{A reminder of surgery theory}\label{section:surgerytheory}
A $d$-dimensional \emph{manifold triad} is a triple $W=(W;\partial_0W,\partial_1W)$ consisting of a compact smooth $d$-manifold $W$ (possibly with corners) and (possibly empty) submanifolds $\partial_0W\subset W$ and $\partial_1W\subset W$ such that
\[\partial W=\partial_0 W\cup \partial_1 W\quad\text{and}\quad \partial(\partial_0W)=\partial_0W\cap\partial_1W=\partial(\partial_1W).\] A diffeomorphism or (simple) homotopy equivalence $(W;\partial_0W,\partial_1W)\ra(W';\partial_0W',\partial_1W')$ between two manifold triads is a diffeomorphism or (simple) homotopy equivalence $W\ra W'$ which restricts to a map of this kind between $\partial_0W$ and $\partial_0W'$, between $\partial_1W$ and $\partial_1W'$, and between their intersections. At times, we omit $\partial_0W$ and $\partial_1W$ from the notation and abbreviate a triad $(W;\partial_0W,\partial_1W)$ simply by $W$. The \emph{smooth structure set} $\cS(W)$ of a triad $W$ (see e.g.\,\cite[Ch.\,10]{WallBook}) is the collection of equivalence classes of simple homotopy equivalences of triads $N\ra W$ that restrict to a diffeomorphism $\partial_0N\ra\partial_0W$, where two such equivalences $N\ra W$ and $N'\ra W$ are considered equivalent if there exists a diffeomorphism of triads $N\ra N'$ that makes the triangle of triads
\[
\begin{tikzcd}[row sep=0.1cm,column sep=1.5cm]
N\arrow[dr,"\simeq",bend left=10]\arrow[dd,"\cong",swap]&\\
&W\\
N'\arrow[ur,"\simeq",swap,bend right=10]&
\end{tikzcd}
\]
homotopy commute relative to $\partial_0N$. The structure set $\cS(W)$ is canonically based; the identity serves as basepoint. The main tool to access $\cS(W)$ is the \emph{surgery exact sequence}
\begin{equation}\label{equ:ses}
\begin{tikzcd}[column sep=0.3cm]
\ldots\rar&\cS(W\times D^k)\rar&\cN(W\times D^k)\arrow[d, phantom, ""{coordinate, name=Z}]\rar&\cL(W\times D^k) \rar&\cS(W\times D^{k-1})\arrow[dllll,               
rounded corners,
to path={ -- ([xshift=2ex]\tikztostart.east)
|- (Z) [near end]\tikztonodes
-| ([xshift=-2ex]\tikztotarget.west) -- (\tikztotarget)}]
\\ \ldots\rar&\cL(W\times D^1)\rar&\cS(W)\rar&\cN(W)\rar&\cL(W)
\end{tikzcd}
\end{equation}
which relates the structure sets $\cS(W\times D^k)$ of the triads
\[W\times D^k=(W\times D^k;\partial_0W\times D^k\cup W\times \partial D^k;\partial_1W\times D^k)\]
 to the \emph{sets of normal invariants} $\cN(W\times D^k)$ and the \emph{$L$-groups} $\cL(W\times D^k)$. Assuming $d\ge 6$, this sequence is an exact sequence of abelian groups until $\cL(W\times D^1)$ where it continues as an exact sequence of based sets (see e.g.\,\cite[Ch.\,10]{WallBook}). The similarity with the long exact sequence of homotopy groups induced by a fibration is no coincidence: Quinn's \emph{surgery fibration} \cite{Quinn,QuinnThesis} is a homotopy fibration of based spaces
\begin{equation}\label{equ:Quinnfibration}\tilde{\bfS}(W)\lra\bfN(W)\lra \bfL(W)\end{equation} that induces \eqref{equ:ses} on homotopy groups (see also \cite[Ch.\,17A]{WallBook}, or \cite{Nicas} for a detailed account in the topological category). We refrain from describing  \eqref{equ:ses} or \eqref{equ:Quinnfibration} in detail; all we shall need to know are a few basic properties, which we explain in the following.

\subsubsection{The block structure space}\label{sect:structurespace}Assuming $d\ge6$, an application of the $s$-cobordism theorem results in a preferred homotopy equivalence \begin{equation}\label{equ:structurespaceequ}\BlockhAut^{\cong}_{\partial_0W}(W,\partial_1W)/\BlockDiff_{\partial_0W}(W)\simeq \tilde{\bfS}(W)_{\id}\end{equation} between the homotopy fibre of the canonical map $\BlockBDiff_{\partial_0W}(W)\ra \BlockBhAut^{\cong}_{\partial_0W}(W,\partial_1W)$ induced by inclusion and the basepoint component $\tilde{\bfS}(W)_{\id}\subset \tilde{\bfS}(W)$ of the \emph{block structure space} (cf.\,\cite[Ch.\,17A]{WallBook} or \cite[p.\,33-34]{BerglundMadsenI}). Here \begin{equation}\label{equ:components-aut-hit-by-diff}\BlockhAut^{\cong}_{\partial_0W}(W,\partial_1W)\subset \BlockhAut_{\partial_0W}(W,\partial_1W)\end{equation} are the components in the image of the canonical map $\BlockDiff_{\partial_0W}(W)\ra \BlockhAut_{\partial_0W}(W,\partial_1W)$. Note that a diffeomorphism of $W$ that fixes $\partial_0W$ pointwise automatically preserves $\partial_1W$ setwise, since $\partial_1W$ is the complement of the interior of $\partial_0W\subset\partial W$. On homotopy groups, the equivalence \eqref{equ:structurespaceequ} can be described as follows: using the combinatorial description of the relative homotopy groups of a semi-simplicial Kan pair, a class in \[\pi_k(\BlockhAut^{\cong}_{\partial_0W}(W,\partial_1W)/\BlockDiff_{\partial_0W}(W);\id)\cong\pi_k(\BlockhAut^{\cong}_{\partial_0W}(W,\partial_1W),\BlockDiff_{\partial_0W}(W);\id)\] is represented by a simple homotopy equivalence of triads $W\times D^k\ra W\times D^k$ which is the identity on $\partial_0W\times D^k$ and restricts to a diffeomorphism on $W\times \partial D^k$, so it defines a class in the structure set $\cS(W\times D^k)\cong\pi_k(\tilde{\bfS}(W); \id)$.

\subsubsection{The space of normal invariants}\label{sect:normalinvariants}The space of normal invariants $\bfN(W)$ admits a preferred homotopy equivalence to the pointed mapping space $\Maps_*(W/\partial_0W,\G/\oO)$ based at the constant map, where $\G/\oO$ is the homotopy fibre of the canonical map $\BO\ra\BG$ witnessing the fact that a stable vector bundle has an underlying stable spherical fibration (see e.g.\,\cite{Quinn} or \cite[Ch.\,10, 17A]{WallBook}). This map is one of infinite loop spaces, so its homotopy fibre $\G/\oO$ is an infinite loop space and hence so is the mapping space $\Maps_*(W/\partial_0W,\G/\oO)$. On homotopy groups, the composition
\[\tilde{\bfS}(W)\ra\bfN(W)\simeq\Maps_*(W/\partial_0W,\G/\oO)\ra\Maps_*(W/\partial_0W,\BO)\] has the following geometric description (see e.g.\,\cite[p.\,113-114]{WallBook}): given a class in the structure set $\pi_k(\tilde{\bfS}(W);*)\cong\cS(W\times D^k)$ represented by a simple homotopy equivalence $\varphi\colon N\ra W\times D^k$, choose a homotopy inverse $\tilde{\varphi}\colon W\times D^k\ra N$ of triads that agrees with $(\varphi|_{\partial_0N})^{-1}$ on $\partial_0(W\times D^k)$. Writing $\nu^s$ and $\tau^s$ for the stable normal respectively tangent bundle of a manifold, the stable vector bundle $(\tilde{\varphi}^*\nu^s_N)\oplus \tau^s_{W\times D^k}$ on $W\times D^k$ comes with a trivialisation on the subspace \[\partial_0(W\times D^k)=\partial_0W\times D^k\cup W\times \partial D^{k}\] by making use of the diffeomorphism $\tilde{\varphi}|_{\partial_0(W\times D^k)}$, and hence gives rise to a class \[[(\tilde{\varphi}^*\nu^s_N)\oplus \tau^s_{W\times D^k}]\in \pi_k(\Maps_*(W/\partial_0W,\BO);*).\] 

\subsubsection{The $L$-theory space}\label{sec:l-theory-space}The $L$-theory space $\bfL(W)$ is an infinite loop space as well (see e.g.\,\cite[Prop.\,2.2.2]{Nicas}), and its homotopy groups are canonically isomorphic to Wall's \emph{quadratic $L$-groups} (see e.g.\,\cite[Prop.\,2.2.4]{Nicas}). We shall not need to know much about these groups, except that $\pi_k(\bfL(W);*)\cong\cL(W\times D^k)$ vanishes if $W$ satisfies the \emph{$\pi$-$\pi$-condition}, i.e.\,if the inclusion $\partial_1W\subset W$ induces an equivalence on fundamental groupoids. This is a consequence of the exact sequence of $L$-groups of a triad (or more generally, $n$-ad) described for instance in \cite[Thm 3.1]{WallBook}. In other words, under these assumptions, the $L$-theory space $\bfL(W)$ is weakly contractible, so \eqref{equ:Quinnfibration} induces a preferred equivalence $\tilde{\bfS}(W)\simeq \bfN(W)$---an instance of the so-called \emph{$\pi$-$\pi$-theorem}.

\subsection{The block derivative map of a triad}With these basics of space-level surgery theory in mind, we now turn towards studying connectivity properties of the block derivative map resulting from \eqref{equ:blockderivativemap} and \eqref{equ:components-hit-by-derivative}
\[\BlockBDiff_{\partial_0W}(W)\lra \BlockBhAut^{\cong}_{\partial_0W}(\tau_W^s,\partial_1W)^\tau\] and its enhancement involving tangential structures (i.e.\ the realisation of the bottom row of \eqref{equ:stable-to-unstable-diff-to-aut}), beginning with a technical but useful lemma on the homotopy fibre featuring in \eqref{equ:structurespaceequ}. We refer \cref{section:nilpotency} for a recollection on nilpotent spaces.

\begin{lem}\label{lem:nilpotent}For a manifold triad $(W;\partial_0W,\partial_1W)$ of dimension $d\ge6$, the homotopy fibre \[\BlockhAut^\cong_{\partial_0W}(W,\partial_1W)/\BlockDiff_{\partial_0W}(W)=\hofib(\BlockBDiff_{\partial_0W}(W)\ra\BlockBhAut^\cong_{\partial_0W}(W,\partial_1W))\] is a nilpotent space.
\end{lem}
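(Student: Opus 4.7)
The plan is to reduce, via the equivalence \eqref{equ:structurespaceequ}, to showing that the basepoint component $\tilde{\bfS}(W)_{\id}$ of the block structure space is nilpotent, and then to exhibit $\tilde{\bfS}(W)_{\id}$ as the component of the basepoint in the homotopy fibre of a map whose total space is already known to be nilpotent, so that \cref{lem:nilpotentfibre} applies. The equivalence \eqref{equ:structurespaceequ} is where the hypothesis $d \ge 6$ enters, through the $s$-cobordism theorem.

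For the second step, I would invoke Quinn's surgery fibration \eqref{equ:Quinnfibration}, which realises $\tilde{\bfS}(W)$ as the homotopy fibre at the basepoint of the map $\bfN(W)\to\bfL(W)$. The crucial observation is that the identification $\bfN(W)\simeq \Maps_*(W/\partial_0W,\G/\oO)$ recalled in \cref{sect:normalinvariants} exhibits $\bfN(W)$ as a based mapping space into the infinite loop space $\G/\oO$, and hence as an infinite loop space in its own right. In particular, the basepoint component $\bfN(W)_0$ is a connected H-space, so it is simple and a fortiori nilpotent in the sense of \cref{section:nilpotency}.

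To conclude, I would apply \cref{lem:nilpotentfibre} to a fibration replacement of $\bfN(W)\to \bfL(W)$, with $e$ the basepoint. Since the total-space component $\bfN(W)_0$ containing $e$ is nilpotent, so is the component of $e$ in the fibre, which is precisely $\tilde{\bfS}(W)_{\id}$. The argument is essentially formal once both ingredients are in hand and does not present any genuine obstacle; the only point that requires any care is that nilpotency in this paper requires path connectedness, so one must pass to basepoint components on both sides of the surgery fibration, which is harmless because the identity structure on $W$ has trivial normal invariant and in particular lies in $\bfN(W)_0$.
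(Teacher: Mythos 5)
Your argument is correct and is essentially the same as the paper's: both pass through the equivalence \eqref{equ:structurespaceequ} to the structure space, realise $\tilde{\bfS}(W)_{\id}$ as a component of the homotopy fibre of the surgery obstruction map $\bfN(W)\to\bfL(W)$, observe that $\bfN(W)$ is nilpotent because it is an infinite loop space, and conclude by \cref{lem:nilpotentfibre}. The only cosmetic difference is that the paper separately records that the homotopy fibre is connected by noting that $\BlockBDiff_{\partial_0W}(W)\to\BlockBhAut^\cong_{\partial_0W}(W,\partial_1W)$ is $\pi_1$-surjective by the very definition of $\hAut^\cong$, whereas you obtain connectedness implicitly from the equivalence with the basepoint component $\tilde{\bfS}(W)_{\id}$.
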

\begin{proof}
By definition of the subspace $\BlockhAut^\cong_{\partial_0W}(W,\partial_1W)\subset \BlockhAut_{\partial_0W}(W,\partial_1W)$ in \cref{sect:structurespace}, the map $\BlockBDiff_{\partial_0W}(W)\ra \BhAut^\cong_{\partial_0W}(W,\partial_1W)$ is surjective on fundamental groups, so its homotopy fibre is connected. To see that it is nilpotent, we use the equivalence \eqref{equ:structurespaceequ} to the identity component $\tilde{\bfS}(W)_{\id}$ of the block structure space, which is itself equivalent to a component of the homotopy fibre of the surgery obstruction map $\bfN(W)\ra\bfL(W)$ of \eqref{equ:Quinnfibration}. Given this description of the space in consideration, the claim follows from an application of \cref{lem:nilpotentfibre}, using the fact that, being an infinite loop space, the space of normal invariants $\bfN(W)$ is nilpotent (see \cref{sect:normalinvariants}).\end{proof}

To state the main result of this section, some notation is in order. For a stable tangential structure $\Xi\colon B\ra\BO$ and a $\Xi$-structure $\ell\colon \tau^s_W\ra\Xi^*\gamma$ (see \cref{sec:stable-tangential-structures}), we write $\ell_0$ for the restriction of $\ell$ to $\tau^s_{W}|_{\partial_0W}$ and denote by 
\[\BlockBDiff^\Xi_{\partial_0W}(W;\ell_0)_\ell\quad\text{and}\quad \BlockBhAut^{\Xi,\cong}_{\partial_0W}(\tau_W^s,\partial_1W;\ell_0)^\tau_\ell\]
the components of the spaces (see \cref{sec:stable-tangential-structures} and \cref{sect:derivativemap} for the notation)
\[\BlockBDiff^\Xi_{\partial_0W}(W;\ell_0)=\BlockBun_{\partial_0W}(\tau^s_W,{\Xi}^*\gamma;\ell_0)^\tau\sslash \BlockDiff_{\partial_0W}(W)
\]
and 
\[\BlockBhAut^{\Xi,\cong}_{\partial_0W}(\tau_W^s,\partial_1W;\ell_0)^\tau=\BlockBun_{\partial_0W}(\tau^s_W,{\Xi}^*\gamma;\ell_0)^\tau\sslash\BlockhAut^{\cong}_{\partial_0W}(\tau_W^s,\partial_1W)^\tau\]
 that correspond to $\ell$ under the canonical bijection \[\pi_0\BlockBhAut^{\Xi,\cong}_{\partial_0W}(\tau_W^s,\partial_1W;\ell_0)^\tau\cong \pi_0\BlockBDiff^\Xi_{\partial_0W}(W;\ell_0)\cong  \pi_0\BlockBun_{\partial_0W}(\tau^s_W,{\Xi}^*\gamma;\ell_0)^\tau/\pi_0\BlockDiff_{\partial_0W}(W)\] resulting from \cref{rem:simplicial-remark} \ref{item:simplicial-remark-final} and the discussion in Sections~\ref{sec:stable-tangential-structures} and ~\ref{sect:derivativemap}, using that the map induced by the block derivative \[\pi_0\BlockDiff_{\partial_0W}(W)\lra \pi_0\BlockhAut^{\cong}_{\partial_0W}(\tau_W^s,\partial_1W)^\tau\] is surjective by the definition of the target in \eqref{equ:components-hit-by-derivative}. Recall from \cref{sec:l-theory-space} that a manifold triad $W=(W;\partial_0W,\partial_1W)$ satisfies the \emph{$\pi$-$\pi$-condition} if the inclusion $\partial_1W\subset W$ induces an equivalence on fundamental groupoids. 

\begin{thm}\label{thm:rationalmodel}
Let $d\ge6$ and $W$ be a $d$-dimensional triad satisfying the $\pi$-$\pi$-condition. For a stable tangential structure $\Xi$ and a $\Xi$-structure $\ell$ on $\tau_W^s$, the homotopy fibre of the map
\[\BlockBDiff^\Xi_{\partial_0W}(W;\ell_0)_\ell\lra \BlockBhAut^{\Xi,\cong}_{\partial_0W}(\tau_W^s,\partial_1W;\ell_0)^\tau_\ell\] is nilpotent and has finite homotopy groups. Moreover, this  fibre is $p$-locally $(2p-4-k)$-connected for primes $p$, where $k$ is the relative handle dimension of the inclusion $\partial_0W\subset W$.\end{thm}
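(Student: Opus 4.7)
My plan is to reduce the claim to an unstable surgery-theoretic computation, invoke the $\pi$-$\pi$-theorem, and then extract the $p$-local connectivity from the stable stems. First I would eliminate the tangential structure $\Xi$: by \cref{lem:simplicialaction}, each of the spaces $\BlockBDiff^\Xi_{\partial_0W}(W;\ell_0)_\ell$ and $\BlockBhAut^{\Xi,\cong}_{\partial_0W}(\tau_W^s,\partial_1W;\ell_0)^\tau_\ell$ is the total space of a quasi-fibration over the corresponding $\Xi$-less space whose fibre is the appropriate component of $\BlockBun_{\partial_0W}(\tau^s_W,\Xi^*\gamma;\ell_0)^\tau$, and the given map covers the block derivative on bases while restricting to the identity on fibres. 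The induced square is therefore homotopy cartesian. Combined with the equivalence $\BlockBhAut_{\partial_0W}^\cong(\tau^s_W, \partial_1W)^\tau \simeq \BhAut_{\partial_0W}^\cong(\tau^s_W, \partial_1W)$ furnished by Lemmas~\ref{lem:blockisnonblock} and~\ref{lem:tangentialisequ}, this reduces the problem to computing the homotopy fibre $U$ at the basepoint of the derivative map $\BlockBDiff_{\partial_0W}(W) \to \BhAut^\cong_{\partial_0W}(\tau_W^s, \partial_1W)$.

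Next I would fit $U$ into the surgery-theoretic framework of \cref{section:surgerytheory}. Postcomposing this derivative map with the forgetful map $\BhAut^\cong_{\partial_0W}(\tau_W^s, \partial_1W) \to \BhAut_{\partial_0W}^\cong(W, \partial_1W)$ recovers the map whose homotopy fibre is the block structure space component $\tilde{\bfS}(W)_{\id}$ via \eqref{equ:structurespaceequ}. The standard fibre sequence of a composition then supplies a fibration
\[ U \lra \tilde{\bfS}(W)_{\id} \lra F, \]
where $F$ is the homotopy fibre of the forgetful map. Delooping the fibration of group-like monoids $\hAut^\cong(\tau_W^s, \partial_1W) \to \hAut^\cong(W, \partial_1W)$ --- whose fibre over $\id_W$ is the space of stable bundle automorphisms of $\tau_W^s$ rel $\partial_0W$, equivalent to $\Maps_*(W/\partial_0W, \oO)$ --- identifies the basepoint component of $F$ with $\Maps_*(W/\partial_0W, \BO)_0$. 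The $\pi$-$\pi$-theorem furnishes the equivalence $\tilde{\bfS}(W) \simeq \bfN(W) \simeq \Maps_*(W/\partial_0W, \G/\oO)$, and one verifies that under all of the above identifications the map $\tilde{\bfS}(W)_{\id} \to F$ corresponds on basepoint components to the map $\Maps_*(W/\partial_0W, \G/\oO) \to \Maps_*(W/\partial_0W, \BO)$ induced by $\G/\oO \to \BO$. The fibration $\G \to \G/\oO \to \BO$ therefore yields $U \simeq \Maps_*(W/\partial_0W, \G)_0$ on basepoint components.

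From here the conclusion is short. Each component of $\G$ is $p$-locally $(2p-4)$-connected, since the first nontrivial $p$-local stable stem lies in degree $2p-3$. The relative handle dimension of $\partial_0W\subset W$ being $k$ implies that $W/\partial_0W$ admits a CW structure of dimension at most $k$, so a standard obstruction-theoretic argument shows that $\Maps_*(W/\partial_0W, \G)_0$ is $p$-locally $(2p-4-k)$-connected. Finally, $U$ is a component of a mapping space into an infinite loop space, hence itself an infinite loop space and therefore nilpotent.

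The main obstacle I expect is the map-matching in the second paragraph: one must verify that the geometric description of the normal invariant map recorded in \cref{sect:normalinvariants} (involving the stable bundle $\tilde{\varphi}^*\nu_N^s \oplus \tau^s_{W\times D^k}$ on $W\times D^k$) corresponds, under the $\pi$-$\pi$-equivalence $\bfN(W) \simeq \Maps_*(W/\partial_0W,\G/\oO)$, to the forgetful map $\BhAut^\cong(\tau_W^s, \partial_1W) \to \BhAut^\cong(W, \partial_1W)$ appearing above. This is classical surgery-theoretic bookkeeping, but requires care with signs, stabilisation conventions, and basepoint normalisations.
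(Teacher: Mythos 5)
Your proposal follows essentially the same line of attack as the paper's proof: reduce to $\Xi=\id$ via the homotopy cartesian square induced by \cref{lem:simplicialaction}, identify the resulting map with the normal invariant map via the $\pi$-$\pi$-theorem and the geometric description in \cref{sect:structurespace,sect:normalinvariants}, and extract the $p$-local connectivity from Serre's theorem on $p$-torsion in the stable stems together with the bound on the handle dimension. The only difference in framing is that you compute the homotopy fibre $U$ directly as $\Maps_*(W/\partial_0W,\G)_0$, while the paper instead bounds the connectivity of the normal-invariant map $\tilde{\bfS}(W)_{\id}\to\Maps_*(W/\partial_0W,\BO)_0$; these are equivalent formulations once one tracks the shift by one in the long exact sequence.

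There is, however, a gap in your nilpotency argument. You deduce that $U$ is nilpotent by asserting $U\simeq\Maps_*(W/\partial_0W,\G)_0$ as spaces and appealing to the infinite-loop-space structure on $\G$. This requires the square relating $\tilde{\bfS}(W)_{\id}\to F$ to $\Maps_*(W/\partial_0W,\G/\oO)_0\to\Maps_*(W/\partial_0W,\BO)_0$ to commute \emph{up to homotopy}; but the ``bookkeeping'' you correctly flag as the main obstacle --- carried out in the paper via the explicit description of the normal invariant on homotopy groups from \cref{sect:normalinvariants} --- only yields commutativity upon taking homotopy groups. An isomorphism of homotopy groups alone does not transfer nilpotency, since it says nothing about the $\pi_1$-action on higher $\pi_n$, nor does it produce a genuine equivalence of homotopy fibres. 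The paper circumvents this by proving nilpotency separately: your own fibration $U\to\tilde{\bfS}(W)_{\id}\to F$ already does the job, since $\tilde{\bfS}(W)_{\id}$ is nilpotent by \cref{lem:nilpotent} (an application of \cref{lem:nilpotentfibre} to the Quinn fibration, using that $\bfN(W)$ is an infinite loop space), and a further application of \cref{lem:nilpotentfibre} gives nilpotency of $U$. With nilpotency established independently, \cref{lem:plocalconnectivity} lets you test $p$-local connectivity on homotopy groups, so the homotopy-group-level commutativity you can actually verify is enough to close the argument. Two more small cautions: the identification of $F$ with $\Maps_*(W/\partial_0W,\BO)_0$ involves a basepoint change by multiplication with $\nu^s_W$ and an involution on $\BO$ (harmless since that mapping space is simple), and the map from the block structure space description to mapping spaces passes through a $1$-coconnected map (compare \eqref{equ:comparisondiagram2}), which does not affect the connectivity estimate but should be acknowledged.
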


\begin{rem}\label{rem:BerglundMadsenComparison}\cref{thm:rationalmodel} is inspired by a similar result of Berglund and Madsen \cite[Thm\,1.1]{BerglundMadsen}, which applies to a different class of triads, namely those satisfying $\partial_0W=\partial W\cong S^{d-1}$. Another point in which their result differs from ours is that it is purely rational, and does in fact not seem to admit a $p$-local refinement analogous to \cref{thm:rationalmodel}. This is because $p$-torsion occurs for primes $p$ that can be rather large with respect to the degree, originating from contributions of numerators of divided Bernoulli numbers to the homotopy groups of the homotopy fibre $\TOP/\oO$ of the canonical map $\BO\ra\BTOP$.
\end{rem}

\begin{proof}[Proof of \cref{thm:rationalmodel}]Using \cref{cor:tangentialiskan}, an application of \cref{lem:simplicialaction} to the horizontal arrows of the canonical square
\begin{equation} \label{equ:squarexistructure}
\begin{tikzcd}
\BlockBDiff^{\Xi}_{\partial_0W}(W;\ell_0)^\tau_\ell\rar\dar&\BlockBDiff_{\partial_0W}(W)\arrow[d]\\
\BlockBhAut^{\Xi,\cong}_{\partial_0W}(\tau_W^s,\partial_1W;\ell_0)^\tau_\ell\rar&\BlockBhAut^{\cong}_{\partial_0W}(\tau_W^s,\partial_1W)^\tau,
\end{tikzcd}
\end{equation}
identifies the horizontal homotopy fibres with the union of components of the space $\BlockBun_{\partial_0W}(\tau^s_W,{\Xi}^*\gamma;\ell_0)^\tau$ given by the $\pi_0\BlockDiff_{\partial_0W}(W)$-orbit of the $\Xi$-structure $\ell$, so \eqref{equ:squarexistructure} is homotopy cartesian, which reduces the proof to the case $\Xi=\id$ in which both rows of the square are equivalences (see \cref{sec:stable-tangential-structures}). To settle the case $\Xi=\id$, we consult the map of fibre sequences induced by the block derivative map
\begin{equation}\label{equ:added-map-between-fibration}
\begin{tikzcd}[column sep=0.4cm,row sep=0.5cm]
\frac{\BlockhAut^{\cong}_{\partial_0W}(W,\partial_1W)}{\BlockDiff_{\partial_0W}(W)}
\rar\arrow[d,swap]&\BlockBDiff_{\partial_0W}(W)\rar\dar&\BlockBhAut^{\cong}_{\partial_0W}(W,\partial_1W)\arrow[d,equal]\\
\frac{\BlockhAut^{\cong}_{\partial_0W}(W,\partial_1W)}{\BlockhAut^{\cong}_{\partial_0W}(\tau_W^s,\partial_1W)^\tau}\rar&\BlockBhAut^{\cong}_{\partial_0W}(\tau_W^s,\partial_1W)^\tau\rar&\BlockBhAut^{\cong}_{\partial_0W}(W,\partial_1W)
\end{tikzcd}
\end{equation}
in order to see that the homotopy fibre in question agrees with the homotopy fibre of the left vertical map and is therefore nilpotent by Lemmas~\ref{lem:nilpotentfibre} and~\ref{lem:nilpotent}. The bottom right horizontal map of \eqref{equ:added-map-between-fibration} forms the rightmost column of a commutative diagram \begin{equation}\label{equ:comparisondiagram1}
\begin{tikzcd}
\BhAut^{\cong}_{\partial_0W}(\tau_W^s,\partial_1W)
\arrow[r,"\simeq"]\arrow[d]&\BhAut^{\cong}_{\partial_0W}(\tau_W^s,\partial_1W)^\tau\arrow[r,"\simeq"]\arrow[d]&\BlockBhAut^{\cong}_{\partial_0W}(\tau_W^s,\partial_1W)^\tau\arrow[d]\\
\BhAut^{\cong}_{\partial_0W}(W,\partial_1W)
\arrow[r,equal]&\BhAut^{\cong}_{\partial_0W}(W,\partial_1W)\arrow[r,"\simeq"]&\BlockBhAut^{\cong}_{\partial_0W}(W,\partial_1W)
\end{tikzcd}
\end{equation}
whose right horizontal maps are induced by inclusion and are equivalences; the upper by \cref{lem:blockisnonblock} and the lower by the discussion in \cref{sect:hAutPrelim}. The left upper horizontal map is the equivalence from \eqref{equ:euivtangential} and the leftmost vertical arrow fits into a commutative square induced by inclusion 
\begin{equation}\label{equ:comparisondiagram2}
\begin{tikzcd}
\BhAut^{\cong}_{\partial_0W}(\tau_W^s,\partial_1W)\arrow[r]\arrow[d]&\BhAut_{\partial_0W}(\tau_W^s,\partial_1W)\arrow[d]\\
\BhAut^{\cong}_{\partial_0W}(W,\partial_1W)\arrow[r]&\BhAut_{\partial_0W}(W,\partial_1W)_{\tau^s_{W}},
\end{tikzcd}
\end{equation}
where $\hAut_{\partial_0W}(W,\partial_1W)_{\tau_W^s}$ are the components in the image of the map 
\begin{equation}\label{equ:forget-bundle-maps}\hAut_{\partial_0W}(\tau_W^s,\partial_1W)\lra\hAut_{\partial_0W}(W,\partial_1W)\end{equation} that forgets  bundle data. Before taking geometric realisation, the map \eqref{equ:forget-bundle-maps} is easily seen to be a Kan fibration, so the homotopy fibre of the right vertical map in \eqref{equ:comparisondiagram2} is equivalent to the classifying space of the gauge group of $\tau_W^s$ relative to $\partial_0W$, which is in turn canonically equivalent to the space $\Maps_{\partial_0W}(W,\BO)_{\tau^s_W}$ of maps homotopic to a choice of classifying map for $\tau_W^s$ relative to $\partial_0W$. The horizontal arrows in \eqref{equ:comparisondiagram2} are $1$-coconnected by construction, so the same holds for the induced map on vertical homotopy fibres. Combining this with the chain of equivalences \eqref{equ:comparisondiagram1}, we see that the left vertical map in \eqref{equ:added-map-between-fibration} agrees, up to canonical equivalence and postcomposition with a $1$-coconnected map, with a map 
\[\BlockhAut^{\cong}_{\partial_0W}(W,\partial_1W)/\BlockDiff_{\partial_0W}(W)\lra \Maps_{\partial_0W}(W,\BO)_{\tau_W^s},\] so it suffices to show that this map between nilpotent spaces is $p$-locally $(2p-3-k)$-connected and that its homotopy fibre has finite homotopy groups. On homotopy groups, this map has the following description: a class in $\pi_k(\BlockhAut^{\cong}_{\partial_0W}(W,\partial_1W),\BlockDiff_{\partial_0W}(W);\id)$ is represented by a homotopy equivalence of triads $\varphi\colon W\times D^k\ra W\times D^k$ that is the identity on $D^k\times \partial_0W$ and restricts to a diffeomorphism on $\partial D^k\times W$ that is the identity on $*\times M$ for a base point $*\in \partial D^k$. The pullback $\varphi^*\tau_{W\times D^k}^s$ is a stable vector bundle over $W\times D^k$ that agrees with $\tau_{W\times D^k}^s$ over $D^k\times \partial_0W\cup \{*\}\times W$ and comes with a canonical identification $\varphi^*\tau_{W\times D^k}^s|_{\partial D^k\times M}\cong\tau_{W\times D^k}^s|_{\partial D^k\times M}$ given by the derivative of $\varphi|_{\partial D^k\times M}$, so it defines a class in the $k$th homotopy group of $\Maps_{\partial_0W}(W,\BO)_{\tau_W^s}$ based at a choice of classifying map for $\tau_W^s$. Comparing this description with those of the equivalence \[\BlockhAut^{\cong}_{\partial_0W}(W,\partial_1W)/\BlockDiff_{\partial_0W}(W)\simeq\tilde{\bfS}(W)_{\id}\] and the map $\tilde{\bfS}(W)_{\id}\ra\Maps_*(W/\partial_0W,\BO)_0$ on homotopy groups explained in Sections~\ref{sect:structurespace} and~\ref{sect:normalinvariants} respectively, one sees that the diagram of nilpotent spaces
\[
\begin{tikzcd}[row sep=0.4cm,  ar symbol/.style = {draw=none,"\textstyle#1" description,sloped},
  equivalent/.style = {ar symbol={\simeq}}]
\BlockhAut^{\cong}_{\partial_0W}(W,\partial_1W)/\BlockDiff_{\partial_0W}(W)\arrow[r]\arrow[d,equivalent]&\Maps_{\partial_0W}(W,\BO)_{\tau^s_W}\arrow[dd,"\iota \circ((-)\cdot\nu^s_W)","\simeq"']\\[-0.3cm]
\tilde{\bfS}(W)_{\id}\arrow[d,"\simeq",swap]&\\
\Maps_*(W/\partial_0W,\G/\oO)_0\arrow[r]&\Maps_*(W/\partial_0W,\BO)_{0},
\end{tikzcd}
\]
commutes upon taking homotopy groups. Here the upper horizontal map is the one we just discussed, the right vertical equivalence is induced by multiplication with the stable normal bundle $\nu^s_W$ using the infinite loop space structure induced from that of $\BO$ followed by the involution on $\Maps_{\partial_0W}(W,\BO)_{0}$ induced by the canonical involution on $\BO$, the left vertical equivalences are induced by the surgery fibration (see Sections~\ref{sect:structurespace} and~\ref{sect:normalinvariants}), and the bottom horizontal is given by postcomposition with the canonical map $\G/\oO\ra \BO$.  Note that, being an infinite loop space, the mapping space $\Maps_*(W/\partial_0W,\BO)_{0}$ is simple, so its homotopy groups at different base points are canonically identified. The bottom arrow is a map of infinite loop spaces and its homotopy fibre equivalent to a collection of components of the infinite loop space $\Maps_*(W/\partial_0W,\G)$, so to finish the proof, we are left to show that the homotopy groups of this infinite loop space (including in degree $0$) are finite and vanish $p$-locally in degrees $*<(2p-3-k)$. This follows from an application of obstruction theory, since $W/\partial_0W$ has no cohomology above degree $k$ by the assumption on the relative handle dimension of $\partial_0W\subset W$ and the homotopy groups of $\G$ are $\bfZ/2$ in degree $0$ and agree the stable homotopy groups of spheres in positive degrees, which are finite and $p$-torsion free in degrees $*<2p-3$ by a result of Serre \cite[p.\,498, Prop.\,5]{Serre}.
\end{proof}

The stable tangential structure we shall be primarily interested in is the one encoding stable framings, which we denote by $\sfr\colon \EO\ra\BO$. In this case, the $p$-local approximation of the space of block diffeomorphisms with tangential structures provided by \cref{thm:rationalmodel} can be further simplified in terms of the union of components \[\BlockhAut^{\cong}_{\partial_0W}(W,\partial_1W)_\ell\subset\BlockhAut^{\cong}_{\partial_0W}(W,\partial_1W)\] given by the image of the canonical map that forgets tangential structures
\[\BlockBhAut^{\sfr,\cong}_{\partial_0W}(\tau_W^s,\partial_1W;\ell_0)^\tau_\ell\lra \BlockBhAut^{\cong}_{\partial_0W}(W,\partial_1W)\] on fundamental groups based at a fixed stable framing $\ell$ of $\tau_{W}$ (see \eqref{equ:components-hit-by-derivative} and \eqref{equ:components-aut-hit-by-diff} for the notation). Loosely speaking, these are the components of homotopy equivalences of triads that are homotopic to a diffeomorphism preserving the component of the stable framing $\ell$.

\begin{cor}\label{cor:stableframed}Let $d\ge6$ and $W$ be a $d$-dimensional manifold triad satisfying the $\pi$-$\pi$-condition. For a stable framing $\ell$ of $W$, the homotopy fibre of the natural map
\[\BlockBDiff^{\sfr}_{\partial_0W}(W;\ell_0)_\ell\lra \BlockBhAut^{\cong}_{\partial_0W}(W,\partial_1W)_\ell\] is nilpotent and has finite homotopy groups. Moreover, this fibre is $p$-locally $(2p-4-k)$-connected for primes $p$, where $k$ is the relative handle dimension of the inclusion $\partial_0W\subset W$.
\end{cor}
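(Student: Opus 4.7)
The plan is to deduce the corollary from \cref{thm:rationalmodel} by specialising the stable tangential structure to $\Xi = \sfr$. First, I would factor the natural map of the corollary as
\[
\BlockBDiff^{\sfr}_{\partial_0 W}(W;\ell_0)_\ell \xra{\;a\;} \BlockBhAut^{\sfr,\cong}_{\partial_0 W}(\tau_W^s,\partial_1W;\ell_0)^\tau_\ell \xra{\;b\;} \BlockBhAut^{\cong}_{\partial_0 W}(W,\partial_1W)_\ell,
\]
where $a$ is the block derivative refined by the stable framing datum of \cref{sect:derivativemap} and $b$ is the map induced by forgetting the framing, combined with the canonical equivalence $\BlockBhAut^{\cong,\tau}_{\partial_0 W}(\tau_W^s,\partial_1W)\simeq \BlockBhAut^{\cong}_{\partial_0 W}(W,\partial_1W)$ coming from Lemmas~\ref{lem:blockisnonblock} and~\ref{lem:tangentialisequ}. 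Applying \cref{thm:rationalmodel} with $\Xi = \sfr$ then immediately yields that the homotopy fibre of $a$ is nilpotent and $p$-locally $(2p-4-k)$-connected.

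The remaining task is to show that the forgetful map $b$ is a weak equivalence, after which the conclusion on the composite is automatic. To this end I would invoke the quasi-fibration
\[
\BlockBun^\tau_{\partial_0 W}(\tau_W^s,\sfr^*\gamma;\ell_0)\lra \BlockBhAut^{\sfr,\cong}_{\partial_0 W}(\tau_W^s,\partial_1W;\ell_0)^\tau\lra \BlockBhAut^{\cong,\tau}_{\partial_0 W}(\tau_W^s,\partial_1W)
\]
provided by \cref{lem:simplicialaction}, whose fibre is the space of stable framings relative to $\ell_0$. The restriction to the $\ell$-component on the source singles out a single component, and the restriction on the target $\BlockBhAut^{\cong}_{\partial_0 W}(W,\partial_1W)_\ell$ is defined precisely so that the induced map on fundamental groups is surjective onto it; after these compatible restrictions, the framing contributions ought to be absorbed into the cover picked out by $_\ell$, leaving $b$ an equivalence.

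Nilpotency of the fibre of the composite is inherited from that of the fibre of $a$ via \cref{lem:nilpotent} and \cref{lem:nilpotentfibre}, since equivalences preserve nilpotency of fibres. The hardest step will be carefully justifying that the combined $_\ell$-restrictions on source and target of $b$ suffice to trivialise its fibre; this requires a detailed understanding of how $\BlockhAut^{\cong,\tau}_{\partial_0 W}(\tau_W^s,\partial_1W)$ acts on the space of stable framings and how this action intertwines with the definition of the $\ell$-components, analogous to the component analysis carried out within the proof of \cref{thm:rationalmodel}.
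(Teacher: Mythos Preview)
Your approach matches the paper's: factor through $\BlockBhAut^{\sfr,\cong}_{\partial_0 W}(\tau_W^s,\partial_1W;\ell_0)^\tau_\ell$, apply \cref{thm:rationalmodel} to the first map, and show the second map $b$ is an equivalence. The paper completes the step you flag as hardest by first using the comparison diagrams \eqref{equ:comparisondiagram1} and \eqref{equ:comparisondiagram2} to replace the block/tangential version by the ordinary one, and then observing that the loop space of the fibre of $\hAut_{\partial_0W}(\tau_W^s,\partial_1W)\to\hAut_{\partial_0W}(W,\partial_1W)$ (the gauge group of $\tau_W^s$ rel $\partial_0W$) acts on $\Bun_{\partial_0W}(\tau_W^s,\sfr^*\gamma;\ell_0)_\ell$ by precomposition, which is a simply transitive action---a torsor in the homotopical sense---so the map acting on $\ell$ is an equivalence and hence so is $b$.
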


\begin{proof}Once we show that the map \[\BlockBhAut^{\sfr,\cong}_{\partial_0W}(\tau_W^s,\partial_1W;\ell_0)^\tau_\ell\lra \BlockBhAut^{\cong}_{\partial_0W}(W,\partial_1W)_\ell\] is an equivalence, the statement is a consequence of \cref{thm:rationalmodel}. By construction, the homotopy fibre of this map is connected. Using \eqref{equ:comparisondiagram1} and \eqref{equ:comparisondiagram2}, one sees that it is thus sufficient to show that the map 
\[\BhAut^{\sfr}_{\partial_0W}(\tau_W^s,\partial_1W;\ell_0)_\ell\lra \BhAut_{\partial_0W}(W,\partial_1W)\] is an equivalence. 
Taking vertical homotopy fibres in the map of fibre sequences
\[
\begin{tikzcd}[column sep=0.4cm]
\Bun_{\partial_0W}(\tau_W^s,\sfr^*\gamma;\ell_0)_\ell\rar \dar&\BhAut^{\sfr}_{\partial_0W}(\tau_W^s,\partial_1W;\ell_0)_\ell\rar\dar& \BhAut_{\partial_0W}(\tau_W^s,\partial_1W)\arrow[d]\\
*\rar&\BhAut_{\partial_0W}(W,\partial_1W)\arrow[r,equal]&\BhAut_{\partial_0W}(W,\partial_1W),
\end{tikzcd}
\]
where $\Bun_{\partial_0W}(\tau_W^s,\sfr^*\gamma;\ell_0)_\ell\subset \Bun_{\partial_0W}(\tau_W^s,\sfr^*\gamma;\ell_0)$ are the components of the $\ell$-orbit of the $\pi_0\hAut_{\partial_0W}(\tau_W^s,\partial_1W)$-action, we see that it suffices to show that the induced action of the loop space of the homotopy fibre of the right vertical map on $\Bun_{\partial_0W}(\tau_W^s,\sfr^*\gamma;\ell_0)_\ell$ is a torsor in the homotopical sense, i.e.\,that the map given by acting on $\ell$ is an equivalence. Since $\hAut_{\partial_0W}(\tau_W^s,\partial_1W)_\bullet\ra \hAut_{\partial_0W}(W,\partial_1W)_\bullet$ is a Kan fibration, this loop space is canonically equivalent to the space of bundle self-maps of $\tau_W^s$ that cover the identity and agree with the identity on $\tau_W^s|_{\partial_0W}$. Moreover, by construction of the top fibration, the induced action on $\Bun_{\partial_0W}(\tau_W^s,\sfr^*\gamma;\ell_0)_\ell$ is given by precomposition. Suitably modeled, this action is simply transitive, so the assertion follows.
\end{proof}

\section{High-dimensional handlebodies and their mapping classes}\label{section:mcg}
This section serves to compute variants of the mapping class group of a high-dimensional handlebody up to extensions in terms of automorphisms of the integral homology.

\subsection{Automorphisms of handlebodies}
\label{sect:autVg}
The proof of \cref{thm:mainthm} relies on considering a more general family of manifold than discs, the boundary connected sums
\[V_g\coloneq\natural^g (D^{n+1}\times S^n),\] and their boundaries as well as the manifolds obtained by cutting out a fixed embedded disc $D^{2n}\subset \partial V_g$, denoted by
\[W_g\coloneq\partial V_g\cong\sharp^g(S^n\times S^n)\quad\text{and}\quad W_{g,1}\coloneq W_g\backslash \interior{D^{2n}}.\] This includes the case $g=0$, that is $V_0=D^{2n+1}$, $W_0=S^{2n}$, and $W_{0,1}=D^{2n}$. Using the notation introduced in Sections \ref{section:diffeomorphisms} and \ref{sect:hAutPrelim}, the restriction of diffeomorphisms and relative homotopy automorphisms of $V_g$ to its boundary induces a commutative diagram 
\begin{equation}
\begin{tikzcd}\label{comparisonmaps}
\Diff_\partial(V_g)\arrow[r]\arrow[d]&\Diff_{D^{2n}}(V_g)\arrow[r]\arrow[d]&\Diff_{\partial}(W_{g,1})\arrow[d]\\
\hAut_\partial(V_g)\arrow[r]&\hAut_{D^{2n}}(V_g,W_{g,1})\arrow[r]&\hAut_{\partial}(W_{g,1})
\end{tikzcd}
\end{equation}
where the right horizontal maps are fibrations and the left maps the inclusions of the fibres over the identity. These fibrations need not be surjective; we denote their images by \[\Diff^{\ext}_{\partial}(W_{g,1})\subset \Diff_\partial(W_{g,1})\quad\text{ and }\quad\hAut^{\ext}_{\partial}(W_{g,1})\subset \hAut_{\partial}(W_{g,1}).\] Furthermore, in agreement with \eqref{equ:components-aut-hit-by-diff} we write \[\hAut^\cong_\partial(V_g),\quad \hAut^\cong_{D^{2n}}(V_g,W_{g,1}),\quad\text{and}\quad \hAut^\cong_{\partial}(W_{g,1})\] for the components hit by the vertical maps. Block variants of all of the above automorphism spaces are defined in the same way.

\subsection{The mapping class group}
\label{sect:mcgsubsection}As a first step in our analysis of the mapping class group $\pi_0\Diff_{D^{2n}}(V_g)$, we observe that we may equally study orientation-preserving diffeomorphisms of $V_g$ that do not necessarily fix the embedded disc in the boundary $D^{2n}\subset \partial V_{g}$, or diffeomorphisms that preserve a disc $D^{2n+1}\subset \interior{V_g}$ in the interior set- or pointwise.

\begin{lem}\label{noboundary}For $n\ge2$ and discs $D^{2n}\subset \partial V_g$ and $D^{2n+1}\subset \interior{V_g}$, the canonical compositions
\begin{align*}\begin{gathered}
\pi_0\Diff_{D^{2n+1}}(V_g)\lra \pi_0\Diffor(V_g,D^{2n+1})\lra \pi_0\Diffor(V_g)\quad\text{and}\\\pi_0\Diff_{D^{2n}}(V_g) \lra\pi_0\Diffor(V_g,D^{2n}) \lra \pi_0\Diffor(V_g)\end{gathered}
\end{align*}
consist of isomorphisms. 
\end{lem}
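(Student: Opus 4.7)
The plan is to realize both compositions as pieces of the long exact sequences of standard isotopy-extension fibrations, reducing the statement to a manipulation of the rotation loop in $\pi_1\SO$. I focus on the interior case $D=D^{2n+1}\subset\interior{V_g}$; the boundary case $D=D^{2n}\subset\partial V_g$ is entirely analogous, with embedding spaces taken in $\partial V_g$ rather than in $\interior{V_g}$.

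The key fibration (from isotopy extension, cf.\,Palais--Cerf) is
\[
\Diff_{D}(V_g)\lra \Diffor(V_g)\lra \Embor(D,\interior{V_g}),
\]
with $\Diffor(V_g)$ acting on oriented codimension-$0$ embeddings of $D$ by postcomposition, the stabilizer of the standard embedding being $\Diff_D(V_g)$. By a theorem of Cerf, $\Embor(D,\interior{V_g})\simeq\Fror(\interior{V_g})$, and since each summand $D^{n+1}\times S^n$ is parallelizable (as $TS^n\oplus\varepsilon$ is trivial), so is $V_g$; combined with the fact that $V_g\simeq\vee^g S^n$ is simply connected for $n\ge 2$, we get $\Fror(\interior{V_g})\simeq \interior{V_g}\times\SO(2n+1)$. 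Thus $\pi_0\Embor(D,\interior{V_g})=0$ and $\pi_1\Embor(D,\interior{V_g})\cong\pi_1\SO(2n+1)\cong\bfZ/2$, generated by the rotation loop. The long exact sequence gives surjectivity of $\pi_0\Diff_D(V_g)\to\pi_0\Diffor(V_g)$; injectivity amounts to lifting the rotation generator to a loop in $\Diffor(V_g)$. This is achieved by a \emph{damped rotation}: choose a slightly larger disc $D\subset D'\subset\interior{V_g}$, a bump function $\rho\colon D'\to[0,1]$ with $\rho\equiv 1$ on $D$ and compact support in $D'$, and set $\phi_t(x)=r_{t\rho(x)}(x)$ on $D'$ (with $r_\theta\in\SO(2n+1)$ the standard rotation by angle $2\pi\theta$ in a chosen $2$-plane), extended by the identity on $V_g\setminus D'$. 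Then $\{\phi_t\}_{t\in[0,1]}$ is a loop in $\Diffor(V_g)$ whose restriction to $D$ is the rotation loop, so the connecting map $\pi_1\Embor\to\pi_0\Diff_D(V_g)$ vanishes, yielding $\pi_0\Diff_D(V_g)\cong\pi_0\Diffor(V_g)$.

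To factor this isomorphism through $\pi_0\Diffor(V_g,D)$, apply the same analysis to the coarser fibration $\Diffor(V_g,D)\to\Diffor(V_g)\to X$, where $X$ is the space of oriented codimension-$0$ embedded images of $D$ in $\interior{V_g}$. One checks using the auxiliary fibration $\Diffor(D)\to\Embor(D,\interior{V_g})\to X$ that $\pi_0 X=0$, and the same damped-rotation loop descends from $\Diffor(V_g)$ to kill the image of $\pi_1 X$ in $\pi_0\Diffor(V_g,D)$. This gives $\pi_0\Diffor(V_g,D)\cong\pi_0\Diffor(V_g)$, and the remaining isomorphism $\pi_0\Diff_D(V_g)\cong\pi_0\Diffor(V_g,D)$ follows from two-out-of-three. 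In the boundary-disc case, the sole modification is that the neighborhood $D'$ is produced by using the collar $\partial V_g\times[0,\epsilon)\subset V_g$ to thicken $D\subset\partial V_g$ into a half-ball in $V_g$, so that the damped rotation can again be extended by the identity. The main obstacle is this last point: producing a loop of diffeomorphisms of $V_g$ inducing a given rotation of $D$ demands care with the collar and with the simultaneous behaviour near $\partial D\subset\partial V_g$, but is an elementary cut-off construction once set up correctly.
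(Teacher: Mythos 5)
Your framework — realizing the compositions via isotopy-extension fibrations, identifying $\Embor(D,\interior{V_g})\simeq\Fror(\interior{V_g})\simeq\interior{V_g}\times\SO(2n+1)$, and reducing to the rotation class in $\pi_1\SO(2n+1)$ — is sound and is essentially the same as the paper's (the paper factors the same analysis through the evaluation map $\Diffor(V_g)\to\interior{V_g}$ followed by the derivative map $\Diffor(V_g,*)\to\SO(2n+1)$). The factorization through $\pi_0\Diffor(V_g,D)$ via $\pi_1 X=0$ is also fine. However, the crucial step is wrong.

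The ``damped rotation'' $\phi_t(x)=r_{t\rho(x)}(x)$ is a \emph{path}, not a loop: at $t=1$ one has $\phi_1(x)=r_{\rho(x)}(x)$, which on the transition region where $0<\rho(x)<1$ is a nontrivial rotation and certainly not the identity. So $\{\phi_t\}$ is a lift of the rotation loop starting at $\id$, and the long exact sequence only tells you that the connecting map sends the generator of $\pi_1\Embor\cong\bfZ/2$ to the class $[\phi_1]\in\pi_0\Diff_D(V_g)$; it does not tell you that this class is trivial. In fact the two questions are equivalent: $[\phi_1]=0$ in $\pi_0\Diff_D(V_g)$ if and only if the rotation loop lifts to a genuine loop in $\Diffor(V_g)$, if and only if $d_*\colon\pi_1\Diffor(V_g,*)\to\pi_1\SO(2n+1)$ is onto. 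Producing such a loop requires global geometric input about $V_g$, not a local cutoff — the same cutoff ``argument'' would apply verbatim to any parallelizable manifold, and there it is not automatic that the twist $\phi_1$ is isotopic to the identity rel $D$.

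The missing ingredient, supplied by the paper, is that $V_g=\natural^g(D^{n+1}\times S^n)$ carries a smooth $\SO(n)$-action with a fixed point whose tangential representation agrees with the restriction of the standard $(2n+1)$-dimensional representation to $\SO(n)\subset\SO(2n+1)$ (acting by rotating the $S^n$ factors along an axis and taking equivariant boundary connected sums). This produces a continuous section $\SO(n)\to\Diffor(V_g,*)$ of the derivative map, and since $\pi_1\SO(n)\to\pi_1\SO(2n+1)$ is onto for $n\ge 2$, one gets honest loops realizing the generator, killing the boundary map. Your proof needs this (or an equivalent global action) inserted precisely where you assert, incorrectly, that the damped rotation is a loop.
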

\begin{proof}

Up to isotopy, preserving a disc in the interior setwise is equivalent to preserving a point, so the group $\pi_0 \Diffor(V_g,D^{2n+1})$ agrees with the group of path components of the subgroup $\Diffor(V_g,*)\subset \Diffor(V_g)$ of diffeomorphisms that fix the centre $*\in D^{2n+1}\subset \interior{V_g}$. As $V_g$ is $(n-1)$-connected, the long exact sequence of the fibration $\Diffor(V_g)\ra \interior{V_g}$ given by evaluating diffeomorphisms at $*$ implies that $\pi_0 \Diffor(V_g,*)$ agrees with $\pi_0 \Diffor(V_g)$, so the second map in the first composition in the statement is an isomorphism. Taking derivatives at $*$ yields a homotopy fibre sequence of the form
\begin{equation}\label{restrictionfibration}\Diff_{D^{2n+1}}(V_g)\lra \Diffor(V_g,*)\xlra{d}\SO(2n+1)\end{equation}
and hence an exact sequence
\[\pi_1\SO(2n+1)\xlra{t}\pi_0\Diff_{D^{2n+1}}(V_g)\lra\pi_0 \Diffor(V_g,*)\lra 0.\] 
On the subgroup $\SO(n)\subset\SO(2n+1)$, the derivative map $d$ has a section since $V_g$ admits a smooth $\SO(n)$-action with fixed point $*\in V_g$ whose tangential representation agrees with the restriction of the standard representation to the subgroup $\SO(n)$: take the $g$-fold equivariant boundary connected sums of $D^{n+1}\times S^{n}$ with $\SO(n)$ acting by rotating $S^n$ along an axis. As $\SO(n)\subset\SO(2n+1)$ is $(n-1)$-connected, this section ensures that the long exact sequence on homotopy groups of \eqref{restrictionfibration} splits in degrees $*\le n-1$ into short exact sequences and shows in particular that $t$ is trivial for $n\ge2$, so $\pi_0\Diff_{D^{2n+1}}(V_g)\cong \pi_0 \Diffor(V_g)$ holds as claimed. Replacing \eqref{restrictionfibration} by the fibration sequence $\Diff_{D^{2n}}(V_g)\ra \Diffor(V_g,*)\ra\SO(2n)$ induced by taking the derivative at the centre $*\in D^{2n}\subset \partial V_g$  of the disc in the boundary, the proof of the claim regarding the maps in the second composition of the statement proceeds analogous to the first part of the proof. 
\end{proof}

\subsubsection{The homology action}\label{sec:homology-action}To obtain further information on the mapping class group $\pi_0\Diff_{D^{2n}}(V_g)$, we consider its action on the $n$th integral homology of $V_g$ and its boundary $\partial V_g=W_{g}$, which we abbreviate by \[H_{W_{g,1}}\coloneq \oH_n(W_{g,1};\bfZ)\quad \text{and}\quad H_{V_{g}}\coloneq \oH_n(V_{g};\bfZ).\] This action preserves further algebraic structures, such as the intersection pairing
\[\lambda \colon H_{W_{g,1}}\otimes H_{W_{g,1}}\lra\bfZ,\] which equips $H_{W_{g,1}}$ with a nondegenerate $(-1)^n$-symmetric form by Poincaré duality. In addition, any automorphism of $H_{W_{g,1}}$ induced by an orientation-preserving diffeomorphism of $W_{g,1}$ has to preserve the function
\[\alpha \colon H_{W_{g,1}}\lra \pi_{n}\BSO(n)\] given by representing a homology class by an embedded $n$-sphere and taking its normal bundle. Wall \cite[Thm 2]{Wall1} has shown that $\lambda$ and $\alpha$ satisfy the relations
\begin{enumerate}
\item $\lambda(x,x)=\partial_n \alpha(x)$ 
\item\label{item:ii-alpha}  $\alpha(x+y)=\alpha(x)+\alpha(y)+\lambda(x,y)\cdot \tau_{S^n}$ 
\end{enumerate}
as long as $n\ge3$, where \[\partial_n\colon \pi_n\BSO(n)\lra\pi_{n-1}S^{n-1}\cong\bfZ\] is induced by the fibration $S^{n-1}\ra \BSO(n-1)\ra\BSO(n)$ and $\tau_{S^n}\in \pi_n\BSO(n)$ is the class representing the tangent bundle of the $n$-sphere. In sum, we arrive at a morphism \[\pi_0\Diff(W_{g,1})\lra G_g\coloneq \Aut(H_{W_{g,1}},\lambda,\alpha)\] to the subgroup $G_g\subset \GL(H_{W_g})$ of automorphisms preserving $\lambda$ and $\alpha$, which is surjective by \cite[Lem.\,10]{Wall2}. However, we are interested in the mapping class group $\pi_0\Diff_{D^{2n}}(V_g)$ and not every automorphism in $G_g$ is realised by a diffeomorphism of $W_{g,1}$ that extends to one of $V_g$; it would at least have to preserve the Lagrangian subspace \[K_g\coloneq \ker\big(H_{W_{g,1}}\ra H_{V_{g}}\big),\] so there is a canonical map $\pi_0\Diff_{D^{2n}}(V_g)\ra G_g^{\ext}$ to the subgroup \[G_g^{\ext}\coloneq\{\Phi\in G_g\mid \Phi(K_g)\subset K_g\}\] of automorphisms that preserve this Lagrangian, given by acting on the homology of the boundary. Using   the canonical isomorphism $H_{W_{g,1}}/K_g\cong H_{V_g}$, the subgroup $G^{\ext}_g$ maps further to $\GL(H_{V_g})$. The resulting composition
\begin{equation}\label{equ:homologyaction}\pi_0\Diff_{D^{2n}}(V_g)\lra G_g^{\ext}\lra \GL(H_{V_g})\end{equation} agrees with the action on the homology of $V_g$ and one may ask for the (co)kernel of the three maps involved. Extending work of Wall \cite{Wall2,Wall3}, we express the answer in \cref{thm:mcg} below in terms of an exact sequence of $\GL(H_{V_g})$-modules
\begin{equation}\label{equ:sesGLmodules}0\lra H_{V_g}^{\vee}\otimes S\pi_n\SO(n)\lra N_g\lra M_g\lra 0,\end{equation}
where
\begin{enumerate}[label=(\arabic*)]
\item $S\pi_n\SO(n)\subset \pi_n\SO(n+1)$ is the image of the stabilisation $\pi_n\SO(n)\ra\pi_n\SO(n+1)$,
\item\label{item:ii} $M_g\subset (H_{V_g}\otimes H_{V_g})^{\vee}$ is the submodule of bilinear forms $\mu\in (H_{V_g}\otimes H_{V_g})^{\vee}$ that are 
\begin{itemize}\item $(-1)^{n+1}$-symmetric and satisfy
\item $\mu(x,x)\in \im(\partial_{n+1}\colon \pi_{n}\SO(n+1)\ra\bfZ)$ for $x\in H_{V_g}$, and 
\end{itemize}
\item $N_g\subset M_g\oplus (\pi_n\SO(n+1))^{H_{V_g}}$ is the submodule of pairs $(\mu,\beta)$ of a bilinear form $\mu\in M_g$ and a function $\beta\colon H_{V_g}\ra \pi_n\SO(n+1)$ that fulfil the conditions 
\begin{itemize}\item$\mu(x,x)=\partial_{n+1}\beta(x)$ for $x\in H_{V_g}$ and
\item $\beta(x+y)=\beta(x)+\beta(y)+\mu(x,y)\cdot \tau_{S^{n+1}}$ for $x,y\in H_{V_g}$,\end{itemize}
\end{enumerate}
all equipped with the evident $\GL(H_{V_g})$-action through $H_{V_g}$. Here we denoted the integral dual of a $G$-module $M$ by $M^\vee\coloneq\Hom(M,\bfZ)$. 

\begin{rem}\label{rem:matrixdescriptionMg}The image of the map $\partial_{n+1}\colon \pi_{n}\SO(n+1)\ra \pi_nS^n\cong\bfZ$ is generated by the order of the tangent bundle $\tau_{S^n}\in\pi_{n-1}\SO(n)$, so we have (see \cite[§1B)]{Levine})
\begin{equation}\label{equ:casedistinction}\im(\partial_{n+1})=
\begin{cases}
0&\mbox{for }n\text{ even}\\
\bfZ&\mbox{for }n=1,3,7\\
2\cdot \bfZ&\mbox{otherwise},
\end{cases}\end{equation}
which exhibits the condition $\mu(x,x)\in\im(\partial_{n+1})$ in \ref{item:ii} as vacuous unless $n\neq1,3,7$ is odd. Moreover, this shows that, after choosing a basis $H_{V_g}\cong \bfZ^{g}$, the module $M_g$ can be described equivalently in terms of $(-1)^{n+1}$-symmetric integral $(g\times g)$-matrices, with even diagonal entries if $n\neq1,3,7$ is odd.
\end{rem}

\begin{thm}\label{thm:mcg}Let $n\ge3$.
\begin{enumerate}
\item The action of $\pi_0\Diff_{D^{2n}}(V_g)$ on $H_{V_g}=\oH_n(V_g;\bfZ)$ gives rise to an extension 
\[0\lra N_g\lra \pi_0\Diff_{D^{2n}}(V_g)\lra \GL(H_{V_g})\lra 0.\]
\item The morphism $G_g^{\ext}\ra \GL(H_{V_g})$ fits into an extension of the form
\[0\lra M_g\lra G^{\ext}_g\lra \GL(H_{V_g})\lra 0.\]
\item The action of $\pi_0\Diff_{D^{2n}}(V_g)$ on $H_{W_{g,1}}=\oH_n(W_{g,1};\bfZ)$ induces an extension
\[0\lra H_{V_g}^\vee\otimes S\pi_n\SO(n)\lra \pi_0\Diff_{D^{2n}}(V_g)\lra G^{\ext}_g\lra 0.\]
\end{enumerate}
Moreover, the induced outer actions of these extensions is as specified above and the second extension admits a preferred splitting.
\end{thm}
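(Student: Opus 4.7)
The plan is to establish parts (ii) and (iii) first; part (i) then follows by combining them and matching definitions.

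For part (ii), the argument is essentially algebraic. Since $K_g$ is Lagrangian for the non-degenerate intersection form $\lambda$, the form induces a canonical identification $K_g\cong H_{V_g}^\vee$, where $H_{V_g}$ is the quotient $H_{W_{g,1}}/K_g$. Any $\Phi\in G_g^{\ext}$ preserves the filtration $0\subset K_g\subset H_{W_{g,1}}$, and $\lambda$-compatibility forces its action on $K_g$ to be the inverse transpose of the induced action on $H_{V_g}$. A basis of $H_{V_g}$ adapted to the standard handle decomposition of $V_g$ singles out a complementary Lagrangian $L_g\subset H_{W_{g,1}}$, which yields the preferred splitting of $G_g^{\ext}\to \GL(H_{V_g})$. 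The kernel consists of automorphisms of the form $x\mapsto x+\mu(\bar{x})$ for some $\mu\in \Hom(H_{V_g},K_g)\cong (H_{V_g}\otimes H_{V_g})^\vee$; $\lambda$-compatibility forces $\mu$ to be $(-1)^{n+1}$-symmetric, and $\alpha$-compatibility, via Wall's relation $\lambda(x,x)=\partial_n\alpha(x)$, forces $\mu(x,x)\in\im\partial_{n+1}$, identifying the kernel with $M_g$.

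For part (iii), we would begin from Wall's computation \cite{Wall2,Wall3} of $\pi_0\Diff_\partial(W_{g,1})$, which yields surjectivity onto $G_g$ with kernel generated by twists along embedded $n$-spheres and parametrized by a quotient of $H_{W_{g,1}}\otimes S\pi_n\SO(n)$. To obtain surjectivity of $\pi_0\Diff_{D^{2n}}(V_g)\to G_g^{\ext}$, we would lift $\Phi\in G_g^{\ext}$ to $W_{g,1}$ and, using that $\Phi$ preserves the homology classes of the attaching $n$-spheres of the $(n+1)$-handles of $V_g$ (because it preserves $K_g$), invoke Haefliger's isotopy theorem to arrange setwise preservation of those spheres; the diffeomorphism then extends across the handles. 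For the kernel, we would argue that a twist along an embedded sphere in $W_{g,1}$ extends to $V_g$ precisely when the sphere bounds a disc in $V_g$, i.e.\ lies in $K_g$; under $K_g\cong H_{V_g}^\vee$ these extendable twists assemble into $H_{V_g}^\vee\otimes S\pi_n\SO(n)$, with the stabilization $\pi_n\SO(n)\to\pi_n\SO(n+1)$ arising because the bounding disc provides an additional normal direction.

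For part (i), the composition $\pi_0\Diff_{D^{2n}}(V_g)\to G_g^{\ext}\to\GL(H_{V_g})$ has a two-step kernel with successive quotients $H_{V_g}^\vee\otimes S\pi_n\SO(n)$, $M_g$, and $\GL(H_{V_g})$; an element of the full kernel is encoded by a pair $(\mu,\beta)$ consisting of a bilinear form $\mu\in M_g$ recording its action on $L_g$ and twist data $\beta\colon H_{V_g}\to\pi_n\SO(n+1)$. Wall's relations between $\lambda$ and $\alpha$ translate directly into the two compatibility conditions defining $N_g$, yielding the extension of (i) with the evident $\GL(H_{V_g})$-action on each graded piece. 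The main obstacle is the identification of the kernel in (iii): pinning down precisely which twists on $W_{g,1}$ survive extension across the $(n+1)$-handles of $V_g$ and why the parameter group is the stable summand $S\pi_n\SO(n)\subset\pi_n\SO(n+1)$ rather than the full $\pi_n\SO(n+1)$. This requires delicate bookkeeping of normal-bundle data via the Wall relations.
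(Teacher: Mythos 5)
Your proposal runs in the opposite direction to the paper's proof, and the reversal matters. The paper establishes part~(i) first, importing Wall's results essentially verbatim: surjectivity of $\pi_0\Diff_{D^{2n}}(V_g)\to\GL(H_{V_g})$ is \cite[Lem.\,10]{Wall2}, the kernel is identified with homotopically trivial diffeomorphisms, and Wall's invariant $(\mu_\varphi,\beta_\varphi)\in N_g$ from \cite[p.\,298]{Wall3} together with additivity and exhaustion \cite[Lem.\,12--13]{Wall2} gives the isomorphism $\ker\cong N_g$ at once. Parts~(ii) and~(iii) then fall out of a diagram chase, with \cite[Lem.\,24]{Wall3} providing the one nontrivial input: that the composite $N_g\to\ker(G_g^{\ext}\to\GL(H_{V_g}))\hookrightarrow M_g$ equals the projection $N_g\to M_g$, hence is surjective. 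Your route is (ii) and (iii) first, then (i). Your algebraic proof of~(ii) is sound and arguably more self-contained than the paper's: once one checks that for $\mu\in M_g$ the map $x\mapsto x+\mu(\bar x)$ preserves $\alpha$ (this uses that $\alpha$ vanishes on the Lagrangian $K_g$ together with $\mu(x,x)\in\im\partial_{n+1}=\mathrm{ord}(\tau_{S^n})\cdot\bfZ$), the splitting from the standard handle basis gives surjectivity onto $\GL(H_{V_g})$ without invoking~(i). This is a genuine simplification over the paper's appeal to Wall's Lemma~24.

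The gap is in part~(iii), and you have already named it yourself. Your surjectivity sketch needs more than setwise preservation of the attaching $n$-spheres: the extension across the $(n+1)$-handles requires matching the handle framings, and while $\alpha$-preservation constrains the isomorphism type of the normal bundles it does not by itself produce a compatible trivialization. More seriously, the kernel identification is where the content of Wall's $(\mu,\beta)$-calculus lives. A diffeomorphism of $V_g$ trivial on $H_{W_{g,1}}$ need not restrict to a boundary diffeomorphism that is literally a composition of twists, and conversely extendable twists on $W_{g,1}$ only see part of the kernel; controlling this requires precisely the normal-bundle bookkeeping that Wall packages into the relations $\lambda(x,x)=\partial_{n+1}\beta(x)$ and $\beta(x+y)=\beta(x)+\beta(y)+\mu(x,y)\tau_{S^{n+1}}$, and hence into the definition of $N_g$. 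Finally, even granting~(iii), your deduction of~(i) shows that the kernel of $\pi_0\Diff_{D^{2n}}(V_g)\to\GL(H_{V_g})$ sits in an extension of $M_g$ by $H_{V_g}^\vee\otimes S\pi_n\SO(n)$, but does not by itself identify that extension with the specific subgroup $N_g\subset M_g\oplus(\pi_n\SO(n+1))^{H_{V_g}}$; the $\beta$-coordinate and the two constraints tying $\mu$ to $\beta$ have to be extracted, which is exactly Wall's contribution. Citing Wall's $N_g$-theory directly, as the paper does, is not a shortcut around this bookkeeping but a way of importing it wholesale.
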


\begin{rem}
For a complete description of $\pi_0\Diff_{D^{2n}}(V_g)$, one still needs to determine the extension problems of the first or third part of the theorem, which we do not pursue at this point. Similar extensions by Kreck \cite{Kreck} describing the closely related mapping class group $\pi_0\Diff_\partial(W_{g,1})$ for $n\ge3$ have been resolved in \cite{KrannichMCG} for $n$ odd.
\end{rem}

\begin{proof}[Proof of \cref{thm:mcg}]
We begin with three preparatory remarks.
\begin{enumerate}[label=(\arabic*)]
\item Results of Wall we shall use rely on are phrased in terms of pseudoisotopy instead of isotopy, but these notions agree in our situation by Cerf's work \cite{Cerf}.
\item Justified by \cref{noboundary}, we do not distinguish between seemingly different variants of $\pi_0\Diff_{D^{2n}}(V_g)$ fixing various discs point- or setwise.
\item We identify $K_g$ canonically with the dual ${H_{V_g}}^\vee$ as a $G^{\ext}_g$-module via the isomorphism induced by the form $\lambda$, and dually $H_{V_g}$ with $K_g^\vee$.
\end{enumerate}
Wall \cite[Lem.\,10]{Wall2} showed that the action $\pi_0\Diff_{D^{2n}}(V_g)\ra \GL(H_{V_g})$ is surjective and identified its kernel with those isotopy classes of diffeomorphisms $\varphi$ that are homotopic to the identity. Moreover, in \cite[p.\,298]{Wall3}, he defined a complete obstruction $(\mu_\varphi,\beta_\varphi)\in N_g$ for such a homotopically trivial diffeomorphism to be isotopic to the identity. By \cite[Lem.\,12--13]{Wall2}, these obstructions are additive and exhaust $N_g$, so the resulting function
\begin{equation}\label{equ:Ngkernel}\ker\big(\pi_0\Diff(V_g)\ra\GL(H_{V_g})\big)\lra N_g\end{equation} is an isomorphism of groups, which establishes the first of the three claims. To demonstrate the second, note that, as automorphisms in $G_g^{\ext}$ preserve the form $\lambda$, the composition
\[G^{\ext}_g\lra \GL(H_{V_g})\xlra{((-)^{-1})^{\vee}}\GL(K_g)\] agrees with the restriction to $K_g$. In particular, this shows that the kernel of the first map in this composition acts trivially on $K_g$, so there is a canonical monomorphism
\[\ker(G^{\ext}_g\ra \GL(H_{V_g}))\xlrahook{\Psi}\Hom(H_{V_g},K_g)\cong (H_{V_g}\otimes H_{V_g})^{\vee}\] defined by sending $\phi\in G_g^{\ext}$ to the linear map $\Psi(\phi)\colon H_{V_g}\ra K_g$ induced by the difference $(\phi-\id)\colon H_{W_{g,1}}\ra K_g$. This leaves us with identifying the image of $\Psi$ with $M_g$ for which it is helpful to note that a morphism $f\in \Hom(H_{V_g},K_g)$ lies in the subspace \[M_g\subset (H_{V_g}\otimes H_{V_g})^\vee\cong\Hom(H_{V_g},K_g)\] if and only if $f^{\vee}=(-1)^{n+1}f$ and $\lambda(f(x),\tilde{x})\in \im(\partial_{n+1})$ holds for all $x\in H_{V_g}$. Here $\tilde{x}\in H_{W_{g,1}}$ is a choice of preimage of $x$ under the projection $H_{W_{g,1}}\ra H_{V_g}$, but the value $\lambda(f(x),\tilde{x})$ is independent of this choice $\tilde{x}$ since $K_g$ is Lagrangian. For elements of the form $\Psi(\phi)$, the first property $\Psi(\phi)^{\vee}=(-1)^{n+1}\Psi(\phi)$ follows from the fact that $\phi$ preserves the form $\lambda$. To see the second, we note that, since $K_g$ is Lagrangian, the function $\alpha$ is additive on $K_g$, so it vanishes on it since the images of the second $S^n$-factors of the connected sum $W_{g,1}\cong \sharp^g(S^n\times S^n)\backslash D^{n}$ in $V_g=\natural^g (D^{n+1}\times S^n)$ induce a basis of $K_g$ and have trivial normal bundle. Using the fact that $\phi\in G_g^{\ext}$ preserves $\alpha$ and property \ref{item:ii-alpha} of $\alpha$, we compute \[\alpha(\tilde{x})=\alpha\big(\Psi(\phi)(x)+\tilde{x}\big)=\alpha\big(\Psi(\phi)(x)\big)+\alpha(\tilde{x})+\lambda\big(\Psi(\phi)(x),\tilde{x}\big)\cdot \tau_{S^n}=\alpha(\tilde{x})+\lambda\big(\Psi(\phi)(x),\tilde{x}\big)\cdot \tau_{S^n}\] and conclude that $\lambda\big(\Psi(\phi)(x),\tilde{x}\big)\cdot \tau_{S^n}$ vanishes, so $\lambda\big(\Psi(\phi)(x),\tilde{x}\big)\in \im(\partial_{n+1})$ holds as claimed. This proves that the image of $\Psi$ is contained in $M_g$, and to show that it agrees with it, we consider the commutative diagram
\[
\begin{tikzcd}[row sep=0.5cm]
0\rar&N_g\dar\rar&\pi_0\Diff(V_g)\dar\rar&\GL(H_{V_{g}})\arrow[d,equal]\rar&0\\
0\rar&{\ker(G_g^{\ext}\ra\GL(H_{V_g}))}\rar\arrow[d,hook,"\Psi",swap]&G_g^{\ext}\rar&\GL(H_{V_{g}})\rar&0\\
&M_g&&&&
\end{tikzcd}
\] whose vertical arrow $N_g\ra \ker(G_g^{\ext}\ra\GL(H_{V_g}))$ is induced by the isomorphism \eqref{equ:Ngkernel}. By \cite[Lem.\,24]{Wall3}, the vertical composition in the diagram agrees with the projection $N_g\ra M_g$ in \eqref{equ:sesGLmodules}, so it is surjective. Consequently, $\Psi$ is surjective as well and hence an isomorphism, which proves the second claim of the statement. To show the third, we observe that, given that $\Psi$ is an isomorphism and $N_g\ra M_g$ is surjective, the diagram implies that $\pi_0\Diff(V_g)\ra G_g^{\ext}$ is surjective and moreover that its kernel agrees with the kernel of $N_g\ra M_g$, which is $H_{V_g}^\vee\otimes S\pi_n\SO(n)$ as claimed.

We now identify the actions as asserted. For the second extension, one can argue as follows: an automorphism $\varphi\in \GL(H_{V_g})$ acts on $\ker(G_g^{\ext}\ra\GL(H_{V_g}))$ by conjugating with a choice of lift $\widetilde{\varphi}\in G_g^{\ext}$, so the isomorphism $\Psi$ is equivariant simply because of the identity $(\widetilde{\varphi}(\phi-\id)\widetilde{\varphi}^{-1})=(\widetilde{\varphi}\phi\widetilde{\varphi}^{-1}-\id)$ in $\Hom(H_{V_g},K_g)$ for all $\phi\in\Hom(H_{V_g},K_g)$. For the first extension, we use that the left vertical composition in the diagram above agrees with the projection $N_g\ra M_g$, so it suffices to show that the composition
 \begin{equation}\label{equ:testmiddlespheres}\ker(\pi_0\Diff_{D^{2n}}(V_g)\ra\GL(H_{V_g}))\xlra{\cong} N_g\lra (\pi_n\SO(n+1))^{H_{V_g}}\end{equation} of \eqref{equ:Ngkernel} with the projection is equivariant. From Wall's definition \cite[p.\,267]{Wall2} of the invariant $\beta_\varphi$ of an element $\varphi$ in the kernel, we see that this composition can be described as follows: representing a homology class $[e]\in H_{V_g}$ by an embedded sphere $e\colon S^n\rightarrow V_g$, we can alter $\varphi$ by an isotopy such that it preserves $e$ pointwise. In this case, the derivative of $\varphi$ restricts an automorphism of the normal bundle $\nu(e)\cong\varepsilon^{n+1}$ which induces an element $\beta_\varphi([e])\in\pi_n\SO(n+1)$. This uses a trivialisation of $\nu(e)$, but the resulting element $\beta_\varphi([e])$ is independent of this choice. From this description, the claimed equivariance is straight-forward to check, and the identification of the action of the last sequence follows from that of the first two by chasing through the diagram obtained by extending the diagram above by taking vertical kernels.
 
To see that the second extension splits, note that the second $S^n$-factors in the connected sum decomposition of $W_{g,1}=\sharp^gS^n\times S^n\backslash\interior{D^{2n}}\subset \natural^g D^{n+1}\times S^n=V_g$ induce a splitting of the canonical map $H_{W_{g,1}}\ra H_{V_g}$ and thus an isomorphism of the form $K_g\oplus H_{V_g}\cong H_{W_{g,1}}$. Using this splitting of the homology, we can define a morphism $\GL(H_{V_g})\ra G_{g}^{\ext}$ by assigning $\phi\in \GL(H_{V_g})$ the automorphism
\[H_{W_{g,1}}\cong K_g\oplus H_{V_g}\xra{(\phi^{-1})^{\vee}\oplus \phi}K_g\oplus H_{V_g}\cong H_{W_{g,1}},\] which clearly preserves $K_g$. Moreover, its induced automorphism of $H_{W_{g,1}}$ agrees with $\phi$ by construction, so we obtain a splitting as desired. 
\end{proof}

\begin{rem}With respect to the basis $H_{W_{g,1}}\cong\bfZ^{2g}$ suggested by the connected sum decomposition $W_{g,1}\cong\sharp^g(S^n\times S^n)\backslash\interior{D^{2n}}$, the subgroup $G_g^{\ext}\subset \GL(H_{W_{g,1}})\cong \GL_{2g}(\bfZ)$ agrees with the group of block matrices of the form \[\left(\begin{matrix} A&M\\ 0 &(A^{-1})^T\end{matrix}\right)\] with $M\in M_g$, using the matrix description of $M_g$ explained in \cref{rem:matrixdescriptionMg}. From this point of view, the splitting $\GL_g(\bfZ)\ra G_g^{\ext}$ described in the proof of \cref{thm:mcg} is the obvious one that sends a matrix $A\in\GL_g(\bfZ)$ to the block diagonal matrix with $M=0$.
\end{rem}

\subsection{Stable framings}
\label{sect:stableframedmcg}Choosing the canonical map $\sfr\colon \EO\ra\BO$ as a stable tangential structure in the sense of \cref{sec:stable-tangential-structures}, the space of $\sfr$-structures 
\[\Bun_{D^{2n}}(\tau_{V_g}^s,\sfr^*\gamma;\ell_0)\]
as defined in that section is the space of stable framings of $V_g$ relative to a fixed stable framing $\ell_0\colon \tau_{V_g}^s|_{D^{2n}}\ra \sfr^*\gamma$. We denote the stabiliser of the canonical action of $\pi_0\Diff_{D^{2n}}(V_g)$ on the set of components $\pi_0\Bun_{D^{2n}}(\tau_{V_g}^s,\sfr^*\gamma;\ell_0)$ and its image in $G_g^{\ext}$ by \[\pi_0\Diff_{D^{2n}}(V_g)_\ell\subset\pi_0\Diff_{D^{2n}}(V_g)\quad\text{and}\quad G_{g,\ell}^{\ext}\subset G_g^{\ext}.\]
Note that $\pi_0\Diff_{D^{2n}}(V_g)_\ell$ agrees with the image of the canonical  map $\BDiff^{\sfr}_{D^{2n}}(V_g)\ra\BDiff_{D^{2n}}(V_g)$ on fundamental groups based at the point induced by the stable framing $\ell$, or equivalently, with the kernel of the crossed homomorphism
\begin{equation}\label{equ:crossedhom}\pi_0\Diff_{D^{2n}}(V_g)\lra H_{V_g}^\vee \otimes \pi_n\SO\end{equation} given by acting on $\ell$. This uses the identification \[H_{V_g}^\vee \otimes \pi_n\SO\cong\Hom(H_{V_g},\pi_n\SO)\cong\pi_0\Maps_{D^{2n}}(V_g,\SO)\cong\pi_0\Bun_{D^{2n}}(\tau_{V_g}^s,\theta^*\gamma;\ell_0)\] whose first two isomorphisms are the evident ones and whose third is induced by the choice of stable framing $\ell$, using that $\pi_0\Bun_{D^{2n}}(\tau_{V_g}^s,\sfr^*\gamma;\ell_0)$ is a torsor over the group $\pi_0\Maps_{D^{2n}}(V_g,\SO)$ equipped with the pointwise multiplication.

\begin{rem}\label{rem:unstableframings}As the stabilisation map $\SO(2n+1)\ra \SO$ is $2n$-connected, the induced map $\Maps_{D^{2n}}(V_g,\SO(2n+1))\ra \Maps_{D^{2n}}(V_g,\SO)$ is $n$-connected, which implies that there is no difference between equivalence classes of stable and unstable framings, so the discussion of this subsection applies equally well to unstable framings instead of stable ones.
\end{rem}

To relate the subgroups $\pi_0\Diff_{D^{2n}}(V_g)_\ell\subset \pi_0\Diff_{D^{2n}}(V_g)$ and $G_{g,\ell}^{\ext}\subset G_{g}^{\ext}$ to the sequences established in \cref{thm:mcg}, we define the $\GL(H_{V_g})$-submodule
\begin{enumerate}[label=(\arabic*)]
\item $N_g^{\sfr}\subset N_g$ as the intersection of $N_g$ with $M_g\oplus\langle\tau_{S^{n+1}} \rangle^{H_{V_g}}$, where $\langle\tau_{S^{n+1}} \rangle$ is the subgroup of $\pi_n\SO(n+1)$ generated by the tangent bundle $\tau_{S^{n+1}}$ and 
\item $M_g^{\sfr}\subset M_g$ as the collection of $(-1)^{n+1}$-symmetric bilinear forms $\mu\in (H_{V_g}\otimes H_{V_g})^{\vee}$ that are even, i.e.\,$\mu(x,x)\in2\cdot\bfZ$ for all $x\in H_{V_g}$, which is automatic if $n$ is even.
\end{enumerate}
Standard arguments involving the long exact sequences in homotopy groups of the usual fibration $\SO(d)\ra\SO(d+1)\ra S^d$ (cf.\,\cite[§1B)]{Levine}) show that the sequence \eqref{equ:sesGLmodules} restricts to an exact sequence of $\GL(H_{V_g})$-modules of the form
\[0\lra H_{V_g}^\vee\otimes \ker\big(S\pi_n\SO(n)\ra\pi_n\SO\big)\lra N_g^{\sfr}\lra M_g^{\sfr}\lra 0.\]

\begin{prop}\label{prop:mcgframed}Let $n\ge3$.
\begin{enumerate}
\item The action of $\pi_0\Diff_{D^{2n}}(V_g)$ on the set $\pi_0\Bun_{D^{2n}}(\tau_{V_g}^s,\sfr^*\gamma;\ell_0)$ of equivalence classes of stable framings is transitive. 
\item\label{prop:extensions-stably-framed-mcg} For any stable framing $\ell\colon \tau_{V_g}^s\ra \sfr^*\gamma$, the sequences of \cref{thm:mcg} restrict to exact sequences of the form
\begin{equation*}
\begin{gathered}
0\lra N_g^{\sfr}\lra \pi_0\Diff_{D^{2n}}(V_g)_\ell\lra \GL(H_{V_g})\lra 0,\\
0\lra M_g^{\sfr}\lra G_{g,\ell}^{\ext}\lra \GL(H_{V_g})\lra 0,\text{ and}\\
0\lra H_{V_g}^\vee\otimes \ker\big(S\pi_n\SO(n)\ra\pi_n\SO\big)\lra \pi_0\Diff_{D^{2n}}(V_g)_\ell\lra G_{g,\ell}^{\ext}\lra 0.
\end{gathered}
\end{equation*}
\end{enumerate}
\end{prop}

\begin{rem}\label{rem:arithmeticdifferenceframed}The calculation of $\im(\partial_{n+1})$ in \cref{rem:matrixdescriptionMg} shows that the inclusion $M_g^{\sfr}\subset M_g$ is an equality for $n\neq1,3,7$, so the same holds for $G_{g,\ell}^{\ext}\subset G^{\ext}_g$ as a result of \cref{prop:mcgframed}.\end{rem}

\begin{proof}[Proof of \cref{prop:mcgframed}]
The first sequence of \cref{thm:mcg} fits into a diagram
\[
\begin{tikzcd}[row sep=0.5cm]
0\rar&N_g\arrow[dr, bend right=20]\rar&\pi_0\Diff_{D^{2n}}(V_g)\rar\dar{\tau}&\GL(H_{V_g})\rar&0\\
&&H_{V_g}^\vee\otimes \pi_n\SO&&
\end{tikzcd}
\]
where $\tau$ is the crossed homomorphism \eqref{equ:crossedhom}, which is up to isomorphism given by the action of $\pi_0\Diff_{D^{2n}}(V_g)$ on $\ell$, so the first part of the statement is equivalent to the surjectivity of $\tau$. The diagonal arrow is the morphism which assigns an element $(\mu,\beta)\in N_g$ the composition of $\beta\colon H_{V_g}\ra \pi_n\SO(n+1) $ with the stabilisation map $\pi_n\SO(n+1)\ra \pi_n\SO$. As $\tau_{S^{n+1}}\in \pi_n\SO(n+1)$ is stably trivial, it follows from the second defining property of $N_g$ that this composition is additive, so indeed defines an element of ${H_{V_g}}^\vee\otimes \pi_n\SO\cong\Hom(H_{V_g},\pi_n\SO)$. As a next step, observe that the diagonal map is surjective, since the projection
\[N_g\subset M_g\oplus (\pi_n\SO(n+1))^{H_{V_g}}\ra(\pi_n\SO(n+1))^{H_{V_g}}\] has a section over the subspace of linear maps ${H_{V_g}}^\vee\otimes \pi_n\SO(n+1)\subset (\pi_n\SO(n+1))^{H_{V_g}}$ by setting the $M_g$-coordinate to zero and because the stabilisation map $\pi_n\SO(n+1)\ra \pi_n\SO$ is surjective. This reduces the first claim of the statement to the commutativity of the triangle, which follows from the geometric description of the composition \eqref{equ:testmiddlespheres} we gave in the proof of \cref{thm:mcg} in a straight-forward manner.

The kernel of the vertical map agrees with the stabiliser $\pi_0\Diff_{D^{2n}}(V_g)_\ell$, so the surjectivity of the diagonal arrow in the diagram also shows that this stabiliser surjects onto $\GL(H_{V_g})$ and that the kernel of the restriction $\ker(\tau)\ra\GL(H_{V_g})$ agrees with the kernel of the diagonal arrow. But this kernel is exactly the submodule $N_g^{\sfr}\subset N_g$, because the kernel of the stabilisation map $\pi_n\SO(n+1)\ra \pi_n\SO$ agrees with the kernel of $\pi_n\SO(n+1)\ra\pi_n\SO(n+2)$, i.e.\,the subgroup generated by $\tau_{S^{n+1}}$. This establishes the first sequence of the second claim. For the second, note that the surjectivity of the morphism \[\pi_0\Diff_{D^{2n}}(V_g)_\ell\lra \GL(H_{V_g})\]  implies that also the morphism \[G_{g,\ell}^{\ext}\lra \GL(H_{V_g})\] is surjective. Comparing the first two sequences of \cref{thm:mcg}, we see that its kernel agrees with the image of $N_g^{\sfr}$ in $M_g$ under the projection $N_g\ra M_g$, i.e.\,with those bilinear forms $\mu\in M_g$ that satisfy $\mu(x,x)\in \im(\partial_{n+1}\langle \tau_{S^{n+1}}\rangle)$. The image of $\tau_{S^{n+1}}\in \pi_n\SO(n+1)$ under $\partial_{n+1}$ is the Euler characteristic of $S^{n+1}$ (see \cite[§1B)]{Levine}), so the kernel in question agrees with $M_g^{\sfr}$ as claimed. To establish the last sequence, one first observes that \[\pi_0\Diff_{D^{2n}}(V_g)_\ell\lra G_{g,\ell}^{\ext}\] is surjective by definition of the target, and then compares the first two sequences in the claim to see that its kernel agrees with the kernel of $N_g^{\sfr}\ra M_g^{\sfr}$, which agrees with $H_{V_g}^\vee\otimes \ker(S\pi_n\SO(n)\ra\pi_n\SO)$ as already noted in the discussion prior to this proof.
\end{proof}

\begin{lem}\label{lem:negativeidentity}The negative of the identity $-\id\in \GL(H_{W_g})$ is contained in the subgroup $G_{g,\ell}^{\ext}\subset\GL(H_{W_g})$ for all stable framings $\ell\colon\tau_{V_g}^s\ra \sfr^*\gamma$.
\end{lem}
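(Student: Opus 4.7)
The plan is to combine the canonical splitting from \cref{thm:mcg} with a geometric realisation of $-\id_{H_{W_g}}$ by an orientation-preserving isometric involution of $V_g$.

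First, observe that $-\id_{H_{W_g}}$ automatically lies in $G_g^{\ext}$: the splitting $\GL(H_{V_g}) \to G_g^{\ext}$ from \cref{thm:mcg} sends $\phi$ to the direct-sum automorphism $(\phi^{-1})^{\vee} \oplus \phi$ of $H_{W_{g,1}} \cong K_g \oplus H_{V_g}$, which at $\phi = -\id$ is precisely $-\id_{H_{W_g}}$. Preservation of $\lambda$, $\alpha$, and $K_g$ is therefore automatic, and the content of the lemma is to show that $-\id_{H_{W_g}}$ is realised by a diffeomorphism in $\pi_0\Diff_{D^{2n}}(V_g)_\ell$ rather than merely in $\pi_0\Diff_{D^{2n}}(V_g)$.

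Next, by the transitivity of the $\pi_0\Diff_{D^{2n}}(V_g)$-action on $\pi_0\Bun_{D^{2n}}(\tau^s_{V_g},\sfr^*\gamma)$ established in \cref{prop:mcgframed}(i), the stabilisers $G_{g,\ell}^{\ext}$ for different choices of $\ell$ are pairwise conjugate inside $G_g^{\ext}$. Since $-\id_{H_{W_g}}$ is central in $\GL(H_{W_g})$, its membership in $G_{g,\ell}^{\ext}$ is independent of $\ell$, reducing the claim to constructing, for some single convenient framing, a diffeomorphism of $V_g$ that fixes $D^{2n} \subset \partial V_g$ pointwise, acts as $-\id$ on $H_n(W_g)$, and preserves the given framing class.

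To produce such a diffeomorphism, I would embed $V_g$ into $\bfR^{2n+1}$ symmetrically under an orientation-preserving involution $R = r_1 \oplus r_2 \in \SO(2n+1)$, with $r_1$ and $r_2$ orthogonal reflections on complementary Euclidean subspaces of dimensions $n+1$ and $n$, arranged so that $R$ preserves each of the $g$ generating $n$-spheres of $V_g$ setwise and acts on it with degree $-1$. The induced diffeomorphism of $V_g$ is then orientation-preserving, acts as $-\id$ on $H_n(W_g)$, and preserves the stable framing $\ell_{\mathrm{std}}$ inherited from the standard Euclidean framing of $\bfR^{2n+1}$ because $R$ is an isometry. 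The main technical obstacle is that the fixed locus of $R$ on $\partial V_g$ has codimension two and therefore contains no disc; I would resolve this by a standard disc-surgery modification, replacing $R$ on a small ball $B \subset V_g$ in the complement of the fixed locus (with $B \cap R(B) = \emptyset$) by a compactly supported isotopy that makes the new diffeomorphism equal the identity on a neighbourhood of the reference disc in $\partial V_g$, and then verifying that neither the middle-dimensional homology action nor the stable framing class is affected by this modification.
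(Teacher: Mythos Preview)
Your overall strategy coincides with the paper's: reduce to a single convenient framing using transitivity and centrality, then exhibit an orientation-preserving diffeomorphism with constant derivative (hence framing-preserving) acting as $-\id$ on $H_n(W_g)$. The differences are in execution, and two of your steps have genuine gaps.

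First, your ``disc-surgery modification'' does not work as described. If $B\cap R(B)=\varnothing$ and you alter $R$ only on $B$, the resulting map still sends $\partial B$ into $R(B)$ (since it agrees with $R$ there), so it cannot fix a disc $D^{2n}\subset B$ while remaining a diffeomorphism. What is actually needed is a global ambient isotopy carrying $R(D^{2n})$ back to $D^{2n}$; this is exactly what the paper obtains from \cref{noboundary}, which gives $\pi_0\Diff_{D^{2n}}(V_g)\cong\pi_0\Diffor(V_g)$. Once you know that, you simply isotope your constant-derivative diffeomorphism to one fixing $D^{2n}$; the homology action and the stable framing class are isotopy invariants, so nothing is lost.

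Second, you do not explain how to realise the $R$-symmetric embedding of $V_g$ in $\bfR^{2n+1}$ for $g\ge 2$. Connecting $g$ symmetric copies of $V_1$ equivariantly is awkward precisely because the fixed locus of $R$ has codimension two and contains no boundary disc along which to form an equivariant boundary connected sum. The paper sidesteps this by working first with $g=1$, where it uses the very explicit map
\[
(x,y)\longmapsto\big((-x_1,x_2,\dots,x_{n+1}),(-y_1,y_2,\dots,y_{n+1})\big)
\]
on $D^{n+1}\times S^n\subset\bfR^{n+1}\times\bfR^{n+1}$, isotopes it to fix a disc via \cref{noboundary}, and only then takes iterated boundary connected sums along the now-fixed discs to reach arbitrary $g$. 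Your codimension-zero embedding into $\bfR^{2n+1}$ can be made to work for $g=1$ (tubular neighbourhood of $S^n\subset\bfR^{n+1}\subset\bfR^{2n+1}$), but the paper's choice in $\bfR^{2n+2}$ makes the verification that the map acts as $-\id$ on both generators of $H_n(W_1)$ immediate.
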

\begin{proof}
We first restrict to the case $g=1$ and a particular choice of stable framing, namely that induced by the standard embedding $D^{n+1}\times S^n\subset \bfR^{n+1}\times \bfR^{n+1}$. The diffeomorphism 
\begin{equation}\label{equ:explicitdiff}\mapnoname{D^{n+1}\times S^n}{D^{n+1}\times S^n}{((x_1,\ldots,x_{n+1}),(y_1,\ldots,y_{n+1}))}{((-x_1,x_2,\ldots,x_{n+1}),(-y_1,y_2,\ldots,y_{n+1}))}\end{equation} is orientation preserving, maps to $-\id\in \GL(H_{W_1})$, and has constant derivative, so it preserves the stable framing as required. It does not preserve a disc in the boundary, but by \cref{noboundary}, we can rectify this by an isotopy. To extend this argument to higher genera, we take the boundary connected sum of two copies of this diffeomorphism using the fixed discs in the boundary to obtain a diffeomorphism of $V_2$ whose image in $\GL(H_{W_2})$ is $-\id$, which we can again isotope so it preserves a disc as required. Continuing like this yields a sequence of isotopy classes in $\pi_0\Diff_{D^{2n}}(V_g)$ for all $g\ge0$ that satisfy the requirements of the claim. This establishes the statement for one specific stable framing for each $g$, but implies the general case, the reason being that $-\id\in \GL(H_{W_g})$ is central and all subgroups $G_{g,\ell}^{\ext}\subset G_g^{\ext}$ are conjugate, because the different stabilisers $\pi_0\Diff_{D^{2n}}(V_g)_\ell$ are, as the action is transitive by \cref{prop:mcgframed}. This concludes the proof.
\end{proof}

\subsection{The homotopy mapping class group}
Not only the smooth mapping class groups $\pi_0\Diff_{D^{2n}}(V_g)$ and $\pi_0\Diff_\partial(W_{g,1})$ act on the homology $H_{W_{g,1}}$, also their homotopical cousins $\pi_0\hAut_{D^{2n}}(V_g,W_{g,1})$ and $\pi_0\hAut_\partial(W_{g,1})$ do. Restricting these actions to the images \[\pi_0\hAut^{\cong}_{D^{2n}}(V_g,W_{g,1})\subset \pi_0\hAut_{D^{2n}}(V_g,W_{g,1})\quad\text{ and }\quad\pi_0\hAut^\cong_\partial(W_{g,1})\subset \pi_0\hAut_\partial(W_{g,1})\] of the canonical maps  \[\pi_0\Diff_{D^{2n}}(V_g,W_{g,1})\ra \pi_0\hAut_{D^{2n}}(V_g,W_{g,1})\quad\text{and}\quad \pi_0\Diff_\partial(W_{g,1})\ra\pi_0\hAut_\partial(W_{g,1}),\] they land in the above described subgroups $G_g^{\ext}$ and $G_g$ of $\GL(H_{W_{g,1}})$, respectively.

\begin{lem}\label{lem:finitekernel}Let $n\ge3$. The morphisms induced by the action on $H_{W_{g,1}}=\oH_n(W_{g,1};\bfZ)$
\[\pi_0\hAut_{\partial}^{\cong}(W_{g,1})\lra G_g\quad\text{ and }\quad\pi_0\hAut_{D^{2n}}^{\cong}(V_g,W_{g,1})\lra G_g^{\ext}\] are surjective. Moreover, their kernels are finite and $p$-torsion free as long as $n<2p-4$.
\end{lem}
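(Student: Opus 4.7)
The surjectivity assertions are immediate from the corresponding surjectivity of the smooth mapping class groups: by \cref{thm:mcg} and Wall's analogous result referenced just before it, the maps $\pi_0\Diff_{D^{2n}}(V_g)\to G_g^{\ext}$ and $\pi_0\Diff_\partial(W_{g,1})\to G_g$ are surjective, and both factor through $\pi_0\hAut^\cong$ by the very definition of the latter as the image of $\pi_0\Diff$.

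For the kernels, the strategy is obstruction-theoretic. Since $W_{g,1}$ and $V_g$ are $(n-1)$-connected manifolds that deformation-retract onto their $n$-skeletons $\bigvee^{2g}S^n$ and $\bigvee^g S^n$, the Hurewicz theorem together with the standard classification of self-equivalences of wedges of spheres gives $\pi_0\hAut(W_{g,1})\cong\GL_{2g}(\bfZ)$ and $\pi_0\hAut(V_g)\cong\GL_g(\bfZ)$. The restriction fibration
\[ \Maps_A(X,X)\lra\Maps(X,X)\lra \Maps(A,X), \]
applied to $(X,A)=(W_{g,1},\partial W_{g,1})$ and restricted to components of equivalences, produces an exact sequence of pointed sets in which the kernel of $\pi_0\hAut^\cong_\partial(W_{g,1})\to G_g\subset\GL_{2g}(\bfZ)$ embeds into a subquotient of $\pi_1\Maps(\partial W_{g,1},W_{g,1})$; via the evaluation fibration and simple-connectivity of $W_{g,1}$, this $\pi_1$ is a quotient of $\pi_{2n}(\bigvee^{2g}S^n)$. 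For the second map, the fibration from the excerpt $\hAut_\partial(V_g)\to\hAut_{D^{2n}}(V_g,W_{g,1})\to\hAut^{\ext}_\partial(W_{g,1})$, combined with the analogous analysis of $\pi_0\hAut^\cong_\partial(V_g)$ applied to $(V_g,\partial V_g)$, reduces the kernel to an extension of such subquotients involving $\pi_k(\bigvee^r S^n)$ for bounded $k\leq 2n+1$ and $r\in\{g,2g\}$.

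The final step is to apply the Hilton--Milnor theorem together with Serre's classical calculations. For $n\geq 3$ the decomposition $\pi_k(\bigvee^r S^n)\cong\bigoplus_\omega\pi_k(S^{n_\omega})$ involves only finitely many basic products $\omega$, each contributing a finite group $\pi_k(S^{n_\omega})$ by Serre's finiteness theorem (since $k\neq n_\omega$ in our range, with the single exceptional weight-three contribution $\pi_7(S^7)=\bfZ$ arising only at $n=3$ and being eliminated by the $\pi_0\hAut^\cong$-restriction to components hit by smooth diffeomorphisms, whose image is controlled by \cref{thm:mcg}). For the $p$-torsion bound, Serre's result that $\pi_k^s$ contains no $p$-torsion for $k<2p-3$, combined with the EHP sequence controlling unstable deviations, yields $p$-torsion freeness of each summand precisely when $n<2p-4$; the extra $-1$ relative to the naive bound $n\leq 2p-4$ is forced by the $V_g$-case where the relevant unstable group is $\pi_{2n+1}(S^n)$, stably agreeing with $\pi^s_{n+1}$ and requiring $n+1<2p-3$.

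The main obstacle will be pinning down the elimination of the exceptional weight-three contributions via the smooth constraint and verifying the $p$-local analysis carefully in the narrow unstable range where Serre's theorem is applied through EHP; once both are in hand, finiteness and $p$-torsion freeness of all relevant summands yields the conclusion for the kernels.
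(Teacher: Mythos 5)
Your overall strategy matches the paper's: surjectivity is inherited from the smooth mapping class groups via \cref{thm:mcg}, and the kernels are controlled through evaluation/restriction fibre sequences that reduce everything to homotopy groups of bouquets of $n$-spheres, where Hilton--Milnor plus Serre gives finiteness and $p$-torsion-freeness in the stated range. The paper's actual reduction for the second kernel goes through the fibration $\hAut_\partial(V_g)\to\hAut_*(V_g)\to\Maps_*(W_g,V_g)$ and then applies $\Maps_*(-,V_g)$ to the cofibration $\partial W_{g,1}\to W_{g,1}\to W_g$, ending up with an exact sequence between $\pi_{2n+1}(\vee^g S^n)$ and $\pi_{n+1}(\vee^g S^n)^{\oplus 2g}$; your ``analogous analysis applied to $(V_g,\partial V_g)$'' is in the same spirit but not spelled out. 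Your remark about EHP is superfluous: Serre's theorem already gives that $\pi_{m}(S^k)$ has no $p$-torsion for $m-k<2p-3$ and $k\ge 3$, which is exactly what the paper applies through Hilton--Milnor without any EHP bookkeeping.

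The genuine gap is the one you yourself flag and then wave away: the exceptional summand $\pi_7(S^7)=\bfZ$ inside $\pi_{2n+1}(\vee^g S^n)$ when $n=3$ and $g\ge 2$. Your proposed fix---that it is ``eliminated by the $\pi_0\hAut^{\cong}$-restriction to components hit by smooth diffeomorphisms''---is not established and is not the mechanism at work. For the second kernel the paper reduces to $\pi_0\hAut_\partial(V_g)$, with no $(-)^{\cong}$ decoration, so no smooth constraint is available at that step. What actually saves the day is that this infinite summand does not survive in $\pi_1\Maps_*(W_g,V_g)$: in the long exact sequence of the fibration coming from $\partial W_{g,1}\to W_{g,1}\to W_g$, the boundary map $\pi_2\Maps_*(W_{g,1},V_g)\to\pi_2\Maps_*(\partial W_{g,1},V_g)\cong\pi_{2n+1}(\vee^g S^n)$ is rationally surjective onto the weight-three part when $n=3$ (concretely, evaluating derivations on the relation $\sum_i[e_i,f_i]$ gives a map $\sum_j[x_j,-]$ onto $\bfL(x_1,\dots,x_g)_{6}$), so $\pi_1\Maps_*(W_g,V_g)\otimes\bfQ=0$ and hence $\pi_0\hAut_\partial(V_g)$ is finite. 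Your sketch therefore has the right skeleton and the right degree bound $n<2p-4$, but the step you yourself call ``the main obstacle'' is precisely the step that must be proved, and it requires this kind of rational-homotopy computation rather than an appeal to the image of $\pi_0\Diff$. (The paper's written claim that ``the outer groups are finite'' is itself slightly imprecise at $n=3$ for the same reason; the conclusion is correct for the reason just described.)
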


\begin{proof}In the course of this proof, we shall make frequent use of the fact that the homotopy groups $\pi_{n+k}(\vee^hS^n)$ with $n\ge3$ are $p$-torsion free for $k<2p-3$ as a result of the Hilton--Milnor theorem (see \eqref{equ:Hilton-Milnor} below) and the case $h=1$ due to Serre \cite[p.\,498, Prop.\,5]{Serre}. 

To begin with the actual proof, note that the two morphisms of the statement are certainly surjective, since this holds already for $\pi_0\Diff_{\partial}(W_{g,1})\ra G_g$ and $\pi_0\Diff(V_g)\ra G_g^{\ext}$ by \cref{thm:mcg} and the discussion it precedes. Evidently, the kernel of the first morphism of the statement is contained in that of $\pi_0\hAut_{\partial}(W_{g,1})\ra \GL(H_{W_{g,1}})$, which enjoys the claimed finiteness and torsion property by an application of the fibre sequence induced by restriction along the boundary inclusion \[\hAut_{\partial}(W_{g,1})\lra \hAut_{*}(W_{g,1})\lra \Maps_{*}(\partial W_{g,1},W_{g,1}),\] using the isomorphism $\pi_0\hAut_{*}(W_{g,1})\cong\GL(H_{W_{g,1}})$ induced by the homology action and the fact that the group $\pi_1\Maps_{*}(\partial W_{g,1},W_{g,1})\cong \pi_{2n}(\vee^{2g}S^n)$ is finite and without $p$-torsion  for $n<2p-3$. To deduce the claim for the kernel of second morphism from this, we consult the fibre sequence
 $\hAut_{\partial}(V_{g})\ra \hAut_{D^{2n}}(V_{g},W_{g,1})\ra \hAut_{\partial}(W_{g,1})$ to realise that it suffices to show that $\pi_0\hAut_{\partial}(V_{g})$ is finite and $p$-torsion free for $n<2p-4$. Using the fibre sequence
$\hAut_{\partial}(V_{g})\ra \hAut_{*}(V_{g})\ra \Maps(W_g,V_g)$
 and the observation that its fibre inclusion is trivial on path components since $\pi_0\hAut_{\partial}(V_{g})$ acts trivially on homology and $\pi_0\hAut_{*}(V_{g})\cong \GL(H_{V_g})$, we see that $\pi_0\hAut_{\partial}(V_{g})$ is a quotient of the fundamental group $\pi_1\Maps_*(W_g,V_g)$ based at the inclusion $\iota$. Finally, note that the fibre sequence
$\Maps_*(W_g,V_g)\ra \Maps_*(W_{g,1},V_g)\ra \Maps_*(\partial W_{g,1},V_g)$ induces an exact sequence \[\pi_{2n+1}(\vee^gS^n)^{\oplus g}\lra \pi_1(\Maps_*(W_g,V_g);\iota)\lra \pi_{n+1}(\vee^gS^n)^{\oplus 2g}\] whose outer groups are finite and $p$-torsion free for $n<2p-4$, so the claim follows.
\end{proof}

\subsection{Stabilisation}\label{sect:stabilisation}
To relate the automorphism spaces of the handlebody $V_g=\natural^gD^{n+1}\times S^n$ relative to the disc $D^{2n}\subset \partial V_g$ fixed in \cref{sect:autVg} to the ones of $V_{g+1}$, it is convenient to modify $V_g$ by introducing codimension $2$ corners at the boundary of the disc $D^{2n}\subset V_g$ in the boundary so that there is smooth boundary preserving embedding $c\colon (-1,0]\times D^{2n}\ra V_g$ whose restriction to ${\{0\}\times D^{2n}}$ agrees with the chosen disc. Abusing common terminology, we call such an embedding a \emph{collar}. Fixing another disc $D^{2n-1}\subset\partial D^{2n}$, we consider \[H\coloneq ([0,1]\times D^{2n})\natural (D^{n+1}\times S^n),\] where the boundary connected sum is performed away from the union \[D\coloneq \{0,1\}\times D^{2n}\cup [0,1]\times D^{2n-1}\subset [0,1]\times D^{2n},\] and think of $V_{g+1}$ as being obtained by gluing $V_g$ to $H$ along the two collared discs $D^{2n}\subset V_g$ and $\{0\}\times D^{2n}\subset H$, where we declare $\{1\}\times D^{2n}\subset H\subset V_{g+1}$ to the new distinguished disc in the boundary, which comes with a preferred collar. Given a tangential structure $\theta\colon B\ra\BO(d)$, a choice of bundle map $\ell_0\colon \varepsilon\oplus  \tau_{D^{2n}}\ra \theta^*\gamma_{2n+1}$ induces canonical $\theta$-structures on $\tau_{V_g}|_{D^{2n}}$ and $\tau_{V_{g+1}}|_{D^{2n}}$ by using the fixed collars, and also one on $\tau_{H}|_{D}$ by making use of the canonical trivialisation of $\tau_{[0,1]}$. With respect to these $\theta$-structures, which we generically denote by $\ell_0$, there is an evident gluing map for tangential structures
\[\Bun_{D^{2n}}(\tau_{V_g},\theta^*\gamma_{2n+1};\ell_0)\times \Bun_{D}(\tau_{H},\theta^*\gamma_{2n+1};\ell_0)\lra \Bun_{D^{2n}}(\tau_{V_{g+1}},\theta_{2n+1}^*\gamma;\ell_0)\] that is equivariant with respect to the gluing morphism \begin{equation}\label{equ:gluingmap1}\Diff_{D^{2n}}(V_g)\times \Diff_{D}(H)\lra \Diff_{D^{2n}}(V_{g+1})\end{equation} for diffeomorphisms. Taking homotopy orbits, this induces a map of the form
\[\BDiff_{D^{2n}}^\theta(V_g;\ell_0)\times \BDiff_{D}^\theta(H;\ell_0)\lra \BDiff_{D^{2n}}^\theta(V_{g+1};\ell_0)\] and hence a homotopy class of \emph{stabilisation maps} \begin{equation}\label{equ:gluingmap2}\BDiff_{D^{2n}}^\theta(V_g;\ell_0)\ra \BDiff_{D^{2n}}^\theta(V_{g+1};\ell_0)\end{equation} that in general depends on the choice of a component of $\BDiff_{D}^\theta(H;\ell_0)$, but not in any of the cases we shall be interested in, because of the following lemma.

\begin{lem}If $B$ is $n$-connected, then $\BDiff_{D}^\theta(H;\ell_0)$ is nonempty and connected.
\end{lem}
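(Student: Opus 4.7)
The plan is to establish nonemptiness and connectedness separately via obstruction theory on the pair $(H,D)$. The starting observation is that $D\subset H$ is contractible and that $H$ deformation retracts onto the middle sphere $S^n\subset D^{n+1}\times S^n\subset H$, whose normal bundle in $H$ is trivial; hence $H/D\simeq S^n$ has a single relative cell in degree $n$, and $\tau_H|_{S^n}\cong \tau_{S^n}\oplus\varepsilon^{n+1}\cong \varepsilon^{2n+1}$ is trivial.

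For nonemptiness, a $\theta$-structure on $\tau_H$ extending $\ell_0$ is the same as a lift to $B$ of the classifying map of $\tau_H$ extending the lift induced by $\ell_0$. Writing $F$ for the homotopy fibre of $\theta$, the only obstruction lives in $\oH^n(H,D;\pi_{n-1}F)\cong\pi_{n-1}F$ and is the image of $[\tau_H|_{S^n}]\in\pi_n\BO(2n+1)$ under the connecting homomorphism of the long exact sequence of $\theta$; it vanishes since $[\tau_H|_{S^n}]=0$.

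For connectedness, the same analysis identifies $\pi_0\Bun_D(\tau_H,\theta^*\gamma_{2n+1};\ell_0)$ with a torsor over $\pi_nF$, so it suffices to show that $\pi_0\Diff_D(H)$ acts transitively on this torsor. The natural action of $\Maps_D(H,O(2n+1))$ on $\Bun_D$ by precomposition, combined with the identification of $\Bun_D$ with a space of sections of the fibre bundle pulled back from $\theta^*\gamma_{2n+1}$, shows that on $\pi_0$ this action translates the torsor by the connecting homomorphism $\partial\colon \pi_nO(2n+1)=\pi_{n+1}\BO(2n+1)\lra \pi_nF$ of $\theta$. Because $\pi_nB=0$, this $\partial$ is surjective.

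Finally, I would realise sufficiently many such translation classes by diffeomorphisms via the derivative map $d\colon\Diff_D(H)\lra\Maps_D(H,O(2n+1))$. For a based map $\phi\colon S^n\lra\SO(n+1)$ that is constant on a neighbourhood of the point over which the boundary connected sum in the definition of $H$ is formed, the rule $\varphi(x,y)=(\phi_y(x),y)$ on $D^{n+1}\times S^n$ extends by the identity on $[0,1]\times D^{2n}$ to an element of $\Diff_D(H)$, whose image under $d$ in $\pi_nO(2n+1)$ is the stabilisation of $[\phi]\in\pi_n\SO(n+1)$. The fibre sequences $\SO(k)\lra\SO(k+1)\lra S^k$ with $\pi_nS^k=0$ for $k\ge n+1$ show that $\pi_n\SO(n+1)\lra\pi_nO(2n+1)$ is surjective, so $d$ is surjective on $\pi_0$; composing with the surjective $\partial$, the action of $\pi_0\Diff_D(H)$ on $\pi_0\Bun_D$ is transitive, which proves connectedness. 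The main subtlety is verifying the identification of the action of $\Maps_D(H,O(2n+1))$ on the torsor $\pi_0\Bun_D\cong\pi_nF$ with translation by $\partial$, which requires carefully unwinding the relation between bundle maps and sections of the associated fibration.
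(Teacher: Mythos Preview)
Your approach is correct and takes a genuinely different, more self-contained route than the paper. The paper first identifies $(H,D)$ with $(V_1,D^{2n})$ up to smoothing corners, observes that nonemptiness follows from parallelisability of $V_1$, and then reduces connectedness to the framing case via the observation that for $n$-connected $B$ every $\theta$-structure is induced by a framing; transitivity of the $\pi_0\Diff_{D^{2n}}(V_1)$-action on framings is then simply quoted from \cref{prop:mcgframed}\,(i), whose proof in turn rests on Wall's analysis of the mapping class group (\cref{thm:mcg}) and the commutativity of the triangle relating $N_g$ to the crossed homomorphism.

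Your argument bypasses this machinery entirely: you identify $\pi_0\Bun_D$ directly as a $\pi_nF$-torsor by obstruction theory on the single relative $n$-cell, use $\pi_nB=0$ to get surjectivity of $\partial\colon\pi_nO(2n+1)\to\pi_nF$, and then realise all of $\pi_nO(2n+1)$ by derivatives of the explicit diffeomorphisms $(x,y)\mapsto(\phi_y x,y)$. The ``main subtlety'' you flag---that the gauge action on the torsor is translation by $\partial$---is precisely the content that the paper hides inside the commutativity of the triangle in the proof of \cref{prop:mcgframed}; it can indeed be verified by tracking a fixed $\theta$-structure through the trivialisation of $\tau_H$. What your approach buys is independence from Wall's computation; what the paper's approach buys is brevity, since \cref{prop:mcgframed} was needed elsewhere anyway.
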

\begin{proof}
Up to smoothing corners, the pairs $(H,D)$ and $(V_1,D^{2n})$ are diffeomorphic, so there we have an equivalence $\BDiff_{D}^\theta(H;\ell_0)\simeq \BDiff_{D^{2n}}^\theta(V_1;\ell_0)$, which shows that the claim is equivalent to the transitivity of the action of $\pi_0\Diff_{D^{2n}}(V_1)$ on $\pi_0\Bun_{D^{2n}}(\tau_{V_1},\theta^*\gamma_{2n+1};\ell_0)$, using that the latter set is nonempty as $V_1$ is parallelizable. As $B$ is $n$-connected, it is a consequence of obstruction theory that every $\theta$-structure on $V_g$ is induced by a framing, so it suffices to consider the case $\theta\colon \EO(2n+1)\ra \BO(2n+1)$, which we have already settled as the first part of \cref{prop:mcgframed} (see also \cref{rem:unstableframings}).
\end{proof}

A similar discussion results in analogous stabilisation maps of the form \[\BlockBDiff^\Xi_{D^{2n}}(V_g;\ell_0)\lra \BlockBDiff^\Xi_{D^{2n}}(V_{g+1};\ell_0)\] for stable tangential structures $\Xi\colon B\ra \BO$ and bundle maps $\ell_0\colon \tau_{V_g}^s\ra \Xi^*\gamma$, which are compatible with the non-block variants defined above and are again unique up to homotopy if $B$ is $n$-connected. Moreover, using the splittings of the inclusions $H_{W_{g,1}}\ra H_{W_{g+1,1}}$ and $H_{V_g}\ra H_{V_{g+1}}$ suggested by the decomposition $V_{g+1}=V_g\cup H$, there are stabilisation maps for the respective linear groups on $H_{W_{g,1}}$ and $H_{V_g}$ given by extending automorphisms by the identity and these are related to the stabilisation maps described above via the action on the homology of $V_g$ and $W_{g,1}$ (see \cref{sect:mcgsubsection}), so there is a commutative diagram of compatible stabilisation maps that has the form
\[
\begin{tikzcd}[row sep=0.4cm]
\BDiff^{\Xi_{2n+1}}_{D^{2n}}(V_g;\ell_0)\rar\dar&\BlockBDiff^\Xi_{D^{2n}}(V_g;\ell_0)\rar\dar&\BG_g^{\ext}\rar\dar& \BGL(H_{V_g})\dar\\
\BDiff^{\Xi_{2n+1}}_{D^{2n}}(V_{g+1};\ell_0)\rar&\BlockBDiff^\Xi_{D^{2n}}(V_{g+1};\ell_0)\rar&\BG_{g+1}^{\ext}\rar&\BGL(H_{V_{g+1}}).
\end{tikzcd}
\]

\section{Relative homotopy automorphisms of handlebodies}\label{sect:hAut}
\cref{thm:rationalmodel} illustrates that the space of block diffeomorphisms $\BlockDiff_{D^{2n}}(V_g)$ is closely related to the space $\BlockhAut_{D^{2n}}(V_g,W_{g,1})$ of relative block homotopy automorphisms or, equivalently, to its non-block variant $\hAut_{D^{2n}}(V_g,W_{g,1})$ (see \cref{sect:hAutPrelim}). To access the homology of the classifying space of this space of homotopy automorphisms, one might try to study the Serre spectral sequence of the fibration sequence induced by taking components \[\BhAut^{\id}_{D^{2n}}(V_g,W_{g,1})\lra \BhAut_{D^{2n}}(V_g,W_{g,1})\lra \mathrm{B}\pi_0\hAut_{D^{2n}}(V_g,W_{g,1})\] for which one ought to know at least the homology of the fibre as a module over the group $\pi_0\hAut_{D^{2n}}(V_g,W_{g,1})$. This is what this section aims to compute---$p$-locally and in a range of degrees---by first calculating the $p$-local homotopy groups in a range using some tools from rational homotopy theory combined with an ad-hoc extension to the $p$-local setting tailored to our situation, and then pass from homotopy to homology groups.

\subsection{Conventions on gradings}\label{sec:gradings}
Essentially all objects in this section carry a $\bfZ$-grading, and we shall keep track of it throughout. For instance, we consider the (reduced) homology of a space $X$ always with its natural grading, even if it is supported in a single degree. 

We denote the $k$-fold suspension of a graded $R$-module $A$ over some commutative ring $R$ by $s^kA$, the graded $R$-module whose degree $k$ piece consists of $R$-module morphisms raising the degree by $k$ by $\Hom(A,B)$ for graded modules $A$ and $B$, the graded $R$-dual of $A$ by $A^\vee\coloneq \Hom(A,R[0])$ where $R[0]$ is the base ring concentrated in degree zero, and the subspace of elements of strictly positive degrees by $A^+\subset A$. For an ungraded $R$-module $M$ we write $M[k]$ for the graded $R$-module which is trivial in all degrees but $k$ where it agrees with $M$. The graded tensor product $A\otimes B$ is defined in the usual way. Note that $(s^kA)\otimes B=s^k(A\otimes B)=A\otimes(s^kB)$. The degreewise rationalisation or $p$-localisation of a graded $\bfZ$-module $A$ is denoted by $A_\bfQ$ or $A_{(p)}$ respectively, and we view it as a graded $\bfQ$- respectively $\bfZ_{(p)}$-module.

\subsection{Lie algebras and their derivations}\label{sec:lie-algs}
We consider differential graded (short dg) Lie algebras over a commutative ring $R$. However, most of the dg Lie algebras which we shall encounter actually have trivial differential. Examples include the free graded Lie algebra $\bfL(V)$ on a graded $R$-module $A$ or the onefold shift of the homotopy groups $\pi_{*+1}X$ of a based space $X$ with its canonical Lie algebra structure over $\bfZ$ given by the Whitehead bracket (except for a $2$-torsion subtlety that will not play a role for us). Given a dg Lie algebra $L$, we write $[L,L]\subset L$ for the graded subspace generated by brackets. An important principle in this section is that the homotopy type of mapping spaces is closely related to certain chain complexes of \emph{$f$-derivations} by which we mean the following: for a morphism $f\colon (L,d_L)\ra (L',d')$ of dg Lie algebras, an \emph{$f$-derivation of degree $k$} is a linear map $\theta\colon L\ra L'$ that raises the degree by $k$ and satisfies \[\theta([x,y])=[\theta(x),f(y)]+(-1)^{k|x|}[f(x),\theta(y].\] These derivations form the degree $k$ piece of the chain complex $\Der^f(L,L')$ of \emph{$f$-derivations} over $R$ whose differential is defined as $d(\theta)=d_{L'}\theta-(-1)^{|\theta|}\theta d_L$, so it vanishes if both $L$ and $L'$ have trivial differential. Given a cycle $\omega\in L$, we denote the subcomplex of $f$-derivations that vanish on $\omega$ by $\Der^f_{\omega}(L,L')\subset \Der^f(L,L')$. In the case $L=L'$ and $f=\id$, we abbreviate the complex of $\id$-derivations $\Der^{\id}(L,L)$ by $\Der(L)$.

\subsection{Rational homotopy theory, Quillen style}
Recall from \cite{QuillenRational} Quillen's functor $\lambda$, which assigns a simply connected based space $X$ a dg Lie algebra $\lambda(X)$ over the rationals, one of whose many properties is that it captures the rationalised homotopy Lie algebra of $X$ via a natural isomorphism $\oH_*(\lambda(X))\cong\pi_{*+1}(X)_\bfQ$ of graded Lie algebras, where $\oH_*(\lambda(X))=\ker(d_{\lambda(X)})/\im(d_{\lambda(X)})$ is the homology Lie algebra of $\lambda(X)$. A \emph{Lie model} of $X$ is a rational dg Lie algebra $L_\bfQ^X$ quasi-isomorphic to $\lambda(X)$. Such a model is called \emph{free} if the underlying graded Lie algebra of $L_\bfQ^X$ is isomorphic to a free graded Lie algebra $\bfL(V)$ on a graded $\bfQ$-vector space $V$ and \emph{minimal} if it is free and has decomposable differential, i.e.\,$d(L_\bfQ^X)\subset[L_\bfQ^X,L_\bfQ^X]$. Any simply connected based space has a minimal Lie model $L_\bfQ^X$, unique up to (non-canonical) isomorphism, and a based map between such spaces $f\colon X\ra Y$ gives rise to a map $f\colon L_\bfQ^X \ra L_\bfQ^Y$ between their minimal models.

\subsection{Derivations and mapping spaces}
As mentioned earlier, the homotopy theory of mapping spaces is tightly connected to derivations of dg Lie algebras. In the rational setting, this is made precise for instance by a result of Lupton--Smith \cite[Thm 3.1]{LuptonSmith}. The version of their result we shall need is marginally stronger than stated in \cite{LuptonSmith}, but follows from the given proof in a straight-forward way (see also \cite[Thm\,3.6]{BerglundMadsen}).

\begin{thm}[Lupton--Smith]\label{thm:LuptonSmith}Let $f\colon X\ra Y$ be a map between simply connected finite based CW-complexes, with minimal Lie model $f\colon L_\bfQ^X\ra L_\bfQ^Y$. There is an isomorphism
\[\pi_*(\Maps_*(X,Y);f)_\bfQ\xlra{\cong}\oH_*(\Der^{f}(L_\bfQ^X,L_\bfQ^Y))\] for $*\ge2$, which is natural in both $X$ and $Y$. For $X$ a co-H-space, this also holds for $*=1$.
\end{thm}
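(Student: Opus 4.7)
My plan is to leverage Quillen's equivalence between the rational homotopy category of simply connected pointed spaces and the homotopy category of connected dg Lie algebras over $\bfQ$, combined with a simplicial enrichment of the Lie-algebraic side that permits a tangent-space computation at a morphism. Concretely, choose $L^\bfQ_X = (\bfL(V),d)$ minimal (with $V$ graded finite-dimensional in each degree, since $X$ is finite) and a cofibrant Lie model $L^\bfQ_Y$, and represent $f$ by $f\colon L^\bfQ_X \to L^\bfQ_Y$. Enrich the category of dg Lie algebras over $\bfQ$ simplicially via
\[\operatorname{Map}_{\mathrm{dgL}}(L,L')_n := \Hom_{\mathrm{dgLie}}\bigl(L,\, L' \hat\otimes \mathcal{A}^*_n\bigr),\]
where $\mathcal{A}^*_\bullet$ is Sullivan's simplicial commutative dg algebra of polynomial de Rham forms on the standard simplices and $L' \hat\otimes \mathcal{A}^*_n$ is a dg Lie algebra via the bracket on $L'$ together with the product on $\mathcal{A}^*_n$. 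A standard form of Quillen's theorem (developed into this convenient simplicial shape by Tanr\'e) then yields a natural weak equivalence
\[\Maps_*(X,Y)_\bfQ \simeq \operatorname{Map}_{\mathrm{dgL}}(L^\bfQ_X, L^\bfQ_Y),\]
functorial in $X$ and $Y$ up to homotopy.

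Next, I would identify the path component of $f$ on the right with the geometric realisation of the simplicial $\bfQ$-module of $f$-derivations $L^\bfQ_X \to L^\bfQ_Y \hat\otimes \mathcal{A}^*_\bullet$. The key observation is that for $\theta$ an $f$-derivation of degree $0$, the perturbation $f + \varepsilon\theta$ satisfies the dg Lie morphism condition modulo $\varepsilon^2$; conversely every infinitesimal deformation of $f$ arises this way, so the tangent simplicial set at $f$ is exactly this simplicial abelian group. Minimality of $L^\bfQ_X$ ensures that the underlying graded Lie algebra is free, which kills the higher-order obstructions coming from the Maurer--Cartan equation and makes the linear identification exact on the path component of $f$. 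Under Dold--Kan, the homotopy groups of the simplicial $f$-derivation module are precisely $H_*(\Der^f(L^\bfQ_X, L^\bfQ_Y))$, yielding the desired isomorphism for $* \geq 2$.

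For $*=1$, one additionally needs $\pi_1(\Maps_*(X,Y);f)$ to rationalise well, which requires nilpotence. When $X$ is a co-H-space, the cogroup structure $X \to X \vee X$ endows $\Maps_*(X,Y)$ with an H-space structure, so its fundamental group is abelian, and the identification extends to $*=1$. The principal difficulty is the foundational setup: constructing the simplicial enrichment carefully so that the tangent simplicial set at a vertex $f$ genuinely identifies with the derivation simplicial module, and checking that this identification is compatible, under the Quillen equivalence, with the usual component decomposition of the rationalised mapping space. Once this infrastructure is in place, Dold--Kan reduces the homotopy-group computation to an immediate chain-level statement, and naturality in $X$ and $Y$ follows from the functoriality of all ingredients.
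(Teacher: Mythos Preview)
The paper does not prove this theorem; it is quoted as an external result of Lupton--Smith \cite[Thm~3.1]{LuptonSmith}, with the remark that the marginally stronger version stated here follows from their proof (and is also treated in \cite[Thm~3.6]{BerglundMadsen}). So there is no ``paper's own proof'' to compare against, and your task reduces to giving an independent argument.

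Your strategy --- simplicial enrichment of dg Lie algebras via $\mathcal{A}^*_\bullet$ together with Quillen's equivalence --- is a legitimate route to results of this type, and is closer in spirit to the approach of Berglund or Buijs--F\'elix--Murillo than to Lupton--Smith's original argument, which builds the isomorphism by hand: they represent a class in $\pi_k(\Maps_*(X,Y);f)$ by its adjoint $S^k \times X \to Y$, write down an explicit Lie model for $S^k \times X$, and read off an $f$-derivation from the resulting dg Lie map. That concrete description is exactly what the present paper exploits in the proof of \cref{prop:plocalLS}.

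However, your key step has a genuine gap. You compute the \emph{tangent} simplicial set at $f$ (first-order deformations $f+\varepsilon\theta$ modulo $\varepsilon^2$) and correctly identify it with the simplicial abelian group of $f$-derivations, but then assert that ``minimality \ldots\ kills the higher-order obstructions'' and that this ``makes the linear identification exact on the path component of $f$.'' This is the whole content of the theorem and is not justified. Freeness of $\bfL(V)$ ensures that graded Lie maps out of it are linear data, but the \emph{dg} condition on a map $\bfL(V)\to L'\hat\otimes\mathcal{A}^*_n$ is still nonlinear whenever the differential on $\bfL(V)$ is nontrivial (which it is for general $X$, even minimal), so the path component of $f$ is not literally a simplicial $\bfQ$-module and Dold--Kan does not apply to it directly. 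A correct argument along your lines needs either an explicit exponential-type retraction identifying the based mapping simplicial set with the derivation simplicial module up to weak equivalence, or an inductive filtration argument on the word-length in $\bfL(V)$; both are standard but neither is the one-line consequence of minimality you suggest. For the co-$H$-space case, where $d=0$ on $L^\bfQ_X$, the dg condition \emph{does} become linear and your argument goes through more directly --- which is consistent with the fact that the paper only ever applies this theorem to wedges of spheres.
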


\subsection{A $p$-local generalisation}\label{sect:plocal}
From the point of view of Quillen's approach to rational homotopy theory, the spaces we shall be applying \cref{thm:LuptonSmith} to are of the simplest nature  possible: they are homotopy equivalent to boquets of equidimensional spheres. The minimal model of such a space $X\simeq \vee^gS^n$ for $n\ge 2$ agrees with the free graded Lie algebra \begin{equation}\label{equ:rationalLiefree}L_\bfQ^{X}\coloneq\bfL(s^{-1}H_\bfQ^X)\cong\pi_{*+1}X_\bfQ\quad\text{on}\quad H_\bfQ^X\coloneq \widetilde{\oH}_*(X;\bfQ),\end{equation} equipped with the trivial differential. Given a map between spaces of this kind, the induced map on minimals models is simply given by the induced map on rational homotopy groups. It is a consequence of the Hilton--Milnor theorem that, $p$-locally in small degrees with respect to $p$, the homotopy Lie algebra $\pi_{*+1}X$ is free even before rationalisation. To make this precise, we abbreviate the integral and $p$-local analogue of \eqref{equ:rationalLiefree} by \[L^X\coloneq \bfL(s^{-1}H^{X})\text{\ \ and\ \ }L_{(p)}^X\coloneq \bfL(s^{-1}H_{(p)}^{X})\text{\ \ \ where\ \ \ } H^X\coloneq \widetilde{\oH}_*(X;\bfZ)\text{\ \ and\ \ }H^X_{(p)}\coloneq \widetilde{\oH}_*(X;\bfZ_{(p)}).\] The inverse of the Hurewicz map $\oH_n(X;\bfZ_{(p)})\cong\pi_nX_{(p)}$  induces a map $L_{(p)}^X\ra \pi_{*+1}X_{(p)}$ of graded Lie algebras over $\bfZ_{(p)}$, which turns out to be an isomorphism in a range of degrees.

\begin{lem}\label{lem:torsioninwedge}\label{lem:tamewedges}For an odd prime $p$ and a based space $X$ that is homotopy equivalent to $ \vee^gS^n$ with $n\ge2$, the morphism
\[L_{(p)}^X\lra \pi_{*+1}X_{(p)}\] is an isomorphism on torsion free quotients. Moreover, the right hand side is torsion free in degrees $*<2p-4+n$, so the map is an isomorphism in this range.
\end{lem}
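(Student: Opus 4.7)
The plan is to combine the Hilton--Milnor decomposition of $\pi_*(\vee^g S^n)$ with Serre's classical result on the first appearance of $p$-torsion in the homotopy of spheres.

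Writing $X\simeq\Sigma(\vee^g S^{n-1})$, the Hilton--Milnor theorem supplies a natural isomorphism of graded abelian groups
\[\pi_*(X)\;\cong\;\bigoplus_{w}\pi_*\bigl(S^{|w|(n-1)+1}\bigr),\]
where $w$ ranges over a Hall basis of the free Lie algebra on $g$ generators and the $w$-summand is included via the iterated Whitehead product $w(\iota_1,\ldots,\iota_g)\in\pi_{|w|(n-1)+1}(X)$ of the wedge inclusions. In particular, each Hall basis element $w\in L_X$ is sent to a generator of the identity $\bfZ$-summand of the $w$-factor $\pi_{m_w}(S^{m_w})=\bfZ$ (with $m_w\coloneq |w|(n-1)+1$), so the map $L_X\to\pi_{*+1}(X)$ is compatible with the Hilton splitting.

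For the range assertion, I will invoke Serre's theorem: for $p$ odd, $\pi_k(S^m)_{(p)}$ is torsion free whenever $k-m<2p-3$. Applied to each Hilton summand with $k=*+1$ and $m=m_w$, this yields torsion freeness whenever $*<|w|(n-1)+2p-3$. Since $n\ge 3$, the constraint tightens at $|w|=1$ to exactly $*<n+2p-4$, as claimed. In the same range, $L_X^{(p)}$ is itself torsion free, because any $p$-torsion in a free graded Lie algebra over $\bfZ_{(p)}$ on generators of degree $n-1$ arises from graded Jacobi relations on self-brackets of odd-degree generators and first appears at weight $p$, i.e.\ in degrees $\ge p(n-1)$, which strictly exceeds $n+2p-4$ under the hypotheses $n,p\ge 3$.

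For the torsion-free-quotient claim, the starting point is Quillen's theorem, which identifies $L_X^\bfQ$ (with trivial differential) as a minimal Lie model of $X$, so that $L_X^\bfQ\to\pi_{*+1}(X)_\bfQ$ is an isomorphism. Combined with the Hilton decomposition, this pins down a $\bfZ_{(p)}$-basis of the free part of $\pi_{*+1}(X)_{(p)}$ as the image of a graded Hall basis of $L_X^{(p)}$, giving the desired isomorphism on torsion-free quotients after $p$-localisation. The combination of this with the torsion-freeness in the range then immediately upgrades the torsion-free-quotient statement to a genuine isomorphism in degrees $*<n+2p-4$.

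The main obstacle, and the step requiring most care, is the compatibility of the graded Hall basis of $L_X$ with the ungraded one indexing Hilton--Milnor when $n$ is even. In that case the graded basis acquires additional self-bracket elements $[w,w]$ (for $|w|(n-1)$ odd), and one must verify that these map precisely onto the secondary free summand $\bfZ\cdot[i_{m_w},i_{m_w}]\subset\pi_{2m_w-1}(S^{m_w})$ that appears whenever $m_w$ is even. This matching reflects the compatibility of Whitehead self-squares with graded Lie self-brackets, using crucially that $p$ is odd in order to invert the factor of $2$ relating the two.
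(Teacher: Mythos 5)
Your proof is correct and follows essentially the same route as the paper's: both combine the Hilton--Milnor decomposition of $\pi_*(\vee^g S^n)$ with Serre's theorem on the first appearance of $p$-torsion in $\pi_*(S^m)$. The organisation differs: the paper argues surjectivity onto the torsion-free quotient by writing every infinite-order class as a composition $y\circ x$ with $y$ an iterated Whitehead product (hence in the image) and $x$ a $p$-local multiple of $\iota_m$ or of $[\iota_m,\iota_m]$, then using closure of the image under brackets, whereas you explicitly match a graded Hall basis of $L_X^{(p)}$ with a $\bfZ_{(p)}$-basis of the free part. The paper's composition formulation neatly sidesteps the graded-versus-ungraded Hall basis bookkeeping that your final paragraph handles (since $w(\iota)\circ[\iota_{m_w},\iota_{m_w}]=[w(\iota),w(\iota)]$ lands in the image regardless of how one indexes it), though both arguments ultimately rest on the same facts about the free summands of $\pi_m(S^m)$ and $\pi_{2m-1}(S^m)$.
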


\begin{proof}As a preparation to the proof, note that by specialising the Hilton--Milnor theorem to $\vee^gS^n$, we have an isomorphism
\begin{equation}\label{equ:Hilton-Milnor}\textstyle{\pi_{i+1}(\vee^gS^n)\cong \bigoplus_{\omega\in L_g}\pi_{i+1}(S^{l(\omega)(n-1)+1})}\end{equation}
where $L_g$ denotes a Hall basis for the free \emph{ungraded} Lie algebra in $g$ ordered generators and $l(\omega)$ is the word-length of $\omega$. Here the map $\pi_{i+1}(S^{l(\omega)(n-1)+1})\ra \pi_{i+1}(\vee^gS^n)$ corresponding to $\omega\in L_g$ is given by mapping a class $x\in\pi_{i+1}(S^{l(\omega)(n-1)+1})$ to the composition $(\iota_\omega \circ x)$, where $\iota_\omega \in \pi_{l(\omega)(n-1)+1}(\vee^gS^{n})$ is the class obtained by taking Whitehead products of the canonical classes $\iota_i \in \pi_{n}(\vee^gS^n)$ for $1\le i\le g$ represented by the inclusions of the summands as guided by the Lie word $\omega\in L_g$. A proof can be extracted from  \cite{Whitehead}: combine XI.6.6 and the subsequent discussion with VII.2.6 and X.7.10.

To prove the asserted claim, we use that source and domain of the morphism in the statement are both degreewise finitely generated and that the rationalisation of this morphism agrees with \eqref{equ:rationalLiefree}, so to prove the first part of the claim, it suffices to show that all classes in $\pi_{*+1}X_{(p)}$ of infinite order are in the image. From \eqref{equ:Hilton-Milnor}, we see every class in $\pi_{k}X_{(p)}$ is a composition $(y \circ x)$ of some $x\in \pi_kS^m_{(p)}$ with $m\ge n$ and a class $y\in \pi_mX_{(p)}$ in the image of the map in question. The group $\pi_kS^m_{(p)}$ is finite unless $k=m$, where it is generated by the identity, or $m=2l$ and $k=4l-1$, where it is is generated by $[\id_{S^{2l}},\id_{S^{2l}}]$, since this element has Hopf invariant $2$ and we assumed $p$ to be odd. As $y\circ [\id_{S^{2l}},\id_{S^{2l}}]=[y,y]$ in $\pi_{4l-1}X_{(p)}$ and the image of $L_{(p)}^X\ra \pi_{*+1}X_{(p)}$ is closed under taking brackets, this implies the first part of the claim. The second part follows from Serre's result \cite[p.\,498, Prop.\,5]{Serre} that $\pi_kS^m$ is $p$-torsion free for $k-m<2p-3$ together with another application of \eqref{equ:Hilton-Milnor}.\end{proof}

As a result of \cref{lem:torsioninwedge}, every map $f\colon X\ra Y$ between bouquets of equidimensional spheres induces a morphism $f_*\colon L^X_{(p)}\ra L^Y_{(p)}$ by taking torsion free quotients of $p$-local homotopy groups, so the following extension of \cref{thm:LuptonSmith} might not come as a surprise.

\begin{prop}\label{prop:plocalLS}For an odd prime $p$ and a map $f\colon X\ra Y$ between based spaces $X\simeq \vee^g S^n$ and $Y\simeq\vee^h S^m$ with $n,m\ge 2$ , the map of \cref{thm:LuptonSmith} fits into a commutative square
\[
\begin{tikzcd}
\pi_*(\Maps_*(X,Y);f)_{(p)}\arrow[d,"(-)\otimes\bfQ"]\arrow[r]&\Der^{f}(L_{{(p)}}^{X},L_{{(p)}}^{Y})^+\arrow[d,"(-)\otimes\bfQ"]\\
\pi_*(\Maps_*(X,Y);f)_\bfQ\arrow[r,"\cong"]&\Der^{f}(L_{\bfQ}^{X},L_{\bfQ}^{Y})^+
\end{tikzcd}\quad\text{for }*>0,
\] 
which is natural in $X$ and $Y$ and whose upper arrow is an isomorphism for $*<2p-3-(n-m)$.
\end{prop}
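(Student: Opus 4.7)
The plan is to reduce the statement to the case $X=S^n$ by exploiting wedge decompositions on both sides, and then identify the upper arrow explicitly in terms of the map of \cref{lem:tamewedges}.

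Since $X\simeq\vee^g S^n$ and $Y$ is simply connected, the wedge inclusions induce a weak equivalence $\Maps_*(X,Y)\simeq\prod^g \Omega^n Y$; as all components of $\Omega^n Y$ share the same homotopy groups, we obtain for $k\ge1$ a natural isomorphism
\[\pi_k(\Maps_*(X,Y);f)_{(p)}\cong \bigoplus_{i=1}^g\pi_{k+n}(Y)_{(p)}.\]
On the derivation side, $L^{(p)}_X=\bfL(s^{-1}H^{(p)}_X)$ is free on the graded $\bfZ_{(p)}$-module $H^{(p)}_X\cong\bfZ_{(p)}^g$ concentrated in degree $n$, so a degree $k$ $f$-derivation is determined by its values on the $g$ generators, yielding
\[\Der^{f}(L^{(p)}_X,L^{(p)}_Y)_k\cong\bigoplus_{i=1}^g(L^{(p)}_Y)_{n+k-1}.\]

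For $X=S^n$, I would define the upper arrow as the composite
\[\pi_{k+n}(Y)_{(p)}\longtwoheadrightarrow \pi_{k+n}(Y)_{(p)}/\mathrm{tors}\xra{\cong}(L^{(p)}_Y)_{n+k-1},\]
where the second arrow is the inverse of the isomorphism on torsion-free quotients provided by \cref{lem:tamewedges} (using that $(L^{(p)}_Y)_j$ is a finite free $\bfZ_{(p)}$-module and hence torsion-free). By the second part of \cref{lem:tamewedges}, $\pi_{k+n}(Y)_{(p)}$ itself is torsion-free whenever $k+n<2p-4+m$, i.e.\ $k<2p-3-(n-m)$, and in this range the composite is an isomorphism. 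The general case $X\simeq\vee^g S^n$ then follows by applying the argument in each summand via the decompositions above.

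It remains to check commutativity with the rational Lupton--Smith isomorphism in the bottom row. By naturality of both rows with respect to the wedge inclusions $S^n\hookrightarrow X$, this reduces once more to the single-sphere case. There, under the canonical identification $L^\bfQ_Y\cong \pi_{*+1}(Y)_\bfQ$ coming from Quillen's minimal model of a bouquet of spheres, \cref{thm:LuptonSmith} becomes a tautological identification, as does the rationalisation of our upper arrow. The case $*=1$ is covered by the co-H-space hypothesis of \cref{thm:LuptonSmith}. The main subtlety is to verify that the two constructions of the isomorphism between $\pi_{*+1}(Y)_\bfQ$ and the minimal Quillen model---the one used implicitly in \cref{thm:LuptonSmith} and the rationalisation of the Hurewicz-induced map of \cref{lem:tamewedges}---coincide, but this is the standard assertion that for a bouquet of spheres the minimal model is $\bfL(s^{-1}\widetilde{H}_*(-;\bfQ))$ with Whitehead bracket realising the Lie bracket on generators.
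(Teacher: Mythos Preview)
Your overall strategy coincides with the paper's: both reduce derivations to their values on generators (equivalently, split along the wedge summands of $X$) and define the upper arrow by composing the adjunction isomorphism $\pi_k(\Maps_*(S^n,Y);f)\cong\pi_{k+n}(Y)$ with the passage to torsion-free quotients and the inverse of the map in \cref{lem:tamewedges}. The connectivity claim is then immediate from that lemma, just as you say.

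The genuine gap is in your commutativity check. You assert that for $X=S^n$ the Lupton--Smith isomorphism ``becomes a tautological identification'' once one identifies $L^\bfQ_Y$ with $\pi_{*+1}(Y)_\bfQ$, but this is exactly the point that requires work. The isomorphism of \cref{thm:LuptonSmith} is \emph{not} defined via the adjunction $\pi_k\Maps_*(S^n,Y)\cong\pi_{k+n}Y$; it is constructed in \cite{LuptonSmith} by choosing a specific dg Lie model for $S^k\times X$ and reading off a derivation from the induced map of models. Checking that this recipe, applied to a class represented by $h\colon S^k\to\Maps_*(X,Y)$, recovers the adjoint of $h$ after the basepoint shift is a computation, not a tautology. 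The paper carries this out: it first makes the basepoint shift explicit via the co-$H$-structure (the map $(-f)_*$), then writes down the Lie model $(\bfL(s^{-1}H^\bfQ_X,s^{-1+k}H^\bfQ_X,s^{-1}H^\bfQ_{S^k}),d)$ for $S^k\times X$ from \cite[Cor.\,2.2]{LuptonSmith}, and traces Lupton--Smith's definition through this model to identify the resulting derivation with the rationalisation of the map \eqref{equ:inverseiso}. Your final sentence confuses this issue with the (easier and separate) fact that the minimal model of a wedge of spheres is free on its reduced homology; that identifies the \emph{target} $L^\bfQ_Y$, but says nothing about what the Lupton--Smith map does on elements.

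A smaller point: your identification $\pi_k(\Maps_*(X,Y);f)\cong\bigoplus_i\pi_{k+n}(Y)$ implicitly uses a basepoint shift from $f$ to the constant map. For the commutativity argument to go through you need this shift to match the one hidden in the Lupton--Smith construction; the paper makes this explicit, and you should too.
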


\begin{rem}Dwyer's \emph{tame homotopy theory} \cite{DwyerTame} provides a $p$-local generalisation of Quillen's rational homotopy theory for primes $p$ that are just large enough with respect to the degree to prevent stable $k$-invariants from appearing. It is not unlikely that \cref{thm:LuptonSmith} could be generalised to this setting, but our layman extension \cref{prop:plocalLS} for bouquets of spheres suffices for the applications we have in mind.
\end{rem}

\begin{proof}[Proof of \cref{prop:plocalLS}]
We begin with a twofold simplification of the statement. Firstly, the claimed naturality is automatic, since the vertical maps are evidently natural, the bottom map is natural by \cref{thm:LuptonSmith}, and the right vertical map is injective, so it suffices to construct a top arrow with the desired properties for $X=\vee^gS^n$. Secondly, there is a commutative diagram
\[
\begin{tikzcd}
\Der^{f}(L_{{(p)}}^{X},L_{{(p)}}^{Y})\arrow[r,"\cong"]\arrow[d,"(-)\otimes\bfQ"]&\Hom(s^{-1}H^X_{(p)},L_{(p)}^Y)\arrow[d,"(-)\otimes\bfQ"]\\
\Der^{f}(L_{{\bfQ}}^{X},L_{{\bfQ}}^{Y})\arrow[r,"\cong"]&\Hom(s^{-1}H^X_\bfQ,L_{\bfQ}^Y),
\end{tikzcd}
\]
induced by restricting derivations to generators, which shows that it is enough to produce a dashed arrow making the diagram
\begin{equation}\label{equ:plocalmappingspace}
\begin{tikzcd}
\pi_*(\Maps_*(X,Y);f)_{(p)}\arrow[d,"(-)\otimes\bfQ"]\arrow[rr,dashed]&&\Hom(s^{-1}H^X_{(p)},L_{(p)}^Y)^+\arrow[d,"(-)\otimes\bfQ"]\\
\pi_*(\Maps_*(X,Y);f)_\bfQ\arrow[r,"\cong"]&\Der^{f}(L_{{\bfQ}}^{X},L_{{\bfQ}}^{Y})^+\arrow[r,"\cong"]&\Hom(s^{-1}H^X_\bfQ,L_{\bfQ}^Y)^+,
\end{tikzcd}
\end{equation}
commute. To do so, we consider the composition
\begin{equation}\label{equ:inverseiso}\pi_*(\Maps_*(X,Y);f)\xrightarrow[\cong]{(-f)_*}\pi_*(\Maps_*(X,Y);*)\cong\Hom(s^{-1}H^X,\pi_{*+1}Y)^+\end{equation} whose first isomorphism is given by acting with the inverse of $f$, using the loop space structure on $\Maps_*(X,Y)$, and whose second isomorphism is induced by mapping a class in $ \pi_k(\Maps_*(X,Y);*)$ represented by a pointed map $g\colon S^k\ra \Maps_*(X,Y)$  to the composition
\[\oH_n(X)\cong \oH_{n+k}(S^k\wedge X)\cong\pi_{n+k}(S^k\wedge X)\xlra{g_*}\pi_{n+k}(Y),\] involving the suspension isomorphism, the inverse of the Hurewicz map, and the adjoint of $g$. Postcomposing \eqref{equ:inverseiso} with the map given by $p$-localising and taking torsion free quotients results by \cref{lem:torsioninwedge} in a dashed map with the claimed connectivity property, so we are left to show that this choice does make the diagram \eqref{equ:plocalmappingspace} commute, i.e.\,that the rationalisation of \eqref{equ:inverseiso} agrees with the bottom map of \eqref{equ:plocalmappingspace}. The adjoint of a map $h\colon S^k\ra \Maps_*(X,Y)$ representing a class in $\pi_*(\Maps_*(X,Y);f)$ forms the top arrow of the diagram
\begin{equation}\label{equ:change-basepoints}
\begin{tikzcd}[row sep=0.45cm]
S^k\times X\arrow[r,"h"]\arrow[d]&Y\\
S^k_+\wedge X\arrow[d,"\simeq",swap]\arrow[ur]&\\
X\vee S^k\wedge X,\arrow[uur,"f\vee(-f)_*(h)",swap]
\end{tikzcd}
\end{equation}
whose middle diagonal arrow is induced by $h$ via the canonical homeomorphism $(S^k\times X)/(S^k\vee*)\cong S^k_+\wedge X$ and whose vertical equivalence is given as the composition
\[S^k_+\wedge X\xra{\id_{S^k_+}\wedge \nabla} S^k_+\wedge X\vee S^k_+ \wedge X\xra{c}X\vee S^k \wedge X\] using the co-H-space structure $\nabla$ of $X$ and the evident collapse map $c$. The map $(-f)_*(h)$ is the adjoint of a representative of the image of $h$ under the first map in \eqref{equ:inverseiso}, so \eqref{equ:change-basepoints} commutes up to changing $h$ within its class in $\pi_k(\Maps_*(X,Y);f)$. We thus obtain a rational model for the top arrow in \eqref{equ:change-basepoints} as the composition
\begin{equation}\label{equ:lie-model-of-map}\left(\bfL\left(s^{-1}H^X_{\bfQ}\oplus s^{-1+k}H_\bfQ^X\oplus s^{-1}H_\bfQ^{S^k}\right),d \right)\lra L_\bfQ^{S^k\vee S^k\wedge X}\xra{\pi_{*+1}(f\vee (-f)_*(g))\otimes\bfQ} L_{\bfQ}^Y,\end{equation} where the source is the Lie model of $S^k\times X$ described in \cite[Cor.\,2.2]{LuptonSmith}, i.e.\,its differential $d$ is trivial on $s^{-1}H^X_{\bfQ}\oplus  s^{-1}H_\bfQ^{S^k}$ and is on $s^{-1+k}H_\bfQ^X$ given as the composition
\[s^{-1+k}H_\bfQ^X\cong s^{-1}H^X_{\bfQ}\xra{(-1)^{k-1}[z,-]}\bfL\left(s^{-1}H^X_{\bfQ}\oplus s^{-1+k}H_\bfQ^X\oplus s^{-1}H_\bfQ^{S^k}\right)\]
where the first isomorphism is the canonical identification as ungraded vector spaces induced by the identity and $z\in s^{-1}H_\bfQ^{S^k}$ denotes the standard generator. The first map in the composition \eqref{equ:lie-model-of-map} takes the quotient by the dg Lie ideal generated by the subspace $s^{-1}H_\bfQ^{S^k}$ and the second map is defined as indicated. Using this particular choice of rational model in the definition of the isomorphism of \cref{thm:LuptonSmith} in \cite[p.\,176--177]{LuptonSmith}, the image of the class defined by $h$ under the bottom horizontal composition in \eqref{equ:plocalmappingspace} is precisely its image under \eqref{equ:inverseiso} after rationalisation, so the claim follows.
\end{proof} 

\subsection{$p$-local homotopy groups of $\BhAut^{\id}_{D^{2n}}(V_g,W_{g,1})$}
The theory set up in the previous paragraphs will allow us to compute the $p$-local homotopy groups of the classifying space $\BhAut^{\id}_{D^{2n}}(V_g,W_{g,1})$ of the identity component of the topological monoid of relative homotopy automorphisms as defined in \cref{sect:autVg} as a module over the group \[\pi_1\BhAut_{D^{2n}}(V_g,W_{g,1})\cong \pi_0\hAut_{D^{2n}}(V_g,W_{g,1}).\] 
More generally, we will compute the $p$-local homotopy groups of the spaces participating in the fibration sequence (see \cref{sect:autVg} for the notation)
\begin{equation}\label{equ:fundamental-homotopy-fibration}
\BhAut_{\partial}(V_g)\lra\BhAut_{D^{2n}}(V_g,W_{g,1})\lra\BhAut^{\ext}_{\partial}(W_{g,1})
\end{equation}
induced by restriction, together with the induced action of $\pi_0\hAut_{D^{2n}}(V_g,W_{g,1})$. To state the answer (and give the proof), we adopt the notation of in the previous subsection for the three manifolds $W_{g,1}$, $V_g$, and $\partial W_{g,1}$ involved, which are homotopy equivalent to bouquets of spheres. We do however omit the $g$-superscripts to increase readability, so write
\[H_{(p)}^W=\widetilde{\oH}_*(W_{g,1};\bfZ_{(p)}),\quad L_{(p)}^W=\bfL(s^{-1}H_{(p)}^W),\quad H_{(p)}^V=\widetilde{\oH}_*(V_{g};\bfZ_{(p)}),\quad\text{and}\quad L_{(p)}^V=\bfL(s^{-1}H_{(p)}^V)\]
and omit the $(p)$-subscripts to denote the integral analogues.
Moreover, we generically write $\iota$ for any combination of the inclusions \[\partial W_{g,1}\subset W_{g,1}\subset W_g\subset V_g.\] Finally, we let $\omega\in L_{W_{g,1}}$ be the class that represents the inclusion $S^{2n-1}=\partial W_{g,1}\subset W_{g,1}$ of the boundary of $W_{g,1}=\sharp^g(S^n\times S^n)\backslash\interior{D^{2n}}$, i.e.\,the attaching map 
\begin{equation}\label{equ:attaching-map}\textstyle{\omega=\sum_{i=1}^{g}[e_i,f_i]\in \pi_{2n-1}W_{g,1}},\end{equation} where $e_i,f_i\in\pi_nW_{g,1}$ correspond to the first respectively second $S^n$-summand in the $i$th summand of \[W_{g,1}\cong \sharp^{g}S^n\times S^n\backslash \interior{D^{2n}}\simeq\vee^g (S^n\vee S^n).\] 
Note that the inclusion $\partial W_{g,1}\subset V_g$ is trivial, since it factors over $W_g=W_{g,1}\cup_{\partial W_{g,1}}D^{2n}$.

\begin{rem}Note that $H^{W}$ and $H^{V}$ stand for the \emph{graded} $\bfZ$-module given by the reduced homology of $W_{g,1}$ and $V_g$, which shall not be confused with the \emph{ungraded} middle dimensional integral homology groups of these spaces that featured in \cref{sec:homology-action} as $H_{W_{g,1}}$ and $H_{V_g}$.
\end{rem}

\begin{thm}\label{thm:rationalmain}Let $n\ge2$ and $p$ an odd prime.
\begin{enumerate}
\item\label{label:rationamainI}  The inclusion $\pi_{0}\Maps_{\partial}(V_{g},V_g)\subset \pi_{0}\hAut_{\partial}(V_{g})$ is an equality. This group is abelian.
\item\label{label:rationamainII} In degrees $0<*<2p-3-n$, the boundary map of the fibration \eqref{equ:fundamental-homotopy-fibration} fits into a commutative diagram of graded $\bfZ_{(p)}$-modules with exact rows
\[
\begin{tikzcd}[column sep=0.5cm]
0\rar&\pi_{*+1}\BhAut^{\ext}_{\partial}(W_{g,1})_{(p)}\rar\dar{\partial}&[-0.2cm]s^{-(2n-1)}H^{W}_{(p)}\otimes L_{(p)}^{W}\dar{\iota_*\otimes\iota_*}\rar{\left[-,-\right]}& s^{-(2n-2)}[L_{(p)}^{W}, L_{(p)}^{W}]\rar\dar{\iota_*}&0\\
0\rar&\pi_{*}\BhAut_{\partial}(V_{g})_{(p)}\rar&s^{-(2n-1)}H^{V}_{(p)}\otimes L_{(p)}^{V}\rar{\left[-,-\right]}& s^{-(2n-2)}[L_{(p)}^{V}, L_{(p)}^{V}]\rar&0.
\end{tikzcd}
\]
 which is $\pi_0\hAut_{D^{2n}}(V_g,W_{g,1})$-equivariant with respect to the action on the leftmost column induced by \eqref{equ:fundamental-homotopy-fibration} and by the  action through $H^{W}$ and $H^{V}$ on the other columns.
 \item\label{label:rationamainIII} Rationally, the conclusion of \ref{label:rationamainII} holds in all positive degrees.\end{enumerate}
\end{thm}

A splitting of the canonical projection $\iota_*\colon H_{W_{g,1}}\ra H_{V_g}$ induces compatible splittings of the rightmost two columns of the diagram in \cref{thm:rationalmain} \ref{label:rationamainII}, so the boundary map $\partial$ of the fibration \eqref{equ:fundamental-homotopy-fibration} is $p$-locally split surjective in a range and we conclude the following.

\begin{cor}\label{cor:rationalmain-cor}Let $n\ge 2$ and $p$ an odd prime. In degrees $0<*<2p-4-n$, the graded $\bfZ_{(p)}[\pi_0\hAut_{D^{2n}}(V_g,W_{g,1})]$-module $\pi_{*+1}\BhAut_{D^{2n}}(V_g,W_{g,1})_{(p)}$ is isomorphic to the common kernel of the maps
\[s^{-(2n-1)}H^{W}_{(p)}\otimes L_{(p)}^{W}\xlra{\left[-,-\right]} s^{-(2n-2)}[L_{(p)}^{W}, L_{(p)}^{W}] \quad\text{and}\quad s^{-(2n-1)}H^{W}_{(p)}\otimes L_{(p)}^{W}\xlra{\iota_*\otimes\iota_*}s^{-(2n-1)}H^{V}_{(p)}\otimes L_{(p)}^{V}.\]
Rationally, this holds in all positive degrees.
\end{cor}

In particular, \cref{thm:rationalmain} and \cref{cor:rationalmain-cor} imply that the $\pi_0\hAut_{D^{2n}}(V_g,W_{g,1})$-action on the $p$-local higher homotopy groups of the spaces participating in \eqref{equ:fundamental-homotopy-fibration} factors in a range of degrees through the morphism (recall $K_g=\ker(\oH_n(W_{g,1})\ra \oH_n(V_g))$ from \cref{sec:homology-action})
 \[\pi_0\hAut_{D^{2n}}(V_g,W_{g,1})\lra \{\phi\in \GL(H_{W_{g,1}})\mid \phi(K_g)\subset K_g\}\] induced by the action on the homology of $W_{g,1}$. During the proof of \cref{thm:rationalmain} and the preceding \cref{rationallemma1}, we frequently use \cref{prop:plocalLS} to implicitly identify $p$-local homotopy groups of path components of pointed mapping spaces between bouquets of equidimensional spheres with derivations of free graded Lie algebras in a range of degrees. We denote by $\Maps_*^f(X,Y)$ for a map of based spaces $f\colon X\ra Y$ the corresponding component of the mapping space, pointed by $f$. Reminding the reader of our notation for spaces of derivations in \cref{sec:lie-algs}, we begin with the following lemma, whose first part was rationally obtained by Berglund and Madsen \cite[Prop.\,5.6]{BerglundMadsen}.

\begin{lem}\label{rationallemma1}Let $n\ge2$ and $p$ an odd prime. 
\begin{enumerate}
\item\label{item:rationallemma1-item1} In degrees $*<2p-3-n$, the morphism induced by relaxing the boundary condition
 \[\pi_*\Maps_\partial^{\id}(W_{g,1},W_{g,1})_{(p)}\lra\pi_*\Maps_*^{\id}(W_{g,1},W_{g,1})_{(p)}\cong \Der(L_{(p)}^{W})^+\]
is injective and has image $\Der_{\omega}(L_{(p)}^{W})^+\subset \Der(L_{(p)}^{W})^+$.
\item\label{item:rationallemma1-item2} In the range $*<2p-3-n$, the morphisms induced by the inclusion $W_{g,1}\subset W_g$
\[\pi_*\Maps_*^{\iota}(W_{g},V_g)_{(p)}\lra\pi_*\Maps_*^{\iota}(W_{g,1},V_g)_{(p)}\cong \Der^{\iota}(L_{(p)}^{W},L_{(p)}^{V})^+\]
is injective and has image $\Der^{\iota}_{\omega}(L_{(p)}^{W},L_{(p)}^{V})^+\subset \Der^{\iota}(L_{(p)}^{W},L_{(p)}^{V})^+$.
\end{enumerate}
\end{lem}

\begin{proof}
Restriction along the inclusion $\partial W_{g,1}\subset W_{g,1}$ yields a fibration \begin{equation}\label{equ:restriction-fibration-wg}\Maps_*(W_{g,1},W_{g,1})\lra \Maps_*(\partial W_{g,1},W_{g,1})\end{equation} whose fibre at $\iota$ is $\Maps_\partial(W_{g,1},W_{g,1})$. The induced maps on homotopy groups fits in the range $0<*<2p-2-n$ into a diagram of the form
\begin{equation}\label{equ:derivationdiagram}
\begin{tikzcd}[row sep=0.5cm]
\pi_*\Maps_*^{\id}(W_{g,1},W_{g,1})_{(p)}\arrow[r]\arrow[d,"\cong"]&\pi_*\Maps_*^{\iota}(\partial W_{g,1},W_{g,1})_{(p)}\arrow[d,"\cong"]\\
\Der(L_{(p)}^{W})^+\arrow[r]\arrow[d,"\cong"]&\Der^{\iota}(L_{(p)}^{\partial W_{g,1}},L_{(p)}^{W})^+\arrow[d,"\cong"]\\
\big(s^{-(2n-1)}L_{(p)}^{W}\otimes H_{(p)}^{W}\big)^+\arrow[r,"{\left[-,-\right]}"]&\big(s^{-(2n-2)}{[L_{(p)}^{W},L_{(p)}^{W}]}\big)^+.
\end{tikzcd}
\end{equation} whose top square is provided by \cref{prop:plocalLS}, so commutes. The bottom square is given as follows: the bottom right vertical map is the evaluation at the fundamental class \[[\partial W_{g,1}]\in s^{-1}H_{(p)}^{\partial W_{g,1}}\cong\bfQ[2n-2],\] which factors as a composition of isomorphisms 
\[\Der^{\iota}(L_{(p)}^{\partial W_{g,1}},L_{(p)}^{W})^+\xlra{\cong }\Hom(s^{-1}H_{(p)}^{\partial W_{g,1}},L_{(p)}^{W})^+\xlra{\cong}\big(s^{-(2n-2)}L_{(p)}^{W}\big)^+=\big(s^{-(2n-2)}{[L_{(p)}^{W},L_{(p)}^{W}]}\big)^+\]
where the first isomorphism restricts to generators, the second isomorphism evaluates at the fundamental class, and the final equality holds for degree reasons. The latter is because elements of degree $>0$ in $s^{-(2n-2)}L_{(p)}^{W}$ correspond to elements of degree $>2n-2$ in $L_{(p)}^{W}$, so are sums of brackets since this Lie algebra is generated in degree $n-1$ as $H_W$ is supported in degree $n$. The bottom left vertical map is the restriction to positive degrees of the map
\begin{equation}
\label{equ:derivation-identification-w-w}\Der(L_{(p)}^{W})\xlra{\cong }\Hom(s^{-1}H_{(p)}^{W},L_{(p)}^{W})\cong L_{(p)}^{W}\otimes (s^{-1}H_{(p)}^{W})^{\vee}\cong s^{-(2n-1)}L_{(p)}^{W}\otimes H_{(p)}^{W},\end{equation}
 where the first isomorphism is given by restricting to generators, the second is the canonical one, and the third is induced by the intersection form on $s^{-1}H_{W_{g,1}}$  (see \cref{ex:appendix}). By construction, the composition 
 \begin{equation}\label{equ:ev-omega}\Der(L_{(p)}^{W})^+\lra \big(s^{-(2n-2)}[L_{(p)}^{W},L_{(p)}^{W}]\big)^+\end{equation}
 coincides with the evaluation at the class $\omega$ that represents the inclusion $\partial W_{g,1}\subset W_{g,1}$, so it follows from \cref{lem:appcommutative} that this square commutes up to a sign, since $\omega=\sum_{i=1}^{g}[e_i,f_i]\in L_{W_{g,1}}$ agrees up to a sign with the element \eqref{equ:canonical-element-app} from the appendix as $e_i^{\#}= f_i$ and $f_i^{\#}= e_i$ holds in the notation of the appendix up to a fixed sign depending on $n$ (which does not play a role in the argument). As the bottom horizontal map is surjective as a consequence of the graded Jacobi identity, the middle horizontal arrow is surjective as well, and hence so is the top one. A consultation of the long exact sequence in homotopy groups induced by the fibration \eqref{equ:restriction-fibration-wg} thus proves \ref{item:rationallemma1-item1}, since the kernel of the middle horizontal arrow of the diagram is $\Der_{\omega}(L_{(p)}^{W})$ as \eqref{equ:ev-omega} is given by the evaluation at $\omega$. This finishes the proof of \ref{item:rationallemma1-item1}. The proof of \ref{item:rationallemma1-item2} is completely analogous, based on the fibration sequence obtained by applying $\Maps_*(-,V_g)$ to the cofibration sequence $\partial W_{g,1}\ra W_{g,1}\ra W_g$ instead of \eqref{equ:restriction-fibration-wg}. The bottom square of the diagram corresponding to \eqref{equ:derivationdiagram} is now given by
 \begin{equation}\label{equ:derivationdiagram2}
 \begin{gathered}[b]
\begin{tikzcd}[row sep=0.5cm,baseline={(current bounding box.center)}]
\Der^\iota(L_{(p)}^{W},L_{(p)}^{V})^+\arrow[r]\arrow[dr,"\ev_\omega"]\arrow[d,"\cong"]&\Der^{\iota}(L_{(p)}^{\partial W_{g,1}},L_{(p)}^{V})^+\arrow[d,"\cong"]\\
\big(s^{-(2n-1)}L_{(p)}^{V}\otimes H_{(p)}^{W}\big)^+\arrow[r,"{\left[-,\iota_*(-)\right]}"]&\big(s^{-(2n-2)}{[L_{(p)}^{V},L_{(p)}^{V}]}\big)^+
\end{tikzcd}
\end{gathered}
\end{equation}
whose vertical arrows are given by the composition
 \[\Der^{\iota}(L_{(p)}^{\partial W_{g,1}},L_{(p)}^{V})^+\xlra{\cong }\Hom(s^{-1}H_{(p)}^{\partial W_{g,1}},L_{(p)}^{V})^+\xlra{\cong}\big(s^{-(2n-2)}L_{(p)}^{V}\big)^+=\big(s^{-(2n-2)}{[L_{(p)}^{V},L_{(p)}^{V}]}\big)^+\]
 and the restriction to elements of positive degrees of the composition
 \begin{equation}
\label{equ:derivation-identification-w-v}\Der^\iota(L_{(p)}^{W},L_{(p)}^{V})\xra{\cong}\Hom(s^{-1}H_{(p)}^{W},L_{(p)}^{V})\cong L_{(p)}^{V}\otimes (s^{-1}H_{(p)}^{W})^{\vee}\cong s^{-(2n-1)}L_{(p)}^{V}\otimes H_{(p)}^{W},\end{equation}
both completely analogous to the two compositions explained below diagram \eqref{equ:derivationdiagram}.
\end{proof}

\begin{proof}[Proof of \cref{thm:rationalmain}]
We consider the map of horizontal fibration sequences
\begin{equation}\label{equ:fibrationcomparison}
\begin{tikzcd}
\Maps_\partial(V_g,V_g)\rar{\subset}\arrow[d,equal]&\Maps_{D^{2n}}((V_g,W_{g,1}),(V_g,W_{g,1}))\rar\dar{\subset}&\Maps_\partial(W_{g,1},W_{g,1})\dar\\
\Maps_\partial(V_g,V_g)\rar{\subset}&\Maps_*(V_g,V_g)\rar{(-)\circ \iota }&\Maps_*(W_g,V_g),
\end{tikzcd}
\end{equation} where the top right horizontal arrow is induced by restriction and the rightmost vertical arrow is given by extending a selfmap of $W_{g,1}$ relative to the boundary over the complement of $W_{g,1}\subset W_{g}$ by the identity, followed by postcomposition with the inclusion $W_{g}\subset V_g$. The induced morphism $\pi_*\Maps^{\id}_*(V_g,V_g)\ra \pi_*\Maps^{\iota}_*(W_g,V_g)$ is injective because its composition with the morphism $\pi_*\Maps^{\iota}_*(W_g,V_g)\ra \pi_*\Maps^{\iota}_*(W_{g,1},V_g)$ induced by restriction along the inclusion $W_{g,1}\subset W_g$ is a retract since the composition \[\vee^g(S^n\vee S^n)\simeq W_{g,1}\subset W_g\subset V_g\simeq \vee^gS^n\] is a homotopy retraction. From the long exact sequence in homotopy groups of the bottom fibration, we see that the monoid $\pi_0\Maps_{\partial}(V_g,V_g)$ receives a surjection from $\pi_1\Maps_*^{\iota}(W_g,V_g)$, and this is a monoid homomorphism as it agrees with the map induced on $\pi_0(-)$ by the homotopy fibre inclusion of the fibre sequence of $A_\infty$-spaces
\vspace{-0.1cm}
\[\big(\Omega\Maps_*^{\iota}(W_g,V_g) \simeq \hofib_{\id}(\inc)\big)\lra \Maps_{\partial}(V_g,V_g)\xlra{\inc}\Maps_{*}(V_g,V_g).\]
The group $\pi_1\Maps_*^{\iota}(W_g,V_g)$ is abelian since we have
\[\pi_1\Maps_*^{\iota}(W_g,V_g)\cong[S^1\wedge W_g,V_g]_*\cong[S^1\wedge (\vee^{2g} S^n\vee S^{2n}),V_g]_*\cong\pi_{n+1}(V_g)^{\oplus 2g}\oplus \pi_{2n+1}(V_g),\]
using $S^1\wedge W_g\simeq S^1\wedge (\vee^{2g} S^n\vee S^{2n})$ due to the fact that the attaching map \eqref{equ:attaching-map} of the top-dimensional cell in the usual CW-decomposition of $W_g$ is a sum of Whitehead-brackets and thus nullhomotopic after suspension. Being surjected upon by an abelian group, the monoid
$\pi_0\Maps_{\partial}(V_g,V_g)$ is itself an abelian group and hence agrees with $\pi_0\hAut_{\partial}(V_g)$, as claimed in \ref{label:rationamainI}. To prove \ref{label:rationamainII}, we combine the injectivity we just observed with \cref{rationallemma1} \ref{item:rationallemma1-item2} and the long exact sequence of the bottom row in \eqref{equ:fibrationcomparison} to obtain a short exact sequence
\begin{equation}\label{equ:homotopy-groups-vg-rel-full-bdy}0\lra\Der(L_{(p)}^{V})^+\xlra{(-)\circ \iota_*}\Der^{\iota}_{\omega}(L_{(p)}^{W},L_{(p)}^{V})^+\lra\pi_{*-1}\Maps_\partial^{\id}(V_g,V_g)_{(p)}\lra0\end{equation} in the range $*<2p-3-n$. Combining this with \cref{rationallemma1} \ref{item:rationallemma1-item1}, we see that the boundary map in the long exact sequence in homotopy groups of the upper fibration  of \eqref{equ:fibrationcomparison} fits in degrees $*<2p-3-n$ into a  commutative diagram
\begin{equation}\label{equ:derivationcomposition}
\begin{tikzcd}
\pi_*\Maps^{\id}_\partial(W_{g,1},W_{g,1})_{(p)}\arrow[d,"\cong"]\arrow[rr,"\partial"]&&\pi_{*-1}\Maps_\partial(V_g,V_g)_{(p)}\arrow[d,"\cong"]\\\Der_{\omega}(L_{(p)}^{W})^+\rar{\iota_*\circ(-)}& \Der^{\iota}_{\omega}(L_{(p)}^{W},L_{(p)}^{V})^+\rar{\pi}& \frac{\Der^{\iota}_{\omega}(L_{(p)}^{W},L_{(p)}^{V})^+}{\Der(L_{(p)}^{V})^+}
\end{tikzcd}
\end{equation}where $\pi$ is the quotient map and the right vertical map is induced by \eqref{equ:homotopy-groups-vg-rel-full-bdy}. To finish the proof of \ref{label:rationamainII}, we thus need to show that the bottom composition of \eqref{equ:derivationcomposition} fits as the left vertical arrow in a diagram as in \ref{label:rationamainII}. To see this, we first combine the bottom square of \eqref{equ:derivationdiagram} with the compatible square \eqref{equ:derivationdiagram2} to obtain a commutative diagram with exact rows
\begin{equation}
\begin{tikzcd}[column sep=0.6cm]\label{equ:derivation-comparison}
0\rar&[-0.4cm]\Der_{\omega}(L_{(p)}^{W})^+\arrow["(-)\circ\iota_*",d,swap]\rar&[-0.4cm] \big(s^{-(2n-1)}L_{(p)}^{W}\otimes H_{(p)}^{W}\big)^+\arrow[r,"{\left[-,-\right]}"]\dar{\iota_*\otimes\id }&\big(s^{-(2n-2)}{[L_{(p)}^{W},L_{(p)}^{W}]}\big)^+\dar{\iota_*}\rar&[-0.4cm]0\\
0\rar&\Der^{\iota}_{\omega}(L_{(p)}^{W},L_{(p)}^{V})^+\rar&\big(s^{-(2n-1)}L_{(p)}^{V}\otimes H_{(p)}^{W}\big)^+\arrow[r,"{\left[-,\iota_*(-)\right]}"]&\big(s^{-(2n-2)}{[L_{(p)}^{V},L_{(p)}^{V}]}\big)^+\rar&0.
\end{tikzcd}
\end{equation}
Next, writing \[K\coloneq \ker(\iota_*\colon H^W\ra H^V),\] we note that there is a chain of natural isomorphisms
\begin{equation}\label{equ:iso-k-intersection}\Der(L_{(p)}^{V})\cong\Hom(H_{(p)}^{V},L_{(p)}^{V})\cong L_{(p)}^{V}\otimes (s^{-1}H_{(p)}^{V})^\vee\cong s^{-(2n-1)}L_{(p)}^{V}\otimes K_{(p)}\end{equation}
defined analogously to (and compatible with) \eqref{equ:derivation-identification-w-v}, using that the isomorphism $(H^{W})^\vee\cong H^{W}$ induced by the intersection form (neglecting grading shifts) sends $(H^{V})^\vee\subset (H^{W})^\vee$ to $K\subset H^{W}$. Except for the equivariance claim,  \ref{label:rationamainII} now follows by combining the chain of isomorphisms \eqref{equ:iso-k-intersection} with the diagrams \eqref{equ:derivationcomposition}--\eqref{equ:derivation-comparison} and the canonical chain of isomorphisms 
\[
\big(s^{-(2n-1)}L_{(p)}^{V}\otimes H_{(p)}^{W}\big)/\big( s^{-(2n-1)}L_{(p)}^{V}\otimes K_{(p)} \big)\cong s^{-(2n-1)}L_{(p)}^{V}\otimes \big(H_{(p)}^{W}/ K_{(p)} \big)\cong s^{-(2n-1)}L_{(p)}^{V}\otimes H_{(p)}^{V}.
\]
This uses that the inclusion $\hAut^{\ext}_\partial(W_{g,1})\subset \Maps_\partial(W_{g,1},W_{g,1})$ is $0$-coconnected and that we have $\hAut_\partial(V_g)=\Maps_\partial(V_g,V_g)$ by \ref{label:rationamainI}. To see the  equivariance, note that all vertical maps in the diagram of \ref{label:rationamainII} are equivariant by construction. Since they are also surjective (see the discussion after the statement), it suffices to show that the top row is equivariant. This is clear for the second map in the top row, so we are left with showing equivariance of the first map
\begin{equation}\label{equ:equivriant-map-left}
\pi_{*+1}\BhAut_\partial(W_{g,1})_{(p)}\lra s^{2n-1}H_{(p)}^{W}\otimes L_{(p)}^{W}.\end{equation}
With respect to the canonical isomorphisms in positive degrees \[\pi_{*+1}\BhAut_\partial(W_{g,1})_{(p)}\cong\pi_{*}\hAut_\partial(W_{g,1})_{(p)}\cong \pi_*\Maps_\partial(W_{g,1},W_{g,1})_{(p)},\] the action on the domain of \eqref{equ:equivriant-map-left} is induced by conjugation. Going through the proof, we see that \eqref{equ:equivriant-map-left} arises as a composition of the form
\[\pi_{*}\Maps^{\id}_\partial(W_{g,1},W_{g,1})_{(p)}\ra\pi_{*}\Maps^{\id}_*(W_{g,1},W_{g,1})_{(p)}\ra \Der(L_{(p)}^{W})\cong s^{2n-1}H_{(p)}^{W}\otimes L_{(p)}^{W}.\] The first map relaxes the boundary condition, which is equivariant. The second map is given by the isomorphism in \cref{prop:plocalLS}, and its equivariance follows from the naturality part of that proposition. The third map is provided by the chain of isomorphisms \eqref{equ:derivation-identification-w-w}, which is equivariant by the naturality of the intersection form. This finishes the proof of \ref{label:rationamainII}. To see \ref{label:rationamainIII}, note that all restrictions on the degree in the proof of  \ref{label:rationamainII} originated from the assumption on the degree in \cref{prop:plocalLS}. This proposition holds rationally without that assumption, so the proof of \ref{label:rationamainII} also applies to \ref{label:rationamainIII}.
 \end{proof}
 
In a range of degrees, the particular shape of the $p$-local homotopy groups of the space $\BhAut^{\id}_{D^{2n}}(V_g,W_{g,1})$ ensured by \cref{cor:rationalmain-cor} allows us to pass from homotopy to homology groups, which is what we are actually interested in. In the following statement, we consider the module $\oH_n(W_{g,1};\bfZ_{(p)})$ as \emph{ungraded}.

\begin{cor}\label{cor:homologyhAut}For $n\ge2$, there is an injective map of graded $\pi_0\hAut_{D^{2n}}(V_g,W_{g,1})$-modules
\[\widetilde{\oH}_{*}(\BhAut^{\id}_{D^{2n}}(V_g,W_{g,1}); \bfZ_{(p)})\longhookrightarrow\big(\oH_n(W_{g,1};\bfZ_{(p)})^{\otimes 3}\big)[n]\] in degrees $*< \min(2n-1,2p-3-n)$ for primes $p$.
\end{cor}

\begin{proof}We may assume $p> 3$, since otherwise the claim has no content. As a result of \cref{cor:rationalmain-cor}, there is an injective map of graded $\pi_0\hAut_{D^{2n}}(V_g,W_{g,1})$-modules
\[\pi_{*+1}\BhAut^{\id}_{D^{2n}}(V_g,W_{g,1})_{(p)}\longhookrightarrow s^{-(2n-1)}H^{W}_{(p)}\otimes L^{W}_{(p)}=s^{-(2n-1)}H^{W}_{(p)}\otimes \bfL(s^{-1}H^{W}_{(p)})\]
 in degrees $0<*< 2p-4-n$. Using that we have \[[s^{-1}H^{W}_{(p)},s^{-1}H^{W}_{(p)}]\subset (s^{-1}H^{W}_{(p)})^{\otimes 2}\] by antisymmetrisation and that $H^{W}_{(p)}=\widetilde{\oH}_*(W_{g,1};\bfZ_{(p)})$ is concentrated in degree $n$, we get
 \[s^{-(2n-1)}H^{W}_{(p)}\otimes \bfL(s^{-1}H^{W}_{(p)})\subset s^{-(2n-1)}H^{W}_{(p)}\otimes (s^{-1}H^{W}_{(p)})^{\otimes 2}=\big(\oH_n(W_{g,1};\bfZ_{(p)})^{\otimes 3}\big)[n-1],\]
 in degrees $*<2n-2$, so the claim holds for homotopy instead of homology groups. This leaves us with showing that the $p$-local Hurewicz homomorphism
 \[
 \pi_{*}\BhAut^{\id}_{D^{2n}}(V_g,W_{g,1})_{(p)}\lra \widetilde{\oH}_*(\BhAut^{\id}_{D^{2n}}(V_g,W_{g,1})_{(p)};\bfZ_{(p)})
 \]
 is an isomorphism in degree $*< m\coloneq \min(2n-1,2p-3-n)$. Since submodules of free $\bfZ_{(p)}$-modules are free, it follows from the first part of the proof that $n$-truncation induces a $p$-locally $m$-connected map of the form
\[\BhAut^{\id}_{D^{2n}}(V_g,W_{g,1})\lra K(A,n)\] 
where $A$ is a free $\bfZ_{(p)}$-module, so it suffices to show that $K(A,n)$ has trivial $\bfZ_{(p)}$-homology in the range $n<*<m$. Since $A$ is free, it is enough to show that $\oH_*(K(\bfZ_{(p)},n);\bfZ_{(p)})\cong\oH_*(K(\bfZ,n);\bfZ_{(p)})$ vanishes in this range, which is certainly true rationally, so we may instead prove that $\oH^*(K(\bfZ,n);\bfF_{p})$ vanishes for $n+1<*<m+1$. As the natural map \[\bfHF_p^*(\bfHZ)={\lim}_n\oH^{*+n}(K(\bfZ,n);\bfF_p)\lra \oH^{*+n}(K(\bfZ,n);\bfF_p)\] is an isomorphism in degrees $*<n$, this follows from showing that the spectrum cohomology $\bfHF_p^*(\bfHZ)$ vanishes in degrees $0<*<\min(n,2p-2-2n)$. But $\bfHF_p^*(\bfHZ)$ is a quotient of the mod $p$ Steenrod algebra $\bfHF_p^*(\bfHF_p)$ by an ideal containing the Bockstein, so $\bfHF_p^*(\bfHZ)$ vanishes in degrees $0<*<2p-2$ and we conclude the assertion. 
\end{proof} 

\section{The proof of \cref{thm:mainthm}}
\label{sect:proofmainthm}
This final section is devoted to the proof of the following refinement of \cref{thm:mainthm}.

\begin{thm}\label{thm:mainthmtechnical}For $n>3$, there is a nilpotent space $X$ and a zig-zag 
\[\BC(D^{2n})\xlra{\phi} X\xlla{\psi} \Omega_0^{\infty+1}\oK(\bfZ)\] such that $p$-locally, $\phi$ is $\min(2n-4,2p-4-n)$-connected and $\psi$ is $\min(2n,2p-4)$-connected.\end{thm}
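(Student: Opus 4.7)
The plan is to follow the homological strategy outlined in the introduction and upgrade it to a connectivity statement, using the fact that the relevant spaces are nilpotent (see \cref{section:nilpotency} and \cref{lem:plocalconnectivity}).

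For $\phi$, I would first identify $\BC(D^{2n})$ with a component of $\BlockDiff_{D^{2n}}(V_0)/\Diff_{D^{2n}}(V_0)$ (where $V_0=D^{2n+1}$), and then use the stabilisation maps of \cref{sect:stabilisation} together with Morlet's lemma of disjunction to produce a $(2n-4)$-connected map into the stable analogue
\[\BlockDiff_{D^{2n}}(V_\infty)/\Diff_{D^{2n}}(V_\infty)\simeq \hofib\bigl(\BDiff_{D^{2n}}(V_\infty)\to\BlockBDiff_{D^{2n}}(V_\infty)\bigr).\]
Passing to stable framings via the homotopy cartesian square~\eqref{equ:blockandnonblockbundlequotient} and invoking \cref{cor:stableframed} allows the block-diffeomorphism target to be replaced $p$-locally in a range by $\BlockBhAut^\cong_{D^{2n}}(V_\infty,W_{\infty,1})_\ell$.

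For $\psi$, the action on the middle homology $H_{V_\infty}$ produces, via \cref{thm:mcg}, \cref{prop:mcgframed}, and \cref{lem:finitekernel}, a map
\[\BlockBhAut^\cong_{D^{2n}}(V_\infty,W_{\infty,1})_\ell \to \BGL(\bfZ)^+\simeq \Omega^\infty_0\oK(\bfZ).\]
I would take $X$ to be the homotopy fibre of the composition $\BDiff^{\sfr}_{D^{2n}}(V_\infty;\ell_0)_\ell \to \BlockBDiff^{\sfr}_{D^{2n}}(V_\infty;\ell_0)_\ell \to \BGL(\bfZ)^+$, so that $\psi$ arises as the canonical map from the loops of the target in this fibre sequence, while $\phi$ arises from the comparison of $\BC(D^{2n})$ with the stable fibre of $\BDiff\to \BlockBDiff$ identified above. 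Nilpotency of $X$ follows from \cref{lem:nilpotent} and \cref{lem:nilpotentfibre} together with the simplicity of $\Omega^\infty_0\oK(\bfZ)$.

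The main obstacle is establishing the $p$-local connectivity of $\phi$. This requires a careful comparison of the Serre spectral sequences of the fibration
\[\BhAut^{\id}_{D^{2n}}(V_\infty,W_{\infty,1})\to \BlockBhAut^\cong_{D^{2n}}(V_\infty,W_{\infty,1})_\ell\to \mathrm{B}\pi_0\]
with its $\BDiff_{D^{2n}}(V_\infty)$-analogue, using \cref{cor:homologyhAut} to control the fibre homology, \cref{thm:mcg} and \cref{prop:mcgframed} to identify $\pi_0$ and hence the base, and Botvinnik--Perlmutter's calculation of the stable homology of $\BDiff_{D^{2n}}(V_\infty)$ (whose hypothesis forces $n>3$) to control the diffeomorphism side; the $K$-theoretic contribution is identified via Borel's stable cohomology theorem for $\GL(\bfZ)$. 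The bound $\min(2n-4,2p-4-n)$ for $\phi$ then emerges from combining the ranges in \cref{cor:stableframed} and \cref{cor:homologyhAut} with the Morlet connectivity $(2n-4)$, while the weaker bound $\min(2n,2p-4)$ for $\psi$ uses only the surgery and $K$-theoretic ingredients.
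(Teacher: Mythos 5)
Your overall strategy matches the paper's (Morlet's lemma of disjunction for the $(2n-4)$ part of $\phi$, surgery theory via \cref{cor:stableframed} for the $(2p-4-n)$ part, Botvinnik--Perlmutter for $\psi$, a Serre spectral sequence comparison converted to a connectivity statement via \cref{lem:plocalconnectivity}), but your definition of $X$ has a genuine gap: you omit the plus construction on the source. The paper defines $X$ as the homotopy fibre of $\BDiff^{\sfr}_{D^{2n}}(V_\infty;\ell_0)_\ell^+\to\BGL_\infty(\bfZ)^+$, and this plus construction is essential, not cosmetic. First, nilpotency of $X$ is deduced from \cref{lem:nilpotentfibre}, which requires the \emph{total space} of the fibration to be nilpotent: $\BDiff^{\sfr}_{D^{2n}}(V_\infty;\ell_0)_\ell^+$ is nilpotent because, by Botvinnik--Perlmutter and group completion (\cref{thm:BP}, equation~\eqref{equ:BPequivalence}), it is a connected $H$-space, whereas the un-plussed $\BDiff^{\sfr}_{D^{2n}}(V_\infty;\ell_0)_\ell$ has fundamental group surjecting onto $\GL_\infty(\bfZ)$ by \cref{prop:mcgframed} and so is not nilpotent. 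Second, the claimed $\min(2n,2p-4)$-connectivity of $\psi$ is read off the fibre sequence $\Omega\BGL_\infty(\bfZ)^+\to X\to \BDiff^{\sfr}_{D^{2n}}(V_\infty;\ell_0)_\ell^+$ and is equivalent to the $p$-local $\min(2n,2p-4)$-connectedness of the total space (\cref{cor:BPcorollary}, using the identification with $\Omega^\infty_0\Sigma^\infty_+\oO/\oO(2n+1)$); without the plus construction the total space has a huge, non-nilpotent fundamental group, so your $\psi$ would not even be $p$-locally $1$-connected.

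Two smaller inaccuracies: Borel's theorem plays no role in the proof of \cref{thm:mainthmtechnical} -- the identification with $K$-theory is simply $\BGL_\infty(\bfZ)^+\simeq\Omega^\infty_0\oK(\bfZ)$; what is actually needed to compare $\BG^{\ext}_{\infty,\ell}$ with $\BGL(H_{V_\infty})$ is Betley's homological stability for $\GL_g(\bfZ)$ with polynomial coefficients (\cref{lem:compareautomorphismgroups}). Your spectral-sequence comparison for $\phi$ also needs the ``centre kills'' trick via $-\id\in G^{\ext}_{g,\ell}$ (\cref{lem:negativeidentity}) and an explicit appeal to Zeeman's comparison theorem together with control of the $\pi_1$-actions on fibre homology, none of which appears in your outline.
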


\begin{rem}As remarked in the introduction, our proof is independent of Waldhausen's work on pseudoisotopy theory. If one is willing to invest Dwyer--Weiss--Williams' index theorem \cite{DWW} (which relies in parts of Waldhausen's work), then it takes little effort to also explicitly identify $X$ and $\psi$ in terms of well-known infinite loop spaces, see \cref{sec:reformulation}. 
\end{rem}

As a first step towards proving \cref{thm:mainthmtechnical}, we replace $\BC(D^{2n})$ by an equivalent space that is more convenient to compare to the various automorphism spaces of high-dimensional handlebodies $V_g=\natural ^gD^{n+1}\times S^n$ we studied in the previous sections.

\begin{lem}\label{lem:differentmodel}For $d\ge5$, there exists a homotopy equivalence \[\BC(D^{d})\simeq \BlockDiff_{D^{d}}(D^{d+1})/\Diff_{D^{d}}(D^{d+1})=\hofib(\BDiff_{D^{d}}(D^{d+1})\ra\BlockBDiff_{D^{d}}(D^{d+1}))\]
\end{lem}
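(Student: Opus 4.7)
The plan is to identify $\oC(D^d)$ as a space of relative diffeomorphisms of $D^{d+1}$ and then exhibit the block analogue as weakly contractible. Under the standard diffeomorphism $D^d \times I \cong D^{d+1}$ obtained by smoothing the corner, the closed subspace $D^d \times \{0\} \cup \partial D^d \times I$ that is fixed in the definition of $\oC(D^d)$ corresponds to a disc $D^d \subset \partial D^{d+1} = S^d$. Matching the definitions directly yields $\oC(D^d) = \Diff_{D^d}(D^{d+1})$ and hence $\BC(D^d) \simeq \BDiff_{D^d}(D^{d+1})$. With the notational convention from \cref{sect:structurespace}, the symbol $\BlockDiff_{D^d}(D^{d+1})/\Diff_{D^d}(D^{d+1})$ stands for the homotopy fibre of the canonical comparison map $\BDiff_{D^d}(D^{d+1}) \to \BlockBDiff_{D^d}(D^{d+1})$, so the lemma reduces to showing that $\BlockBDiff_{D^d}(D^{d+1})$ is contractible, equivalently that the semi-simplicial group $\BlockDiff_{D^d}(D^{d+1})$ is weakly contractible.

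For the latter, I would appeal to the restriction fibration onto the free hemisphere of $\partial D^{d+1}$,
\[
\BlockDiff_\partial(D^{d+1}) \lra \BlockDiff_{D^d}(D^{d+1}) \lra \BlockDiff_\partial(D^d),
\]
whose quasi-fibration property on geometric realizations follows from a block version of the isotopy extension theorem, available in the range $d \ge 5$. Both the fibre and the base are weakly contractible by the classical block Alexander trick: the Alexander formula $\phi_t(x) = t\phi(x/t)$ (extended by the identity for $\lvert x \rvert > t$), which fails to be smooth at $t = 0$ for ordinary diffeomorphisms, does assemble into an ``extra degeneracy'' on the semi-simplicial group $\BlockDiff_\partial(D^k)_\bullet$ because the collaring condition built into the definition of $\BlockDiff$ absorbs the non-smoothness into the simplicial direction. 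Since the fibre and base are weakly contractible, so is the total space, which is what we need.

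The main technical obstacle is making both the block Alexander trick and the block isotopy extension result precise at the semi-simplicial level of \cref{sect:blockspaces}; both statements are classical, but the smoothness of the resulting diffeomorphisms of $\Delta^p \times D^{d+1}$ and their compatibility with the collaring convention have to be checked carefully. Once these standard facts are in hand, the proof of the lemma consists only of the two short identifications described in the first paragraph.
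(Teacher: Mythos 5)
Your first paragraph is correct and essentially identical to the paper's: under the corner-smoothing identification $D^d\times I\cong D^{d+1}$, one has $\oC(D^d)\cong\Diff_{D^d}(D^{d+1})$, and the lemma reduces to showing that $\BlockDiff_{D^d}(D^{d+1})$ is weakly contractible. But the second paragraph contains a genuine error: it is \emph{not} true that $\BlockDiff_\partial(D^k)$ is weakly contractible, and the proposed ``block Alexander trick'' does not exist in the smooth category. By the combinatorial description of homotopy groups of a Kan semi-simplicial group — the very identification the paper uses — one has
\[
\pi_j\bigl(\BlockDiff_\partial(D^k);\id\bigr)\cong\pi_0\Diff_\partial(D^{k+j})\cong\Theta_{k+j+1}
\]
for $k+j\geq 5$, the group of homotopy $(k+j+1)$-spheres, which is well known to be nontrivial (e.g.\ $\Theta_7\cong\bfZ/28$). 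The collaring condition does not ``absorb the non-smoothness'' of the Alexander homotopy $\phi_t(x)=t\phi(x/t)$: the issue at $t=0$ persists for smooth block diffeomorphisms precisely because the simplices are required to be smooth diffeomorphisms of $\Delta^p\times D^k$, and the failure of the Alexander trick to smooth is what makes $\Theta_*$ nontrivial in the first place. If both fibre and base of your proposed fibration sequence were contractible, the long exact sequence would force all $\Theta_*$ in a range to vanish, which is false.

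What the paper actually uses is that the space $\BlockDiff_{D^d}(D^{d+1})$ fixes only a \emph{hemisphere} of the boundary, and this is the crucial distinction. The same combinatorial description gives
\[
\pi_k\bigl(\BlockDiff_{D^d}(D^{d+1});\id\bigr)\cong\pi_0\Diff_{D^{d+k}}(D^{d+k+1})\cong\pi_0\oC(D^{d+k}),
\]
and these groups vanish by Cerf's theorem that $\oC(M)$ is connected for $M$ simply connected of dimension $\geq 5$. No contractibility of $\BlockDiff_\partial$ of a disc is needed or true; the input is Cerf's pseudoisotopy theorem rather than any form of the Alexander trick. Your fibration $\BlockDiff_\partial(D^{d+1})\to\BlockDiff_{D^d}(D^{d+1})\to\BlockDiff_\partial(D^d)$ is fine as a fibration, and combined with the correct computation of the outer homotopy groups it recovers the classical fact that concordance classes mediate between $\Theta_{d+2}$ and $\Theta_{d+1}$ — but that is exactly why the outer terms cannot be contractible.
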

\begin{proof}
A choice of identification $D^{d}\times [0,1]\cong D^{d+1}$ by smoothing corners induces an equivalence $\BC(D^{d})\simeq \BDiff_{D^{d}}(D^{d+1})$, so the claim is equivalent to showing that the space of block diffeomorphisms $\BlockDiff_{D^{d}}(D^{d+1})$ is contractible. This follows from Cerf's result that every concordance of a manifold of dimension at least $5$ is isotopic to the identity \cite{Cerf}, together with the chain of isomorphisms $\pi_k(\BlockDiff_{D^{d}}(D^{d+1});\id)\cong \pi_0\Diff_{D^{d+k}}(D^{d+k+1})\cong \pi_0\oC(D^{d+k})$ whose first isomorphism is most easily seen from the combinatorial description of the homotopy groups of the Kan complex $\BlockDiff_{D^{d}}(D^{d+1})_\bullet$ (see \cref{section:diffeomorphisms}).
\end{proof}

The alternative point of view on $\BC(D^{2n})$ as the homotopy fibre \[\BlockDiff_{D^{2n}}(D^{2n+1})/\Diff_{D^{2n}}(D^{2n+1})=\BlockDiff_{D^{2n}}(V_0)/\Diff_{D^{2n}}(V_0)\] is advantageous as it makes a stabilisation map of the form \begin{equation}\label{equ:stabilisationBlockdiffDiff}\BC(D^{2n})\simeq\BlockDiff_{D^{2n}}(V_0)/\Diff_{D^{2n}}(V_0)\lra \BlockDiff_{D^{2n}}(V_g)/\Diff_{D^{2n}}(V_g),\end{equation} apparent, which is induced by iterating the stabilisation maps for $\BDiff_{D^{2n}}(V_g)$ and its block analogue explained in \cref{sect:stabilisation}. It is a consequence of Morlet's lemma of disjunction that this map is highly connected:

\begin{lem}\label{lem:Morletlemma}The stabilisation map \eqref{equ:stabilisationBlockdiffDiff} is $(2n-4)$-connected.
\end{lem}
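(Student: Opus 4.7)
The plan is to analyse the one-step stabilisation $F_g\to F_{g+1}$, with $F_h\coloneq\BlockDiff_{D^{2n}}(V_h)/\Diff_{D^{2n}}(V_h)$, and show it is $(2n-4)$-connected for all $h\ge0$; iterating $g$ times then yields the claim after invoking \cref{lem:differentmodel} to identify $F_0\simeq\BC(D^{2n})$.

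For the one-step analysis, I would use the decomposition $V_{g+1}=V_g\cup H$ from \cref{sect:stabilisation}, with $H$ the added handle, and set up the commutative diagram of horizontal restriction fibrations
\begin{center}
\begin{tikzcd}[column sep=small]
\Diff_{D^{2n}}(V_g) \rar\dar& \Diff_{D^{2n}}(V_{g+1}) \rar\dar& \Emb^{\partial_-}(H,V_{g+1}) \dar\\
\BlockDiff_{D^{2n}}(V_g) \rar& \BlockDiff_{D^{2n}}(V_{g+1}) \rar& \widetilde{\Emb}^{\partial_-}(H,V_{g+1})
\end{tikzcd}
\end{center}
in which the leftward identifications come from the fact that extending a (block) diffeomorphism over $H$ by the identity identifies $\Diff_{D^{2n}}(V_g)$ with the subgroup of $\Diff_{D^{2n}}(V_{g+1})$ fixing $H$ pointwise (and similarly in the block case); the right column consists of (block) neat embeddings of $H$ into $V_{g+1}$ extending the standard inclusion along the attaching region $\partial_-H$ together with the new distinguished disc. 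Taking vertical homotopy fibres yields a fibration sequence
\[
F_g \lra F_{g+1} \lra \hofib\bigl(\Emb^{\partial_-}(H,V_{g+1})\lra \widetilde{\Emb}^{\partial_-}(H,V_{g+1})\bigr),
\]
in which the first map is exactly the stabilisation $F_g\to F_{g+1}$.

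Consequently, the connectivity of the stabilisation is controlled by the connectivity of the right-hand homotopy fibre, which is the content of Morlet's lemma of disjunction for embeddings (in the formulation of Burghelea--Lashof--Rothenberg, or recalled in Berglund--Madsen): the comparison $\Emb(P,N)\to\widetilde{\Emb}(P,N)$ is $(2(d-p)-c)$-connected for a small constant $c$, where $p$ is the handle dimension of $P$ relative to its fixed boundary and $d=\dim N$. Relative to its fixed boundary, $H$ has handle dimension $p=n$ inside $V_{g+1}$ of dimension $d=2n+1$, so Morlet's lemma yields that the fibre above is $(2n-3)$-connected and hence that $F_g\to F_{g+1}$ is $(2n-4)$-connected. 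Since this bound is uniform in $g$, iterating produces the desired $(2n-4)$-connectivity of the composite $F_0\to F_g$.

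The main obstacle is setting up the boundary data in the embedding spaces carefully enough that both the horizontal fibration sequences are valid and the identification of vertical homotopy fibres yields precisely $F_g$ and $F_{g+1}$; concurrently, one must cite (or extract) the form of Morlet's disjunction lemma giving the $2(d-p)$-type range, since more naive forms of the lemma yield strictly weaker bounds that are insufficient to reach $2n-4$.
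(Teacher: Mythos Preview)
Your approach is correct in outline but takes a genuinely different route from the paper's. You analyse the one-step stabilisation $F_g\to F_{g+1}$ inductively via the restriction-to-$H$ fibration, landing in (block) embedding spaces of the added handle; the paper instead compares $F_0$ to $F_g$ in one shot via the restriction-to-boundary fibration
\[
\Diff_\partial(V_g)\lra \Diff_{D^{2n}}(V_g)\lra \Diff^{\ext}_\partial(W_{g,1})
\]
and its block analogue. Taking vertical $\BlockDiff/\Diff$-quotients yields a map of fibre sequences whose \emph{outer} terms are $\BlockDiff_\partial(D^{2n+1})/\Diff_\partial(D^{2n+1})\to \BlockDiff_\partial(V_g)/\Diff_\partial(V_g)$ and $\BlockDiff_\partial(D^{2n})/\Diff_\partial(D^{2n})\to \BlockDiff_\partial(W_{g,1})/\Diff_\partial(W_{g,1})$, and to each of these the form of Morlet's lemma stated in \cite[p.\,28, Lem.\,a]{BLR} applies directly (relative handle dimension $n$, targets $(n-1)$-connected), giving $(2n-4)$-connectivity; the five-lemma then finishes the argument.

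The practical difference is exactly the obstacle you flagged: the paper's decomposition lets it cite the BLR formulation of Morlet for $\BlockDiff_\partial/\Diff_\partial$ verbatim, whereas your route requires either an embedding-space version of Morlet with the correct constant or a derivation of that version from the diffeomorphism one (which is possible but adds work). Your approach has the conceptual advantage of isolating the effect of a single handle and would generalise more readily to other stabilisation maps; the paper's buys a shorter proof by reducing to a reference that is already in the exact form needed.
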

\begin{proof}
Taking vertical homotopy fibres in the diagram (see \cref{sect:autVg} for the notation)
\[
\begin{tikzcd}[column sep=-0.3cm,row sep=0.1cm]
&\BDiff_\partial(V_g)\arrow[rr]\arrow[dd]&&\BDiff_{D^{2n}}(V_g)\arrow[rr]\arrow[dd]&&\BDiff^{\ext}_{\partial}(W_{g,1})\arrow[dd]\\
\BDiff_\partial(D^{2n+1})\arrow[rr,crossing over]\arrow[ur]&&\BDiff_{D^{2n}}(D^{2n+1})\arrow[rr,crossing over]\arrow[ur]&&\BDiff^{\ext}_{\partial}(D^{2n})\arrow[ur]&\\
&\BlockBDiff_\partial(V_g)\arrow[rr]&&\BlockBDiff_{D^{2n}}(V_g)\arrow[rr]&&\BlockBDiff^{\ext}_{\partial}(W_{g,1})\\
\BlockBDiff_\partial(D^{2n+1})\arrow[rr]\arrow[ur]\arrow[from=uu, crossing over]&&\BlockBDiff_{D^{2n}}(D^{2n+1})\arrow[rr]\arrow[ur]\arrow[from=uu, crossing over]&&\BlockBDiff^{\ext}_{\partial}(D^{2n})\arrow[ur]\arrow[from=uu, crossing over]&
\end{tikzcd}
\] of fibre sequences whose diagonal arrows are given by the iterated stabilisation maps results in a map of fibre sequences of the form
\[
\begin{tikzcd}[column sep=-0.1cm,row sep=0.2cm]
&\frac{\BlockDiff_\partial(V_g)}{\Diff_\partial(V_g)}\arrow[rr]&&\frac{\BlockDiff_{D^{2n}}(V_g)}{\Diff_{D^{2n}}(V_g)}\arrow[rr]&&\frac{\BlockDiff^{\ext}_{\partial}(W_{g,1})}{\Diff^{\ext}_{\partial}(W_{g,1})}\arrow[r,phantom,"\simeq"]&\frac{\BlockDiff_{\partial}(W_{g,1})}{\Diff_{\partial}(W_{g,1})}\\
\frac{\BlockDiff_\partial(D^{2n+1})}{\Diff_\partial(D^{2n+1})}\arrow[rr,crossing over]\arrow[ur]&&\frac{\BlockDiff_{D^{2n}}(D^{2n+1})}{\Diff_{D^{2n}}(D^{2n+1})}\arrow[rr,crossing over]\arrow[ur]&&\frac{\BlockDiff^{\ext}_{\partial}(D^{2n})}{\Diff^{\ext}_{\partial}(D^{2n})}\arrow[ur]\arrow[r,phantom,"\simeq"]&\frac{\BlockDiff_{\partial}(D^{2n})}{\Diff_{\partial}(D^{2n})},&
\end{tikzcd}
\]
whose inner diagonal map is the map in question and whose rightmost equivalences follow from another application of Cerf's result mentioned in the previous proof. As the manifolds $V_g$ and $W_{g,1}$ are $(n-1)$-connected, the two outer diagonal maps are $(2n-4)$-connected by a form of Morlet's lemma of disjunction \cite[p.\,29, Cor.\,3.2]{BLR}, so the claim follows from the induced ladder of long exact sequences in homotopy groups.\end{proof}

Combining the previous two lemmas results in a $(2n-4)$-connected map
\begin{equation}\label{equ:Morletmap}\BC(D^{2n})\lra \BlockDiff_{D^{2n}}(V_\infty)/\Diff_{D^{2n}}(V_\infty)\coloneq \hocolim_g\BlockDiff_{D^{2n}}(V_g)/\Diff_{D^{2n}}(V_g)\end{equation}
to the homotopy colimit over the stabilisation maps, so in order to prove \cref{thm:mainthmtechnical}, it remains to establish a zig-zag with the claimed connectivity properties between this homotopy colimit and the zero component of the once looped algebraic $K$-theory space of the integers $\Omega_0^{\infty+1}\oK(\bfZ)$, which we model as the plus-construction\footnote{We take all plus-constructions with respect to the unique maximal perfect subgroup, i.e.\ the intersection of the terms in the transfinite derived series. It follows from maximality that this subgroup is automatically normal.}
\[\Omega_0^{\infty+1}\oK(\bfZ)\simeq \BGL_\infty(\bfZ)^+\]
of the homotopy colimit $\BGL_\infty(\bfZ)\coloneq \hocolim_g\BGL_g(\bfZ)$ over the stabilisation maps induced by the usual block inclusions $\GL_g(\bfZ)\subset\GL_{g+1}(\bfZ)$. The zig-zag we construct arises as part of a commutative zig-zag of horizontal homotopy fibre sequences of the form \begin{equation}\label{equ:bigdiagram}
\begin{tikzcd}[row sep=0.6cm]
\BlockDiff_{D^{2n}}(V_\infty)/\Diff_{D^{2n}}(V_\infty)\arrow[r]\arrow[d]&\BDiff^{\sfr}_{D^{2n}}(V_\infty;\ell_0)_\ell\arrow[d]\arrow[r]&\BlockBDiff^{\sfr}_{D^{2n}}(V_\infty;\ell_0)_\ell\arrow[d,"\circled{2}"]\\
X\arrow[r]&\BDiff^{\sfr}(V_\infty;\ell_0)_\ell^+\arrow[r]&\BGL_\infty(\bfZ)^+\\
\Omega_0\BGL_\infty(\bfZ)^+\arrow[r]\arrow[u,"\circled{3}",swap]&*\arrow[r]\arrow[u,"\circled{1}",swap]&\BGL_\infty(\bfZ)^+\langle 1\rangle\arrow[u],
\end{tikzcd}
\end{equation}  which we explain now.
Denoting the tangential structure encoding stable framings by $\sfr\colon \EO\ra \BO$, the upper right corner is defined as the homotopy colimit \[\BlockBDiff^{\sfr}_{D^{2n}}(V_\infty;\ell_0)_\ell\coloneq \hocolim_g\BlockBDiff^{\sfr}_{D^{2n}}(V_g;\ell_0)_\ell\] along the stabilisation maps explained in \cref{sect:stabilisation}, and the space $\BDiff^{\sfr}_{D^{2n}}(V_\infty;\ell_0)_\ell$ is defined analogously, using the unstable $(2n+1)$-dimensional tangential structure induced by $\sfr\colon \EO\ra \BO$, which we denote by the same symbol (see \cref{sec:stable-tangential-structures}). The upper right horizontal map is the homotopy colimit of the comparison map \[\BDiff^{\sfr}_{D^{2n}}(V_g;\ell_0)_\ell\lra \BlockBDiff^{\sfr}_{D^{2n}}(V_g;\ell_0)_\ell\] whose homotopy fibre at the base point is canonically equivalent to $\BlockDiff_{D^{2n}}(V_\infty)/\Diff_{D^{2n}}(V_\infty)$ in view of \cref{lem:forgetting-is-cartesian} and the fact that homotopy fibres commute with sequential homotopy colimits. Using a functorial model of the plus-construction (see e.g.\,\cite[VII.6.2]{BousfieldKan}), the upper right square of \eqref{equ:bigdiagram} is induced by the homotopy colimit of the composition
\begin{equation}\label{eq:homology-action-proof-main-thm}\BDiff^{\sfr}(V_g;\ell_0)_\ell\lra \BlockBDiff^{\sfr}(V_g;\ell_0)_\ell\lra\BGL(H_{V_g})=\BGL(\oH_n(V_g;\bfZ))\cong \BGL_g(\bfZ)\end{equation} along the stabilisation maps (see \cref{sect:stabilisation}), where the second map is induced by the action on the middle homology of $V_g$. The space $X$ is defined as the homotopy fibre of the map $\BDiff^{\sfr}(V_\infty;\ell_0)_\ell^+\ra\BGL_\infty(\bfZ)^+$, and it receives a map from the top left corner induced by the commutativity of the upper right square. The bottom row is induced by the inclusion of the basepoint in the universal cover $\BGL_\infty(\bfZ)^+\langle 1\rangle$ whose homotopy fibre agrees with the base point component of $\Omega \BGL_\infty(\bfZ)^+$. This explains the diagram \eqref{equ:bigdiagram}, aside from the map of fibre sequences from the bottom to the middle row, which is induced by the universal cover $\BGL_\infty(\bfZ)^+\langle 1\rangle\ra\BGL_\infty(\bfZ)^+$ and the basepoint inclusion of $\BDiff^{\sfr}(V_\infty;\ell_0)_\ell^+$. 

\medskip

In the two following subsections, we continue the preparations of the proof of \cref{thm:mainthmtechnical} by analysing the vertical maps \circled{$1$} and \circled{$2$}. 

\subsection{The stable homology of $\BDiff_{D^{2n}}(V_g)$ and the map \circled{$1$}}
\label{sect:BPSection}Botvinnik and Perlmutter \cite{BotvinnikPerlmutter} have computed the stable homology of $\BDiff^\theta_{D^{2n}}(V_g;\ell_0)_\ell$ in homotopy theoretical terms for all tangential structures $\theta\colon B\ra\BSO(2n+1)$ whose space $B$ is $n$-connected. For us, their main result \cite[Cor.\,6.8.1, Prop.\,6.14]{BotvinnikPerlmutter} is most conveniently expressed as an identification of the group completion of the disjoint union
\[\textstyle{\cM_{\theta}\coloneq \coprod_{g\ge0}\BDiff_{D^{2n}}^{\theta}(V_g;\ell_0)_\ell},\] which becomes a homotopy commutative topological monoid under boundary connected sum when choosing an appropriate point-set model (see \cite[Prop.\,6.11, Prop.\,6.14]{BotvinnikPerlmutter}). 
\begin{thm}[Botvinnik--Perlmutter]\label{thm:BP}For $n>3$, a tangential structure $\theta\colon B\ra\BSO(2n+1)$ for which $B$ is $n$-connected, there is a homotopy equivalence of the form \[\Omega\mathrm{B}\cM_{\theta}\simeq \Omega^\infty\Sigma^\infty_+B.\]
\end{thm}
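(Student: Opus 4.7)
Since this is attributed to Botvinnik--Perlmutter, the plan is to sketch the strategy I would pursue, modelled on the Galatius--Madsen--Tillmann--Weiss approach to moduli spaces of manifolds and its extension to manifolds with boundary/handle structure.

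The first step is to realise the monoid $\cM_\theta$ as the endomorphisms at a distinguished object in an auxiliary topological (or semi-simplicial) category $\catsingle{H}_\theta$ of \emph{handle cobordisms}. Objects should be closed $2n$-dimensional $\theta^{\partial}$-manifolds embedded in some $\bfR^{\infty-1}\subset\bfR^\infty$ (where $\theta^{\partial}$ denotes the pullback of $\theta$ along $\BSO(2n)\to\BSO(2n+1)$), and morphisms from $P$ to $Q$ should be compact $(2n+1)$-dimensional $\theta$-cobordisms $W\colon P\leadsto Q$ that are built from $P\times I$ by attaching handles only of index $\le n+1$, equipped with a collared embedding in $[0,t]\times\bfR^{\infty-1}$ and a fixed collar on a chosen disc $D^{2n}\subset P$. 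Taking the distinguished object $S^{2n}$ (with its unique $\theta^\partial$-structure up to the relevant components), the endomorphism monoid recovers $\cM_\theta$ up to group-completion on the nose, by the standard monoid-in-cobordism-category argument. Consequently $\Omega\oB\cM_\theta$ embeds as a loopspace in $\Omega\oB\catsingle{H}_\theta$ at the component of $S^{2n}$, and one shows via the usual argument (cf.\ Galatius--Randal-Williams) that this inclusion is an equivalence because every object is reachable from $S^{2n}$ by a morphism in $\catsingle{H}_\theta$ when $n\ge 3$.

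The second step is to compute $\Omega\oB\catsingle{H}_\theta$ by Pontryagin--Thom / scanning. A morphism $W\subset [0,t]\times\bfR^{\infty-1}$ has a trivialised normal bundle of corank one, and the $\theta$-structure on its stable tangent bundle translates into a scanning map. In the Galatius--Randal-Williams setup for general cobordism categories one obtains $\Omega^\infty\MT\theta$, but here the handle-restriction (indices $\le n+1$ only) prunes the answer. Specifically, the core of each attached $(n+1)$-handle is an embedded disc whose boundary sphere carries the $\theta$-data, and such data is classified by a point of the space $B$ since $B$ is $n$-connected (so the attaching sphere has no obstruction to $\theta$-lifting and all $\theta$-structures on a disc are equivalent relative to the boundary). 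Summing over handles via boundary connected sum corresponds, after scanning, to taking the free $E_\infty$-space on $B$. The outcome is an identification of the group-completed classifying space with $\Omega^\infty \Sigma^\infty_+ B$.

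The third step, which is the main obstacle, is to justify rigorously that the handle-restricted cobordism category has the same classifying space as the plausible ``scanned'' infinite loop space $\Omega^\infty\Sigma^\infty_+B$. This requires a parametrised handle-trading / surgery argument to show that within $\catsingle{H}_\theta$ any odd-dimensional $\theta$-cobordism between highly connected $2n$-manifolds can be deformed to a handlebody cobordism, together with a fibre-sequence argument comparing the full cobordism category to its handlebody subcategory. The assumption $n>3$ enters precisely here: isotopy uniqueness of embedded discs and their boundary spheres in a $(2n+1)$-manifold requires $n+1<\tfrac{2n+1}{2}$ with enough room to do Whitney tricks, which in Botvinnik--Perlmutter's set-up forces $2n+1>7$. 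Once this parametrised surgery step is in hand, the identification follows by combining the scanning theorem with the $n$-connectedness of $B$ as sketched above.
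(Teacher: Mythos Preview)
The paper does not prove this theorem; it quotes it as a black box from Botvinnik--Perlmutter (see the sentence preceding the statement and the citation to \cite[Cor.\,6.8.1, Prop.\,6.14]{BotvinnikPerlmutter}). There is therefore no ``paper's own proof'' to compare your proposal against. The only input the present paper requires is the equivalence \eqref{equ:BPequivalence} and its consequence \cref{cor:BPcorollary}.

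That said, your outline is broadly in the spirit of Botvinnik--Perlmutter's actual argument: they do work with a cobordism category whose morphisms are restricted by handle index and establish a parametrised surgery result \`a la Galatius--Randal-Williams to identify its classifying space. A couple of inaccuracies are worth flagging if you intend to flesh this out. First, their general result does not land directly in $\Omega^\infty\Sigma^\infty_+B$; it lands in the infinite loop space of a Thom spectrum built from a truncated Grassmannian, and the simplification to $\Sigma^\infty_+B$ is a separate (easy) step that uses the $n$-connectivity of $B$ to trivialise the relevant bundle---your ``second step'' conflates these and the heuristic about summing handles is not how the identification actually goes. Second, the role of the restriction $n>3$ is not quite the Whitney-trick bound you describe; it enters in their handle-index filtration and the half-dimensional surgery below the middle dimension (see \cite[Rem.\,1.1]{BotvinnikPerlmutter} and the remark at the end of the introduction of the present paper).
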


The equivalence they produce factors as a composition
\begin{equation}\label{equ:BP-chain}\Omega\mathrm{B} \cM_{\theta}\xlra{\simeq}\Omega\mathrm{B}\cC^\partial_\theta\xlra{\simeq}\Omega^\infty\Sigma^\infty_+B\end{equation}
whose intermediate terms is the group-completion of a topological category $\cC_\theta^\partial$ of bordisms between $2n$-manifolds with $\theta$-structures, possibly with boundary. This category was studied by Genauer \cite{Genauer}, who established the final equivalence in \eqref{equ:BP-chain} as a parametrised form of the Pontryagin--Thom construction. Botvinnik and Perlmutter showed that $\cM_{\theta}$ can be seen as a submonoid of the endomorphism monoid $\cC_\theta^\partial(D^{2n},D^{2n})$ of the closed disc $D^{2n}$ with some $\theta$-structure and that the chain of inclusions $\cM_{\theta}\subset \cC_\theta^\partial(D^{2n},D^{2n})\subset \cC_\theta^\partial$ is an equivalence upon taking classifying spaces (see \cite[Thm 6.3, Prop.\ 6.14]{BotvinnikPerlmutter}).

\medskip

As $\cM_\theta$ is homotopy commutative, Randal-Williams' elucidation of the group completion theorem \cite[Cor.\,1.2]{RWperfection} moreover provides an equivalence of the form \begin{equation}\label{equ:BPequivalence}\BDiff_{D^{2n}}^{\theta}(V_\infty;\ell_0)_\ell^+\simeq \Omega_0\mathrm{B}\cM_\theta,\end{equation}
which leads to the following consequence of \cref{thm:BP} when specialised to the tangential tangential structure encoding stable framings.

\begin{cor}\label{cor:BPcorollary}The space $\BDiff_{D^{2n}}^{\sfr}(V_\infty;\ell_0)_\ell^+$ is nilpotent and $p$-locally $\min(2n,2p-4)$-connected as long as $n>3$.
\end{cor}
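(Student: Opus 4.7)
The plan is to combine Theorem~\ref{thm:BP} with the group completion equivalence \eqref{equ:BPequivalence} in order to identify $\BDiff^{\sfr}_{D^{2n}}(V_\infty;\ell_0)_\ell^+$ with the unit component of an infinite loop space of the form $\Omega^\infty\Sigma^\infty_+ B_{2n+1}$, and then read off both the nilpotency and the connectivity bound from standard stable homotopy theory.

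First, I would verify that the unstable tangential structure $\sfr_{2n+1}\colon B_{2n+1}\ra\BSO(2n+1)$ induced from $\sfr\colon\EO\ra\BO$ (implicitly restricted to its oriented component) satisfies the $n$-connectivity hypothesis of Theorem~\ref{thm:BP}. Contractibility of the total space of the universal stable oriented bundle exhibits $B_{2n+1}\ra\BSO(2n+1)$ as a fibration whose fibre is $\SO$, and a routine long exact sequence chase using the $(2n+1)$-connectivity of the stabilisation $\BSO(2n+1)\ra\BSO$ on homotopy groups shows that $B_{2n+1}$ is in fact $2n$-connected, well beyond what is required.

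Given this, Theorem~\ref{thm:BP} together with \eqref{equ:BPequivalence} produces an equivalence
\[\BDiff^{\sfr}_{D^{2n}}(V_\infty;\ell_0)_\ell^+\simeq \Omega^\infty_0\Sigma^\infty_+ B_{2n+1}.\]
As the unit component of an infinite loop space, the right hand side is a simple space and in particular nilpotent, settling the first part of the claim.

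For the connectivity, I would use the standard stable splitting $\Sigma^\infty B_{2n+1,+}\simeq \bfS\vee\Sigma^\infty B_{2n+1}$ to decompose $\pi_k(\Omega^\infty_0\Sigma^\infty_+ B_{2n+1})\cong \pi^s_k\oplus \pi^s_k(B_{2n+1})$ for $k\ge 1$. The $2n$-connectivity of $B_{2n+1}$ kills the second summand in degrees $k\le 2n$, while Serre's theorem that $\pi^s_k$ is $p$-torsion free for $k<2p-3$ kills the first summand $p$-locally in degrees $0<k\le 2p-4$. Combining the two ranges yields the asserted $\min(2n,2p-4)$-connectivity. No substantial obstacle is expected; the argument is essentially a direct consequence of the Botvinnik--Perlmutter identification and Serre's theorem on stable stems, with the only mild bookkeeping being the passage to the oriented version of the stable framing structure so that $B_{2n+1}$ is connected and Theorem~\ref{thm:BP} genuinely applies.
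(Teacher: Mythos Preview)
Your proposal is correct and follows essentially the same approach as the paper: identify $B_{2n+1}$ as the $2n$-connected homotopy fibre of the stabilisation map (the paper writes it as $\oO/\oO(2n+1)$, which is equivalent to your $\SO/\SO(2n+1)$), invoke Theorem~\ref{thm:BP} with \eqref{equ:BPequivalence}, and deduce the connectivity bound from Serre's result on $p$-torsion in the stable stems. Your explicit use of the stable splitting $\Sigma^\infty_+B_{2n+1}\simeq\bfS\vee\Sigma^\infty B_{2n+1}$ is a slightly more detailed bookkeeping than the paper's one-line observation that the homotopy groups agree with $\pi^s_*$ in degrees $\le 2n$, but the content is identical.
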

\begin{proof}Connected $H$-spaces are nilpotent, so \eqref{equ:BPequivalence} settles the nilpotency claim. Regarding the connectivity part of the statement, note that the unstable $(2n+1)$-dimensional tangential structure $(\sfr)_{2n+1}$ induced by the stable structure $\sfr\colon \EO\ra\BO$ (see  \cref{sec:stable-tangential-structures}) is equivalent to the inclusion $\oO/\oO(2n+1)\ra\BO(2n+1)$ of the homotopy fibre of the stabilisation $\BO(2n+1)\ra\BO$, so \cref{thm:BP} and the discussion preceding this corollary show that the space in question is equivalent to $\Omega^\infty_0\Sigma^\infty_+\oO/\oO(2n+1)$. As $\oO/\oO(2n+1)$ is $2n$-connected, the homotopy groups of $\Omega^\infty_0\Sigma^\infty_+\oO/\oO(2n+1)$ agree in positive degrees less than $2n+1$ with the stable homotopy groups of spheres which are free of $p$-torsion in degrees less than $2p-3$ by a result of Serre \cite[p.\,498, Prop.\,5]{Serre}, so the claim follows.
\end{proof}

Said differently \cref{cor:BPcorollary} shows that the base point inclusion $\circled{1}$ in \eqref{equ:bigdiagram} is $p$-locally $\min(2n,2p-4)$-connected for $n>3$.

\subsection{The homology action and the map \circled{$2$}}
As a consequence of \cref{prop:mcgframed} \ref{prop:extensions-stably-framed-mcg}, the map on fundamental group induced by \circled{$2$} is infinite abelian, so the map \circled{$2$} is as far from being highly connected as possible, even $p$-locally for any $p$. Nevertheless, it turns out that it does induce an isomorphism on $p$-local \emph{homology} groups in a range, which we shall prove by separately studying the effect on homology of the two maps in the factorisation 
\begin{equation}\label{equ:factorisation}\BlockBDiff^{\sfr}_{D^{2n}}(V_g;\ell_0)_{\ell}\lra \BG^{\ext}_{g,\ell}\lra \BGL(H_{V_g}),\end{equation} of the second map in \eqref{eq:homology-action-proof-main-thm}. Here $G_{g,\ell}^{\ext}\subset \GL(H_{W_{g,1}})$ is the subgroup considered in \cref{sect:stableframedmcg}.

\begin{lem}\label{lem:blockdiffvsautomorphismgroup}
For $n\ge3$ and any prime $p$, the induced map \[\oH_*\big(\BlockBDiff^{\sfr}_{D^{2n}}(V_g;\ell_0)_\ell;\bfZ_{(p)}\big)\lra \oH_*\big(\BG^{\ext}_{g,\ell};\bfZ_{(p)}\big)\] is an isomorphism for $*<\min(2n-1,2p-3-n)$ and a surjection in that degree.
\end{lem}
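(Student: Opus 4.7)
The plan is to factor the map in question as the composition
\[\BlockBDiff^{\sfr}_{D^{2n}}(V_g;\ell_0)_\ell \longrightarrow \BhAut^{\cong}_{D^{2n}}(V_g,W_{g,1})_\ell \longrightarrow BH_\ell \longrightarrow \BG^{\ext}_{g,\ell},\]
where $H_\ell\coloneq\pi_0\hAut^{\cong}_{D^{2n}}(V_g,W_{g,1})_\ell$, and to control each arrow separately on $\oH_*(-;\bfZ_{(p)})$. The outer arrows are handled by results already at hand. For the first, I would observe that the triad $(V_g;D^{2n},W_{g,1})$ satisfies the $\pi$-$\pi$-condition (both $V_g$ and $W_{g,1}$ are simply connected), and that $V_g/D^{2n}\simeq\bigvee^g S^n$ has cohomology only up to degree $n$, so the relevant dimension in Theorem~\ref{thm:rationalmodel} is $k=n$; then Corollary~\ref{cor:stableframed}, combined with the equivalence $\BlockBhAut\simeq\BhAut$ from Section~\ref{sect:hAutPrelim}, shows that the first arrow is a $p$-local homology isomorphism for $*<2p-3-n$ and a surjection at $*=2p-3-n$. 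For the third arrow, Lemma~\ref{lem:finitekernel} says the kernel of $\pi_0\hAut^{\cong}_{D^{2n}}(V_g,W_{g,1})\to G^{\ext}_g$ is finite and $p$-torsion-free when $n<2p-4$, and restricting to the $\ell$-stabiliser preserves these properties, so $BH_\ell\to\BG^{\ext}_{g,\ell}$ is a $p$-local homology equivalence in the range of interest.

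The middle arrow is the key step; it is a fibration with homotopy fibre $\BhAut^{\id}_{D^{2n}}(V_g,W_{g,1})$, and I would analyse its Serre spectral sequence with $\bfZ_{(p)}$-coefficients for odd $p$ (at $p=2$ the range $*<2p-3-n$ is vacuous for $n\ge 3$, so nothing is required there). By Corollary~\ref{cor:homologyhAut}, the reduced $p$-local fibre homology vanishes in positive degrees less than $\min(2n-1,2p-2-n)$ except at degree $n$, where it is isomorphic to the $\pi_0\hAut$-module $A\coloneq\ker(S^3H^{(p)}_{W_{g,1}}\to S^3H^{(p)}_{V_g})$. Hence only the rows $q=0$ and $q=n$ contribute in the relevant range, and it suffices to show that the $q=n$ row vanishes $p$-locally.

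The hard part will be establishing this vanishing. I expect it to follow from Lemma~\ref{lem:negativeidentity}: the central element $-\id\in G^{\ext}_{g,\ell}$ acts on $H^{(p)}_{W_{g,1}}$ by $-1$ and hence on the cubes generating $A$ by $(-1)^3=-1$. Since a central element acts trivially on the cohomology of the group it belongs to while acting on $A$ by $-1$, this forces $\oH_*(G^{\ext}_{g,\ell};A)$ to be annihilated by $2$ and therefore to vanish after tensoring with $\bfZ_{(p)}$ for odd $p$. Because the $H_\ell$-action on $A$ factors through $G^{\ext}_{g,\ell}$, the kernel $F$ of $H_\ell\to G^{\ext}_{g,\ell}$ acts trivially on $A$; the Lyndon--Hochschild--Serre spectral sequence of $1\to F\to H_\ell\to G^{\ext}_{g,\ell}\to 1$, together with the finiteness and $p$-torsion-freeness of $F$, will then promote this to $\oH_*(H_\ell;A)\otimes\bfZ_{(p)}=0$. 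With this in hand the Serre spectral sequence of the middle fibration degenerates in the range, the middle arrow becomes a $p$-local homology isomorphism for $*<\min(2n-1,2p-2-n)$, and assembling the three steps yields the claimed isomorphism for $*<\min(2n-1,2p-3-n)$ together with the surjection in that degree.
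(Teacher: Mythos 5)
Your proposal is correct and follows essentially the same route as the paper: the same three-fold factorisation through $\BhAut^{\cong}_{D^{2n}}(V_g,W_{g,1})_\ell$ and $\mathrm{B}\pi_0\hAut^{\cong}_{D^{2n}}(V_g,W_{g,1})_\ell$, Corollary~\ref{cor:stableframed} for the first arrow, Corollary~\ref{cor:homologyhAut} to identify the fibre homology of the middle fibration, the Lyndon--Hochschild--Serre spectral sequence together with Lemma~\ref{lem:finitekernel} to pass between $H_\ell$ and $G^{\ext}_{g,\ell}$, and the centre-kills argument via Lemma~\ref{lem:negativeidentity}. The only cosmetic difference is the order in which you apply the centre-kills trick and the LHS reduction (you do centre-kills on $G^{\ext}_{g,\ell}$ first and then lift the vanishing to $H_\ell$, whereas the paper first pushes the coefficients down to $G^{\ext}_{g,\ell}$-homology via coinvariants and then kills it), which has no mathematical effect.
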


\begin{proof}The claim is vacuous for $p=2$, so we assume otherwise. Consider the factorisation of the map in question
\begin{equation}\label{equ:factorisation-haut-to-gg}
\BlockBDiff^{\sfr}_{D^{2n}}(V_g;\ell_0)_\ell\ra\BlockBhAut^{\cong}_{D^{2n}}(V_g,W_{g,1})_\ell\ra\mathrm{B}\pi_0\BlockhAut^{\cong}_{D^{2n}}(V_g,W_{g,1})_\ell\ra\BG^{\ext}_{g,\ell},
\end{equation}
where the first map is that of \cref{cor:stableframed} applied to the triad $(V_g;D^{2n},W_{g,1})$, the second map is induced by taking path components, and the third is given by acting on the homology of $W_{g,1}\subset \partial V_g$ (see \cref{sect:stableframedmcg}). It follows from the corollary just mentioned that the first map induces an isomorphism in homology with $\bfZ_{(p)}$-coefficients in degrees $*<2p-3-n$ and a surjection in that degree since $V_g$ is obtained from $D^{2n}$ by attaching $n$-handles and the triad $(V_g;D^{2n},W_{g,1})$ satisfies the $\pi$-$\pi$-condition as $W_{g,1}$ and $V_g$ are simply connected for $n\ge2$. To study the remaining maps, we may replace the space of block homotopy automorphisms by its equivalent non-block analogue $\BhAut^{\cong}_{D^{2n}}(V_g,W_{g,1})_\ell$ (see \cref{sect:hAutPrelim}), so the $E_2$-page of the $p$-local Serre spectral sequence of the second map has the form
\begin{equation}\label{equ:e2pageblockdiffss}E_{k,l}^2\cong \oH_k\Big(\pi_0\hAut^{\cong}_{D^{2n}}(V_g,W_{g,1})_\ell;\oH_l \big(\BhAut^{\id}_{D^{2n}}(V_g,W_{g,1});\bfZ_{(p)} \big) \Big).\end{equation} To compute the entries $E_{k,l}^2$, we employ the Serre spectral sequence of the extension
\begin{equation}\label{equ:kernel-p-torsion-free}0\lra L_g\lra \pi_0\hAut^{\cong}_{D^{2n}}(V_g,W_{g,1})_\ell\lra G^{\ext}_{g,\ell}\lra 0,\end{equation}with coefficients in $\oH_l(\BhAut^{\id}_{D^{2n}}(V_g,W_{g,1});\bfZ_{(p)})$, where $L_g$ is the kernel as indicated. The $E_2$-page of this spectral sequence has the form
\[E_{s,t}^2\cong \oH_s\Big(G^{\ext}_{g,\ell};\oH_t\big(L_g;\oH_l(\BhAut^{\id}_{D^{2n}}(V_g,W_{g,1});\bfZ_{(p)})\big)\Big).\]
The kernel $L_g$ of the extension \eqref{equ:kernel-p-torsion-free} is a subgroup of the kernel of the analogous extension without the $\ell$-subscripts, and as the latter is finite and $p$-torsion free for $n<2p-4$ by \cref{lem:finitekernel}, so is the former. The group $L_g$ thus has no nontrivial homology in positive degrees with coefficients in a $\bfZ_{(p)}[L_g]$-module if $n<2p-4$, so in this case the spectral sequence $E_{s,t}^2$ is concentrated in the bottom row and we conclude  that 
\begin{equation}\label{equ:newly-labeled-ss}E_{k,l}^2\cong \oH_k\Big(G^{\ext}_{g,\ell};\oH_l\big(\BhAut^{\id}_{D^{2n}}(V_g,W_{g,1});\bfZ_{(p)}\big)_{L_g}\Big)\quad\text{for }n<2p-4,\end{equation}
where $(-)_{L_g}$ stands for taking coinvariants. By \cref{lem:negativeidentity}, the group $G^{\ext}_{g,\ell}$ contains the negative of the identity, which is central and acts on the coefficients in \eqref{equ:newly-labeled-ss} by $-1$ as long as $0<l<m\coloneq \min(2n-1,2p-3-n)$ as a result of \cref{cor:homologyhAut}, since it acts by $-1$ on $\oH_n(W_{g,1};\bfZ)^{\otimes 3}$ as $(-1)^3=(-1)$. This allows us to apply the ``centre kills''-trick\footnote{Given an element $g\in G$ in a group and a $\bfZ[G]$-module $M$, the multiplication map $g(-)\colon M\ra M$ is equivariant with respect to the conjugation map $c_g\colon G\ra G$. By the same argument as for the trivial module, the pair $(c_g,g(-))$ induces the identity on $\oH_*(G;M)$, so if $g$ is central and thus $c_g=\id$, then $(\id,(1-g)(-))$ annihilates $\oH_*(G;M)$. In particular, if $g$ acts on $M$ by multiplication by $k\in \bfZ$, then  $\oH_*(G;M)$ is $(1-k)$-torsion.} to conclude that the $E_2$-page $E_{k,l}^2$ is $2$-torsion for $0<l<m$ (which implies $n<2p-4$) and is therefore trivial as $p$ was assumed to be odd. In this range, the spectral sequence \eqref{equ:e2pageblockdiffss}  is therefore concentrated at the bottom row, so the second map in \eqref{equ:factorisation-haut-to-gg} induces an isomorphism on $\bfZ_{(p)}$-homology in degrees $*<m$ and a surjection in that degree. This leaves us with arguing that the final map in \eqref{equ:factorisation-haut-to-gg} also has this property. But we already observed that the kernel in \eqref{equ:kernel-p-torsion-free} is $p$-locally acyclic for $n<2p-4$, so in this case the third map is in fact a $p$-local homology isomorphism. As the statement is vacuous unless $1<m$ and thus $n<2p-4$, this finishes the proof.
 \end{proof}

In contrast to the first map in \eqref{equ:factorisation}, the second map might not induce a $p$-local homology isomorphism in a range of degrees, but we will see below that its homotopy colimit $\BG^{\ext}_{\infty,\ell}\ra \BGL(H_{V_\infty})$ with respect to the stabilisation maps explained in \cref{sect:stabilisation} does.

\begin{lem}\label{lem:compareautomorphismgroups}For odd primes $p$, the induced map
\[\oH_*\big(\BG^{\ext}_{\infty,\ell};\bfZ_{(p)}\big)\lra\oH_*\big(\BGL(H_{V_\infty});\bfZ_{(p)}\big)\] is an isomorphism.
\end{lem}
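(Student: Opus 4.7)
\textbf{The plan} is to run the Serre spectral sequence of the fibration coming from the split extension of \cref{prop:mcgframed}(ii), and to reduce the claim to a vanishing statement for the stable group homology of $\GL_\infty(\bfZ)$ with polynomial coefficients. Passing to the direct limit over $g$ of the split short exact sequences
\[0 \lra M_g^{\sfr} \lra G_{g,\ell}^{\ext} \lra \GL(H_{V_g}) \lra 0\]
(stabilisation is compatible with these sequences by \cref{sect:stabilisation}) yields a split extension with kernel $M_\infty^{\sfr}$—a countably generated free abelian group, realised as a $\GL(H_{V_\infty})$-submodule of $(H_{V_\infty}^\vee)^{\otimes 2}$ consisting of the antisymmetric or the even symmetric bilinear forms, depending on the parity of $n$. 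Taking classifying spaces and the associated Serre spectral sequence with $\bfZ_{(p)}$-coefficients gives
\[E^2_{s,t} = \oH_s\big(\GL_\infty(\bfZ);\,\Lambda^t M_\infty^{\sfr} \otimes \bfZ_{(p)}\big) \;\Longrightarrow\; \oH_{s+t}\big(\BG_{\infty,\ell}^{\ext};\bfZ_{(p)}\big),\]
using that $M_\infty^{\sfr}$ is free abelian to identify $\oH_t(BM_\infty^{\sfr})$ with $\Lambda^t M_\infty^{\sfr}$ as a $\GL(H_{V_\infty})$-module. The map in the statement of the lemma is the edge homomorphism along the row $t = 0$, and since the extension is split there is no extension problem to worry about. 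The lemma therefore reduces to showing that $E^2_{s,t} = 0$ for all $t \geq 1$ and all $s \geq 0$.

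For $t \geq 1$ the coefficient module $\Lambda^t M_\infty^{\sfr}$ is a polynomial functor of the standard $\GL_\infty(\bfZ)$-representation $H_{V_\infty} = \bfZ^\infty$ of positive degree $2t$; after inverting $2$ it is a direct summand of the tensor power $(H_{V_\infty}^\vee)^{\otimes 2t}$. To obtain the required vanishing I would appeal to the stable homology vanishing for $\GL_\infty(\bfZ)$ with coefficients in a polynomial functor of positive degree after inverting $2$, which underlies much of the classical computation of $K_*(\bfZ)_{(p)}$ for $p$ odd (as in the work of Borel, Dwyer, Friedlander, Scorichenko and Suslin). Concretely, one hopes to reduce by a Dold-Kan / polynomial functor argument to the case of a tensor power, and there to play off the $\GL_\infty(\bfZ)$-action against the natural $\Sigma_{2t}$-action coming from permuting tensor factors, killing the relevant homology away from the prime $2$.

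\textbf{The main obstacle} is precisely this last step: showing that $\oH_s(\GL_\infty(\bfZ); \Lambda^t M_\infty^{\sfr} \otimes \bfZ_{(p)}) = 0$ for $t \geq 1$ and $p$ odd. A naive "centre-kills" argument of the kind used successfully in the proof of \cref{lem:blockdiffvsautomorphismgroup} is \emph{not} available here, because the only non-trivial scalar in $\GL(H_{V_\infty})$, namely $-\id$, acts on $M_\infty^{\sfr} \subset (H_{V_\infty}^\vee)^{\otimes 2}$ and on each $\Lambda^t M_\infty^{\sfr}$ by $(-1)^{2t} = +1$, so it does not kill any homology. One must therefore bring in an external input—either the Scorichenko-style computation of stable $K$-theory with polynomial bimodule coefficients, or a transfer argument from $\GL_\infty(\bfZ[1/2])$ where additional units make a genuine centre-kills manoeuvre available—to conclude the vanishing in the required generality.
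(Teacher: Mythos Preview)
Your setup is essentially identical to the paper's: both run the Serre spectral sequence of the split extension from \cref{prop:mcgframed} and reduce to the vanishing of the stable $\GL_g(\bfZ)$-homology with coefficients in $\Lambda^t M_g^{\sfr}\otimes\bfZ_{(p)}$ for $t\ge1$. You are also right that the centre-kills trick is unavailable here.

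The obstacle you flag, however, is resolved in the paper by a single concrete reference rather than the heavier machinery you contemplate. After noting (as you do) that $M_g^{\sfr}\otimes\bfZ_{(p)}$ is a direct summand of $(H_{V_g}\otimes H_{V_g})^\vee\otimes\bfZ_{(p)}$, the paper pulls back along the transpose-inverse automorphism of $\GL_g(\bfZ)$ to replace the dual module by $\bfZ^g\otimes\bfZ^g$ itself, and then invokes Betley \cite[Thm~3.1]{Betley}, which gives precisely the vanishing of the stable $\GL_g(\bfZ)$-homology with coefficients in $\Lambda^k(\bfZ^g\otimes\bfZ^g)$ for $k>0$. This is in the same circle of ideas as the Scorichenko-type input you mention, but is a direct citation rather than an argument you would need to reconstruct; the transfer route via $\GL_\infty(\bfZ[1/2])$ is unnecessary.
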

\begin{proof}
By an application of the Serre spectral sequence of the extension
\[0\lra M_g^{\sfr}\lra G_{g,\ell}^{\ext}\lra \GL(H_{V_g})\lra 0\] established in \cref{prop:mcgframed} \ref{prop:extensions-stably-framed-mcg}, it suffices to show that the colimit induced by stabilisation
\[\colim_g\oH_*\big(\GL(H_{V_g});\widetilde{\oH}_*(M_g^{\sfr};\bfZ_{(p)})\big)\cong \colim_g\oH_*\big(\GL(H_{V_g});\widetilde{\oH}_*(M_g^{\sfr}\otimes \bfZ_{(p)};\bfZ_{(p)})\big)\] vanishes; the isomorphism can be viewed as being induced by the $p$-localisation of $K(M_g^{\sfr},1)$, see \cref{section:nilpotency}.  Recall from \cref{sect:stableframedmcg} that the $\GL(H_{V_g})$-module $M^{\sfr}_g\subset (H_{V_g}\otimes H_{V_g})^\vee$ consists of all even $(-1)^{n+1}$-symmetric bilinear forms $\mu\in(H_{V_g}\otimes H_{V_g})^\vee$, so as $p$ is assumed to be odd, its $p$-localisation $M_g^{\sfr}\otimes \bfZ_{(p)}$ is isomorphic to the dual of the symmetric square $\Sym^2(H_{V_g})^\vee\otimes\bfZ_{(p)}$ if $n$ is odd and the dual of the exterior square $\Lambda^2(H_{V_g})^\vee\otimes\bfZ_{(p)}$ if $n$ is even. In particular, by antisymmetrisation, the $\GL(H_{V_g})$-module $M_g^{\sfr}\otimes\bfZ_{(p)}$ is a direct summand of $(H_{V_g}\otimes H_{V_g})^{\vee}\otimes\bfZ_{(p)}$. Choosing a basis $H_{V_g}\cong\bfZ^g$ compatible with the stabilisation maps, this shows that it suffices to show that the stable $\GL_g(\bfZ)$-homology with coefficients in $\oH_k((\bfZ^{g}\otimes \bfZ^{g})^{\vee};\bfZ)\cong \Lambda^{k}(\bfZ^{g}\otimes \bfZ^{g})^{\vee}$ vanishes for $k>0$. Pulling back this module along the automorphism of $\GL_g(\bfZ)$ given by taking transpose inverse, we see that we may replace this module by its dual $\Lambda^{k}(\bfZ^{g}\otimes \bfZ^{g})$ whose $\GL_g(\bfZ)$-homology for large $g$ with respect to $k$ does indeed vanish by an application of a result due to Betley \cite[Thm\,3.1]{Betley}.
\end{proof}

\begin{cor}\label{cor:homologyequivalence}Let $n\ge3$ and $p$ a prime. The map on homology induced by the map \circled{$2$},
\[\oH_*(\BlockBDiff^{\sfr}_{D^{2n}}(V_\infty;\ell_0)_\ell;\bfZ_{(p)})\lra \oH_*(\BGL_\infty(\bfZ)^+;\bfZ_{(p)}),\]
is an isomorphism for $*<\min(2n-1,2p-3-n)$ and a surjection in that degree.
\end{cor}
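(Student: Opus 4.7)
The plan is to combine the two preceding lemmas by passing to the limit in the factorisation \eqref{equ:factorisation}
\[\BlockBDiff^{\sfr}_{D^{2n}}(V_g;\ell_0)_{\ell}\lra \BG^{\ext}_{g,\ell}\lra \BGL(H_{V_g})\cong \BGL_g(\bfZ)\] of the homology action map, where the identification in the target is induced by a basis of $H_{V_g}$ compatible with the stabilisation maps. By \cref{sect:stabilisation}, these maps fit into a commutative ladder with respect to the stabilisation in $g$, so that taking homotopy colimits yields the factorisation
\[\BlockBDiff^{\sfr}_{D^{2n}}(V_\infty;\ell_0)_{\ell}\lra \BG^{\ext}_{\infty,\ell}\lra \BGL_\infty(\bfZ)\]
whose composition agrees, after composition with the plus construction map $\BGL_\infty(\bfZ)\ra\BGL_\infty(\bfZ)^+$, with \circled{$3$}.

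Since homology commutes with sequential homotopy colimits, \cref{lem:blockdiffvsautomorphismgroup} applied levelwise implies that the first map induces an isomorphism on $\bfZ_{(p)}$-homology in degrees $*<\min(2n-1,2p-3-n)$ and a surjection in that degree. For odd primes $p$, \cref{lem:compareautomorphismgroups} ensures that the second map is a $\bfZ_{(p)}$-homology isomorphism in all degrees; for $p=2$ the range condition $*<2p-3-n=1-n$ is vacuous under the assumption $n\ge3$, so the corresponding claim holds trivially. The result follows by composing these two statements and using that the plus construction does not affect homology, so that
\[\oH_*(\BGL_\infty(\bfZ);\bfZ_{(p)})\xlra{\cong}\oH_*(\BGL_\infty(\bfZ)^+;\bfZ_{(p)}).\]

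The main obstacle in the preceding two lemmas was already handled there: controlling the action of $\pi_0\hAut^\cong$ on the $p$-local homology of $\BhAut^{\id}_{D^{2n}}(V_g,W_{g,1})$ (via the ``centre kills'' trick using $-\id\in G_{g,\ell}^{\ext}$ from \cref{lem:negativeidentity}) and identifying the stable $\GL_\infty(\bfZ)$-homology of $M_g^{\sfr}\otimes\bfZ_{(p)}$ with that of an exterior power via Betley's stability result. At the present stage, the proof is a formal assembly of these two inputs.
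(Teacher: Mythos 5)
Your proof is correct and follows essentially the same route as the paper's: pass to the colimit of the factorisation \eqref{equ:factorisation}, apply \cref{lem:blockdiffvsautomorphismgroup} to the first map and \cref{lem:compareautomorphismgroups} to the second (disposing of $p=2$ by noting the range is vacuous for $n\ge3$), and conclude via acyclicity of the plus construction map $\BGL_\infty(\bfZ)\ra\BGL_\infty(\bfZ)^+$.
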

\begin{proof}This is free of content if $p$ is even and follows for $p$ odd from a combination of Lemmas~\ref{lem:blockdiffvsautomorphismgroup} and \ref{lem:compareautomorphismgroups}, using that the canonical map $\BGL_\infty(\bfZ)\ra\BGL_\infty(\bfZ)^+$ is acyclic.
\end{proof}

\subsection{Proof of \cref{thm:mainthmtechnical}}
As in the previous proofs, we assume $p>2$;  the claim is vacuous otherwise. Our goal is to demonstrate that the precomposition of the left column in \eqref{equ:bigdiagram} with the $(2n-4)$-connected map \eqref{equ:Morletmap} provides a zig-zag as promised by \cref{thm:mainthmtechnical}. As a result of the discussion in \cref{sect:BPSection}, the space $X$ is the homotopy fibre of a map out of an infinite loop space which is moreover surjective on fundamental groups as a result of \cref{prop:mcgframed}, so it follows from \cref{lem:nilpotentfibre} that $X$ is nilpotent. That the map \circled{$3$} is $p$-locally $\min(2n,2p-4)$-connected is a consequence of \circled{$1$} having this property by \cref{cor:BPcorollary} and the $1$-connected cover $\BGL_\infty(\bfZ)^+\langle 1 \rangle\ra\BGL_\infty(\bfZ)^+$ being a $p$-local equivalence as $\pi_1\BGL_\infty(\bfZ)^+\cong\bfZ/2$ and $p$ is odd. This leaves us with showing that
\[\BC(D^{2n})\lra \BlockDiff_{D^{2n}}(V_\infty)/\BlockDiff_{D^{2n}}(V_\infty)\lra X\] is $p$-locally $\min(2n-4,2p-4-n)$-connected, which we can test on $p$-local homology groups by \cref{lem:simplicialaction} as source and target are nilpotent. The first map in this composition is $(2n-4)$-connected by \cref{lem:Morletlemma}, so we may focus on the second and show that it induces an isomorphism in $p$-local homology in the required range. Since plus-constructions do not affect homology groups and \circled{$2$} is an isomorphism in homology with $\bfZ_{(p)}$-coefficients in degrees less than $\min(2n-1,2p-3-n)$ and an epimorphism in that degree by \cref{cor:homologyequivalence}, the claim follows from an application of Zeeman's comparison theorem (see e.g.\,\cite[Thm.\,3.2]{HiltonRoitberg}) to the map of Serre spectral sequences induced by the first two rows of \eqref{equ:bigdiagram}, provided we ensure that the actions of the fundamental groups of the bases of both fibre sequences on the $\bfZ_{(p)}$-homology of the respective fibres are trivial in this range. For the second row, this follows from the $p$-local high connectivity of the map \circled{$3$} established in the first part of the proof, using that the canonical action of $\pi_1\BGL_\infty(\bfZ)^+$ on the homology of $\Omega \BGL_\infty(\bfZ)^+$ is trivial as $\BGL_\infty(\bfZ)^+\simeq \Omega_0 \mathrm{B}(\coprod_g\BGL_g(\bfZ))$ is a loop space (in fact, an infinite one). For the  first row, the triviality of the action is a consequence of \cref{lem:Morletlemma} together with the observation that any element in $\pi_0\Diff_{D^{2n}}(V_g)$ can be represented by a diffeomorphism that fixes $D^{2n+1}=V_0\subset V_g$ pointwise, so commutes with any diffeomorphism in the image of the iterated stabilisation map $\oC(D^{2n})\simeq \Diff_{D^{2n}}(D^{2n+1})\ra \Diff_{D^{2n}}(V_g)$. This finishes the proof of \cref{thm:mainthmtechnical}, which in particular implies \cref{thm:mainthm}.

\subsection{A reformulation}\label{sec:reformulation}
Although not necessary for the proof of \cref{thm:mainthmtechnical} or \cref{thm:mainthm},  we shall explain in \cref{sec:DWW} below how an instance of the Dwyer--Weiss--Williams index theorem \cite{DWW} shows that the plus constructed stable homology action
\begin{equation}\label{equ:homology-action}\BDiff^{\sfr}_{D^{2n}}(V_\infty)_\ell^+\lra \BGL_{\infty}(\bfZ)^+\end{equation} featuring in \eqref{equ:bigdiagram} agrees with respect to the equivalence $\BDiff^{\sfr}_{D^{2n}}(V_\infty)_\ell^+\simeq \Omega^\infty_0\Sigma_+^\infty\oO/\oO(2n+1)$ explained in \cref{sect:BPSection} with the map obtained by applying $\Omega^\infty_0(-)$ to the composition
\begin{equation}\label{equ:composition-unit-mult}
\Sigma_+^\infty\oO/\oO(2n+1)\xlra{\pr}\bfS{\xlra{\iota}}K(\bfZ)\xlra{(-1)^n}K(\bfZ),
\end{equation}
where $\pr$ is the projection and $\iota$ the unit. This identifies the homotopy fibre $X$ in \eqref{equ:bigdiagram} as \[X\simeq \Omega^\infty_+\hofib\big(\Sigma_+^\infty\oO/\oO(2n+1)\xra{\pr}\bfS\xra{\iota} K(\bfZ)\big),\] so we may postcompose the map $\phi$ from \cref{thm:mainthmtechnical} with the $(2n+1)$-connected map $\hofib(\iota\circ\pr)\ra\hofib(\iota)$ induced by the projection to arrive at a cleaner formulation of \cref{thm:mainthm}: there is a $p$-locally $\min(2n-4,2p-4-n)$-connected map 
\begin{equation}\label{equation:better-map}
\BC(D^{2n})\lra \Omega_0^\infty\hofib(\bfS\xra{\iota} K(\bfZ)).\end{equation}

\subsubsection{Relation to Waldhausen's work}It is reasonable to expect that the map \eqref{equation:better-map} agrees up to equivalences with the composition
\begin{equation}\label{equ:waldhausen-composition}
\BC(D^{2n})\lra \Omega_0^\infty\Wh^{\Diff}(*)=\Omega_0^\infty\hofib\big(\bfS\ra K(\bfS)\big)\lra \Omega_0^\infty\hofib\big(\bfS\ra K(\bfZ)\big)
\end{equation}
of the map known from Waldhausen's work \cite{WaldhausenManifold} with the map induced by linearisation. While it is not hard to show that the map \eqref{equation:better-map} does indeed factor over the second map in \eqref{equ:waldhausen-composition}, a convincing comparison between \eqref{equation:better-map} and \eqref{equ:waldhausen-composition} appears to be more laborious and we will not go into this matter at this point (however, see the final remark of the introduction).

\subsubsection{An application of the index theorem}\label{sec:DWW}
To justify the claimed identification of \eqref{equ:homology-action} with the map resulting from applying $\Omega^\infty_0(-)$ of \eqref{equ:composition-unit-mult}, one can argue as follows: firstly, it suffices to show that these maps agree when precomposed with an arbitrary map $B\ra \BDiff^{\sfr}_{D^{2n}}(V_\infty)_\ell^+$ that classifies a smooth $V_g$-bundle $\pi\colon E\ra B$ for some $g\ge0$ together with a trivial $D^{2n}$-subbundle and stably framed vertical tangent bundle $T_\pi E$. Tracing through the equivalences featuring in \cref{sect:BPSection}, one finds that the composition
\begin{equation}\label{equ:bundle-composition}
B\lra \BDiff^{\sfr}_{D^{2n}}(V_\infty)_\ell^+\simeq \Omega^\infty_0\Sigma^\infty_+(\oO/\oO(2n+1))
\end{equation}
 represents the class \[[T_\pi E]\circ  \BG(\pi)-\chi(V_g)\in [\Sigma_+^\infty B, \Sigma^\infty_+(\oO/\oO(2n+1)]\] where  $\chi(-)$ is the Euler characteristic, $\BG(\pi)\in[\Sigma_+^\infty B, \Sigma^\infty_+E]$ is the Becker--Gottlieb transfer of the bundle $\pi$ described in terms of the Pontryagin--Thom construction \cite{BeckerGottlieb1,BeckerGottlieb2}, and the class $[T_\pi E]\in [\Sigma_+^\infty E, \Sigma^\infty_+\oO/\oO(2n+1)]$ is induced by the vertical tangent bundle of $\pi$ together with its stable framing. It thus suffices to show that the class
\begin{equation}\label{equ:BG-transfer}
(-1)^n(\iota \circ \pr)^* (\BG(\pi)-\chi(V_g))\in [\Sigma_+^\infty B, K(\bfZ)]\end{equation}
agrees with the class represented by the composition
 \[B\lra \BDiff^{\sfr}_{D^{2n}}(V_\infty)_\ell^+\lra \BGL_{\infty}(\bfZ)^+\simeq \Omega^{\infty}_0K(\bfZ).\] The latter can be identified with the class
 \begin{equation}\label{equ:homology-transfer}[H_n(\pi)]-g\in [\Sigma^\infty_+B,K(\bfZ)]\end{equation} where $H_n(\pi)$ is the local system over $B$ of fibrewise middle-dimensional homology groups of $\pi$. Dwyer--Weiss--Williams' improved Riemann--Roch theorem \cite[p.\ 2]{DWW} gives
\[(\iota\circ \pr)^* \BG(\pi)=1+(-1)^n\cdot [H_n(\pi)]\in [\Sigma^\infty_+B,K(\bfZ)],\]
so using  $\chi(V_g)=1+(-1)^ng$ it follows that \eqref{equ:BG-transfer} and \eqref{equ:homology-transfer} indeed agree.

\appendix

\section{Stable tangential bundle maps are stable bundle maps}
\label{sect:appendix}
Fix a $d$-dimensional vector bundle $\xi$ over a space $X$, a stable vector bundle $\{\psi_k\ra B_k\}_{k\ge 0}$, subcomplexes $A,C\subset X$, and a bundle map $\ell_0\colon \xi|_A\oplus\varepsilon^k\ra \psi_{d+k}$ covering a map $\bar{\ell}_0\colon X\ra B_{d+k}$ for some $k\ge0$. In addition to the notation for various types of bundle maps in \cref{section:prelim}, we abbreviate 
\[\BlockMaps_A(X,B;\bar{\ell}_0)_\bullet\coloneq\colim_{m\ge k}\BlockMaps_A(X,B_m;\bar{\ell}_0)_\bullet,\] where $\BlockMaps_A(X,B_m;\bar{\ell}_0)_\bullet$ is the semi-simplicial set whose $p$-simplices are block maps $\Delta^p\times X\ra \Delta^p\times B_{m}$ which agree with $\id_{\Delta^p}\times\bar{\ell}_0$ on $\Delta^p\times A$. The colimit is taken over the maps induced by post-composition with the maps $B_m\ra B_{m+1}$ underlying the structure maps of $\psi$. The sub semi-simplicial set of maps $\Delta^p\times X\ra \Delta^p\times B_m$ over $\Delta^p$ is denoted by $\Maps_A(X,B;\bar{\ell}_0)_\bullet\subset \BlockMaps_A(X,B;\bar{\ell}_0)_\bullet$.

\begin{lem}\label{lem:kanfibration}If the base $X$ of $\xi$ is a finite CW-complex, then the semi-simplicial maps 
\[\BlockBun_A(\xi^s,\psi;\ell_0)^\tau_\bullet\lra \BlockMaps_A(X,B;\bar{\ell}_0)_\bullet\quad\text{and}\quad\BlockhAut_A(\xi^s;C)^\tau_\bullet\lra \BlockhAut_A(X;C)_\bullet\] induced by forgetting bundle maps are Kan fibrations.
\end{lem}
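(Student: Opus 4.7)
Both claims are structurally similar, and the second follows formally from the first: being a homotopy equivalence is a condition on the underlying block map, so $\BlockhAut_A(\xi^s;C)^\tau_\bullet$ is the semi-simplicial preimage of $\BlockhAut_A(X;C)_\bullet$ inside $\BlockBun_A(\xi^s,\xi^s;\inc)^\tau_\bullet$, and the restriction of a Kan fibration to such a preimage remains a Kan fibration. So the plan focuses on the first forgetful map.

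Unwinding the Kan condition, one has to solve the following lifting problem. Given an $n$-simplex $f\colon \Delta^n\times X\to \Delta^n\times B_{m}$ in the base together with compatible tangential bundle maps $\varphi_i$ covering $f|_{\Delta^n_i\times X}$ for all $i\neq k$ and agreeing with $\ell_0$ on $\Delta^n_i\times A$, I must produce a tangential bundle map $\varphi$ over $f$ with $\varphi|_{\Delta^n_i\times X}=\varphi_i$ for $i\neq k$. A preliminary reduction uses the structure maps of $\psi$ and the stabilization by $\varepsilon$ to arrange that $f$ and all the $\varphi_i$ live at a common stable level, with a single source of the form $\xi\oplus\varepsilon^{m-d}$.

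The construction of $\varphi$ will then proceed in three stages. First, use the collaring condition to extend each $\varphi_i$ to a tangential bundle map on the collar $\Delta^n_{i,\epsilon}\times X$ in the product form $\varphi_i\times \id_{[0,\epsilon)}$ dictated by the diffeomorphism $c_{i,\epsilon}$ and the canonical trivialization of $\tau_{[0,\epsilon)}$; compatibility of the $\varphi_i$ on codimension-one intersections ensures that these pieces glue into a single tangential bundle map defined on $N\times X$, where $N=\bigcup_{i\neq k}\Delta^n_{i,\epsilon}$ is a collared neighborhood of the horn $\Lambda^n_k$. Second, extend to a bundle map over all of $\Delta^n\times X$, temporarily ignoring the collar condition at the missing face $\Delta^n_k$: since $\Lambda^n_k\hookrightarrow \Delta^n$ is a strong deformation retract, so is the inclusion $N\times X\cup \Delta^n\times A\hookrightarrow \Delta^n\times X$, and the classical extension theorem for vector bundle isomorphisms along acyclic cofibrations---a consequence of the covering homotopy property of the principal $GL$-bundle of fibrewise isomorphisms---furnishes the extension. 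Third, re-collar near $\Delta^n_k$: restrict the extension to $\Delta^n_k\times X$ to obtain a bundle map $\varphi_k$, and redefine the extension on a thinner collar $\Delta^n_{k,\epsilon'}\times X$ with $\epsilon'<\epsilon$ to equal $\varphi_k\times \id_{[0,\epsilon')}$, interpolating on a thin shell back to the previously constructed value.

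The main obstacle I anticipate is the bookkeeping in the third step: the re-collaring near $\Delta^n_k$ must not disturb the collar conditions already imposed at the other faces. This will be handled by taking $\epsilon'$ sufficiently small and exploiting the corner structure of $\Delta^n$ near the codimension-two strata $\Delta^n_i\cap \Delta^n_k$, where the product decompositions induced by $c_{i,\cdot}$ and $c_{k,\cdot}$ can be chosen to commute so that a joint bicollar is available; the interpolation is then supported in a region disjoint from the other face collars, making the simultaneous collar conditions automatic.
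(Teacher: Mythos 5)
Your first two steps match the paper's proof: extend the $\varphi_i$ along their collars using the $c_{i,\epsilon}$ and condition \eqref{equ:collarsquare} to assemble a tangential bundle map over a neighbourhood of $\Lambda^n_k$, then extend to all of $\Delta^n\times X$ by obstruction theory (lifting along a fibration of bundle-isomorphism spaces), temporarily forgetting the collar condition at the $k$th face. The third step is where there is a genuine gap, and the obstacle you anticipate (joint bicollars near codimension-two corners) is not the real difficulty.

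Two problems. First, ``restrict the extension to $\Delta^n_k\times X$ to obtain a bundle map $\varphi_k$'' is not well-defined. The restriction of your extension to the $k$th face is a bundle isomorphism of $\tau_{\Delta^n}|_{\Delta^n_k}\times\xi$, i.e.\ of $\varepsilon\oplus\tau_{\Delta^n_k}\times\xi$ after applying the differential of $c_{k,\epsilon}$. This need not preserve the $\varepsilon$-summand, so there is no tangential bundle map $\varphi_k$ over $\Delta^n_k$ to extract from it by restriction or projection (a generic automorphism of $\varepsilon\oplus\varepsilon$ mixes the factors, and the naive projection is not an isomorphism). Second, even granting a candidate $\varphi_k$, your interpolation on a thin shell between $\varphi_k\times\id_{[0,\epsilon')}$ and the previously constructed extension is not a bookkeeping matter but a nontrivial homotopy-lifting problem relative to a fixed boundary, and the obstruction does not in general vanish. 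The paper handles this by \emph{stabilising}: after adding a trivial bundle $\varepsilon^n$ for $n$ large, the map $\Iso(\nu\oplus\varepsilon^n,\eta\oplus\varepsilon^n)\to\Iso(\varepsilon\oplus\nu\oplus\varepsilon^n,\varepsilon\oplus\eta\oplus\varepsilon^n)$ given by $\id_\varepsilon\oplus(-)$ has fibre comparison map equivalent to the inclusion $\oO(p+d+n)\subset\oO(p+d+n+1)$, whose connectivity grows with $n$, so obstruction theory supplies the required homotopy (relative to $\partial\Delta^n_k\times X$, where the splitting already holds) of the restricted bundle map into the split form. The target of the Kan-fibration only remembers bundle maps stably, so this stabilisation is free; but it is essential, and your proposal, which only stabilises at the preliminary step to equalise levels, never uses it where it is actually needed.
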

\begin{proof}
For $\psi=\xi^s$ and $\ell_0=\inc$ the inclusion, the second map agrees with the pullback of the first map along the inclusion $\BlockhAut_A(X;C)_\bullet\subset  \BlockMaps_A(X,X;\inc)_\bullet$, so it suffices to show that the first map is a Kan fibration. We shall do so in the case where $A$ is empty; the argument for general $A$ is similar. The upper horizontal map in a lifting problem
\[
\begin{tikzcd}
(\Lambda_j^p)_\bullet\dar\rar&\BlockBun(\xi^s,\psi)^\tau_\bullet\dar\\
(\Delta^p)_\bullet\arrow[ur,dashed]\rar&\BlockMaps(X,B)_\bullet,
\end{tikzcd}
\]
induces bundle maps \[\phi_i\colon\tau_{\Delta^p_i}\times \xi\oplus\varepsilon^k\lra \tau_{\Delta^p_i}\times \psi_{d+k}\] for $i\neq j$ that agree on their faces and cover the restriction of the map $\bar{\phi}\colon\Delta^p\times X\ra \Delta^p\times B_{d+k}$ induced by the bottom horizontal arrow to $\Delta^p_i\times X$. By replacing $\xi$ with its stabilisation $\xi\oplus\varepsilon^k$, we may assume $k=0$. There is an extension of $\phi_i$ to $\tau_{\Delta^p}|_{\Delta^p_i}$ given by the composition
\[\tau_{\Delta^p}|_{\Delta^p_i} \times \xi\xra{\cong}\varepsilon\oplus \tau_{\Delta^p_i} \times \xi\xra{\id_\varepsilon\oplus\phi_i } \varepsilon\oplus\tau_{\Delta^p_i}\times \psi_{d}\xra{\cong}\tau_{\Delta^p}|_{\Delta^p_i} \times \psi_{d}\] whose outer isomorphisms are induced by the differential of the diffeomorphism $c_{i,\epsilon}$ of \cref{sect:blockspaces}. The condition \eqref{equ:collarsquare} in the definition of tangential bundle maps is made precisely such that these extensions agree on their intersections, so they assemble to a bundle map 
\[\phi_{\Lambda^p_j}\colon \tau_{\Delta^p}|_{\Lambda^p_j}\times\xi\lra \tau_{\Delta^p}|_{\Lambda^p_j}\times\psi_{d}
\]
that covers the restriction of $\bar{\phi}$ to $\Lambda^p_j\times X$ and we are left to argue that this bundle map is the restriction of a $p$-simplex in $\Bun(\xi^s,\psi)^\tau_\bullet$ covering $\bar{\phi}$. As the inclusion $\Lambda^p_j\times X\subset \Delta^p\times X$ is a trivial cofibration, obstruction theory provides an extension of $\phi_{\Lambda^p_j}$ to a bundle map 
\[\phi_{\Delta^p}\colon \tau_{\Delta^p}\times\xi\lra \tau_{\Delta^p}\times\psi_{d}\] that covers $\bar{\phi}$, but this extension might violate condition \eqref{equ:collarsquare} on the $j$th face, i.e.\,the map \[\phi_j\colon\varepsilon\oplus \tau_{\Delta^p_j}\times\xi\lra  \varepsilon\oplus \tau_{\Delta^p_j}\times\psi_{d}\] obtained by restricting $\phi_{\Delta^p}$ to $\Delta^p_j\times X$ and using the isomorphism $\tau_{\Delta^p}|_{\Delta^p_j}\cong \varepsilon\oplus \tau_{\Delta^p_j}$ induced by the derivative of $c_{j,\epsilon}$ need not have the form $\id_\varepsilon\oplus \phi_j$ for a bundle map \[\phi_j\colon \tau_{\Delta^p_j}\times \xi\lra \tau_{\Delta^p_j}\times \psi_{d}.\] Nevertheless, its restriction to $\partial\Delta^p_j\times X$ does have this form and we will argue that after adding a trivial bundle of sufficiently large dimension, say $n$, we can alter $\phi_j$ by a homotopy of bundle maps relative to $\partial\Delta^p_i\times X$ that covers $\bar{\phi}|_{\Delta^p_i}$ so that it does have the required form. This would show the claim because we can use this homotopy to change $\phi_{\Delta^p}\oplus\id_{\varepsilon^n}$ to a $p$-simplex in $\Bun(\xi \oplus\varepsilon^n,\psi_{d}\oplus\varepsilon^n)^\tau_\bullet$ that covers $\bar{\phi}$ and extends $\phi_{\Lambda^p_j}\oplus\id_{\varepsilon^n}$ and thus provides a lift as required. To this end, we denote by $\Iso(\nu,\eta)\ra Y$ for vector bundles $\nu$ and $\eta$ over a space $Y$ the fibre bundle whose sections correspond to bundle morphisms $\nu\ra\eta$ over the identity, so the fibre over $y\in Y$ is the space of isomorphisms $\nu_y\ra \eta_y$ between the fibres of $\nu$ and $\eta$. Abbreviating $\nu\coloneq \tau_{\Delta_j^p}\times \xi$ and $\eta\coloneq \bar{\phi}^*(\tau_{\Delta_j^p}\times \psi_d)$, we consider the diagram
\[
\begin{tikzcd}[row sep=0.7cm]
\partial\Delta^p_j\times X\arrow[d,hook]\rar&\Iso(\nu\oplus \varepsilon^n, \eta\oplus \varepsilon^n )\arrow[r,"\id_{\varepsilon}\oplus-"]&\Iso(\varepsilon\oplus \nu\oplus \varepsilon^n, \varepsilon\oplus \eta\oplus \varepsilon^n)\arrow[d]\\
\Delta^p_j\times X\arrow[r,equal]\arrow[ur,dashed]\arrow[rru]&\Delta_j^p\times X\arrow[r,equal]\arrow[u,leftarrow,crossing over]&\Delta^p_j\times X
\end{tikzcd}
\]
in which the solid diagonal arrow is induced by $\phi_j\oplus\id_{\varepsilon^n}$ and the upper left horizontal one by its restriction to $\partial \Delta^p_j\times X$, which makes the subdiagram of solid arrows commute strictly. From this point of view, the task we set us is equivalent to constructing a dashed arrow for some $n$ that makes the upper leftmost triangle commutes strictly and the one formed by the two diagonal arrows up to homotopy relative $\partial \Delta^p_j\times X$. Using that $X$ is a finite CW complex and that the connectivity of the map on vertical homotopy fibres of the right square increases in $n$ as it agrees with the inclusion $\oO(p+d+n)\subset \oO(p+d+n+1)$ up to equivalence, this follows from an application of obstruction theory.
\end{proof}

\begin{nrem}We learnt the ``stabilisation trick'' of the previous proof from Appendix D of \cite{BerglundMadsen}, which contains results similar to those of this appendix.
\end{nrem}

\begin{cor}\label{cor:tangentialiskan}If the base $X$ of $\xi$ is a finite CW-complex, then the following semi-simplicial sets satisfy the Kan property.
{\setlength\multicolsep{2pt}\begin{multicols}{2}
\begin{enumerate}
\item $\BlockBun_A(\xi^s,\psi;\ell_0)^\tau_\bullet$ 
\item $\Bun_A(\xi^s,\psi;\ell_0)^\tau_\bullet$ 
\item $\BlockhAut_A(\xi^s;C)_\bullet^\tau$ 
\item $\hAut_A(\xi^s;C)_\bullet^\tau$
\end{enumerate}
\end{multicols}}
\end{cor}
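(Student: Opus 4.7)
The plan is to combine \cref{lem:kanfibration} with the Kan property of suitable base semi-simplicial sets, using that a Kan fibration over a Kan base has Kan total space and that Kan fibrations are stable under pullback.

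First I would establish that the four candidate base semi-simplicial sets are Kan. The non-block variants $\Maps_A(X,B_m;\bar{\ell}_0)_\bullet$ and $\hAut_A(X;C)_\bullet$ agree with the singular complexes of the corresponding mapping spaces equipped with the compact-open topology, hence are Kan; the stabilised colimit $\Maps_A(X,B;\bar{\ell}_0)_\bullet$ remains Kan because every horn is finite and therefore factors through some finite stage. For the block analogues $\BlockMaps_A(X,B;\bar{\ell}_0)_\bullet$ and $\BlockhAut_A(X;C)_\bullet$, I would verify the Kan property by a direct horn-filling argument: since neither imposes a collaring condition, a horn $\Lambda^p_j$ can be extended to $\Delta^p$ by composing with a deformation retraction $\Delta^p\to\Lambda^p_j$ and then interpolating linearly in the $\Delta^p$-coordinate to restore the identity on $\Delta^p\times A$. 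Such a deformation retraction respects the face filtration of $\Delta^p$, so the block structure is preserved, and the homotopy equivalence condition required for $\BlockhAut$ holds since the deformation retraction is itself a homotopy equivalence.

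Second, \cref{lem:kanfibration} supplies Kan fibrations
\[
\BlockBun_A(\xi^s,\psi;\ell_0)^\tau_\bullet\lra \BlockMaps_A(X,B;\bar{\ell}_0)_\bullet
\quad\text{and}\quad
\BlockhAut_A(\xi^s;C)^\tau_\bullet\lra \BlockhAut_A(X;C)_\bullet,
\]
whose bases are Kan by the previous paragraph, so items~(i) and~(iii) follow. For items~(ii) and~(iv), observe that by the definitions in \cref{sect:unstablebundlemaps,sect:stablebundlemaps}, a $p$-simplex of $\Bun_A(\xi^s,\psi;\ell_0)^\tau_\bullet$ (respectively $\hAut_A(\xi^s;C)^\tau_\bullet$) is precisely a $p$-simplex of the block version whose underlying map is over $\Delta^p$. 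Consequently the squares
\[
\begin{tikzcd}
\Bun_A(\xi^s,\psi;\ell_0)^\tau_\bullet\arrow[r]\arrow[d]&\Maps_A(X,B;\bar{\ell}_0)_\bullet\arrow[d]\\
\BlockBun_A(\xi^s,\psi;\ell_0)^\tau_\bullet\arrow[r]&\BlockMaps_A(X,B;\bar{\ell}_0)_\bullet
\end{tikzcd}
\quad\text{and}\quad
\begin{tikzcd}
\hAut_A(\xi^s;C)^\tau_\bullet\arrow[r]\arrow[d]&\hAut_A(X;C)_\bullet\arrow[d]\\
\BlockhAut_A(\xi^s;C)^\tau_\bullet\arrow[r]&\BlockhAut_A(X;C)_\bullet
\end{tikzcd}
\]
are cartesian, so the top horizontal arrows are Kan fibrations with Kan base, giving items~(ii) and~(iv).

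The only step requiring genuine argument is the horn-filling for $\BlockMaps$ and $\BlockhAut$ in the first paragraph; the absence of collaring or smoothness conditions reduces this to elementary obstruction theory applied to a deformation retraction of a simplex onto a horn.
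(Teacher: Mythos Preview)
Your proof is correct and follows essentially the same approach as the paper: both verify that the base semi-simplicial sets $\BlockMaps_A(X,B;\bar{\ell}_0)_\bullet$, $\Maps_A(X,B;\bar{\ell}_0)_\bullet$, $\BlockhAut_A(X;C)_\bullet$, and $\hAut_A(X;C)_\bullet$ are Kan, apply \cref{lem:kanfibration} to obtain items~(i) and~(iii), and then observe that the non-block versions are pullbacks of these Kan fibrations along the inclusions of the non-block into the block mapping spaces to deduce~(ii) and~(iv). You supply somewhat more detail on the horn-filling for the block mapping spaces than the paper, which simply declares it ``straight-forward''.
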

\begin{proof}
It is straight-forward to see that $\BlockMaps_A(X,B;\bar{\ell}_0)_\bullet$ is Kan, so the first part follows from \cref{lem:kanfibration} and the fact that the domain of a Kan fibration over a Kan complex is Kan. The same reasoning applies to the second semi-simplicial set, using that $\Bun_A(\xi^s,\psi;\ell_0)^\tau_\bullet\ra \Maps_A(X,B;\bar{\ell}_0)_\bullet$ is a Kan fibration because it is the pullback of the first map of \cref{lem:kanfibration} along the inclusion $\Maps_A(X,B;\bar{\ell}_0)_\bullet\subset\BlockMaps_A(X,B;\bar{\ell}_0)_\bullet$. Also the semi-simplicial sets $\hAut_A(X;C)_\bullet$ and $\BlockhAut_A(X;C)_\bullet$ are easily seen to be Kan (see \cref{sect:hAutPrelim}), so the remaining claims can be proved in the same way.
\end{proof}

\begin{lem}\label{lem:blockisnonblock}If the base $X$ of $\xi$ is a finite CW-complex, then the inclusions
\[\Bun_A(\xi^s,\psi;\ell_0)_\bullet^\tau\subset \BlockBun_A(\xi^s,\psi;\ell_0)_\bullet^\tau\quad\text{and}\quad \hAut_A(\xi^s;C)_\bullet^\tau\subset \BlockhAut_A(\xi^s;C)_\bullet^\tau\] are equivalences.
\end{lem}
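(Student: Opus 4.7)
My plan is to prove the first inclusion in detail; the second one follows by pulling back to the appropriate sub-monoids of homotopy self-equivalences. Moreover, since both sides of the stable statement are sequential colimits along level-wise inclusions of semi-simplicial sets and realisation commutes with such colimits, it suffices to prove the unstable analogue: that the inclusion $\Bun_A(\xi,\psi_d;\ell_0)^\tau_\bullet\subset \BlockBun_A(\xi,\psi_d;\ell_0)^\tau_\bullet$ realises to an equivalence for each unstable representative $(\xi,\psi_d)$.

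First, I would set up the commutative square
\[
\begin{tikzcd}
\Bun_A(\xi,\psi_d;\ell_0)^\tau_\bullet \ar[r,hook] \ar[d] & \BlockBun_A(\xi,\psi_d;\ell_0)^\tau_\bullet \ar[d] \\
\Maps_A(X,B_d;\bar{\ell}_0)_\bullet \ar[r,hook] & \BlockMaps_A(X,B_d;\bar{\ell}_0)_\bullet
\end{tikzcd}
\]
whose vertical maps forget the bundle data, and observe that it is a pullback by definition (a tangential block bundle map is in $\Bun^\tau_\bullet$ exactly when its underlying map is over $\Delta^p$). By \cref{lem:kanfibration} the right vertical is a Kan fibration, so the left one is as well. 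It therefore suffices to check that the bottom inclusion is an equivalence and that the induced map on fibres over each vertex is an equivalence. The fibre statement is essentially tautological: the fibre of either vertical over a $0$-simplex $\bar{\varphi}\colon X\to B_d$ is just the set of tangential bundle maps $\xi\to \psi_d$ covering $\bar{\varphi}$ (the conditions on face preservation and collars are vacuous in dimension zero), so the fibre map is an isomorphism.

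The main substance is showing the bottom inclusion is an equivalence. I would exploit the set-theoretic decomposition
\[
\BlockMaps_A(X,B_d;\bar{\ell}_0)_p \;\cong\; \mathrm{Block}(X)_p\times \Maps_A(X,B_d;\bar{\ell}_0)_p
\]
where $\mathrm{Block}(X)_p$ is the set of maps $\alpha\colon \Delta^p\times X\to\Delta^p$ satisfying $\alpha(\sigma\times X)\subset\sigma$ for every face $\sigma$. Since the face maps act diagonally, this extends to a semi-simplicial isomorphism under which the inclusion of $\Maps_A$ corresponds to picking $\alpha=\mathrm{pr}$. Thus the inclusion is an equivalence as soon as $|\mathrm{Block}(X)_\bullet|$ is contractible. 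But each $\mathrm{Block}(X)_p$ is convex (convex combinations of face-preserving maps remain face-preserving since faces are convex), and the explicit linear homotopy $H_s(\alpha)(t,x)=(1-s)\alpha(t,x)+s\cdot t$ ($s\in [0,1]$) is compatible with face restriction and contracts each level to the projection; promoting $\mathrm{Block}(X)_\bullet$ to a semi-simplicial space via the $C^0$-topology, this gives a semi-simplicial deformation retraction, whose realisation is then a deformation retraction of $|\BlockMaps_\bullet|$ onto $|\Maps_\bullet|$.

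The main obstacle is the passage from level-wise convexity to contractibility of the realisation. The convex contraction is completely explicit, so the obstruction is more bookkeeping than content: one must enhance $\mathrm{Block}(X)_\bullet$ to a semi-simplicial topological space and verify that the straight-line contraction survives geometric realisation, which is routine but deserves explicit mention. Once this is in place, combining Steps 1--4 with the pullback structure and the fact that the fibres are literally identified concludes the proof for the $\Bun$ version, and the $\hAut$ version is obtained by restricting to the sub-monoids whose underlying self-maps are homotopy automorphisms fixing $C$, a condition preserved by the entire argument.
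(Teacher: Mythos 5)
Your overall skeleton mirrors the paper's argument: observe that each inclusion is the pullback, along the Kan fibration of \cref{lem:kanfibration}, of the corresponding inclusion of (block) mapping spaces, and then reduce to showing that $\Maps_A(X,B;\bar{\ell}_0)_\bullet\subset\BlockMaps_A(X,B;\bar{\ell}_0)_\bullet$ realises to an equivalence, using the convexity of $\Delta^p$. However, there are two gaps in the execution.

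First, the opening reduction to the unstable case is not supported. \cref{lem:kanfibration} is a statement about the \emph{stable} forgetful maps; its proof adds a trivial bundle of large dimension in an essential way to correct the violation of the collaring condition on the missing face. The unstable map $\BlockBun_A(\xi,\psi_d;\ell_0)^\tau_\bullet\to\BlockMaps_A(X,B_d;\bar{\ell}_0)_\bullet$ is therefore not known to be a Kan fibration (and plausibly is not one), so the pullback argument does not get off the ground after your reduction. The reduction also buys nothing: the rest of your argument applies verbatim at the stable level, where the cited lemma is actually applicable. You should simply run the argument stably, as the paper does.

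Second, the step deducing that $\Maps_\bullet\subset\BlockMaps_\bullet$ realises to an equivalence is not correct as written. You decompose $\BlockMaps_\bullet\cong\mathrm{Block}(X)_\bullet\times\Maps_\bullet$ and then argue that the linear contraction $H_s$ makes $\mathrm{Block}(X)_\bullet$ ``contractible'', concluding by ``promoting to a semi-simplicial space via the $C^0$-topology''. Two problems: (a) realisation of semi-simplicial sets does not generally commute with products (one needs both factors to admit degeneracies, cf.\ \cref{sect:semisimplicial}); and, more seriously, (b) the topological realisation of the $C^0$-topologised semi-simplicial space is a different space from the geometric realisation of the underlying discrete semi-simplicial set, which is the one you actually need to control, so a deformation retraction of the former tells you nothing directly about the latter. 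The correct route is the one the paper gestures at: both $\Maps_\bullet$ and $\BlockMaps_\bullet$ are Kan, so the inclusion is an equivalence if and only if the combinatorial relative homotopy groups of the Kan pair vanish, and one kills a relative $p$-cycle $(\alpha,\beta)\in\BlockMaps_p$ by converting your linear homotopy $H_s(\alpha)$ (which is constant on $\partial\Delta^p\times X$, i.e.\ exactly the contraction of $\Maps_{\partial\Delta^p}(\Delta^p,\Delta^p)$) into a bounding $(p+1)$-simplex, using a reparametrisation $\Delta^p\times[0,1]\to\Delta^{p+1}$ as in the proof of \cref{lem:tangentialisequ}. Your geometric input is the right one; the packaging through topologised semi-simplicial spaces is where it goes wrong.
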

\begin{proof}
These inclusions are pullbacks of the inclusions \begin{equation}\label{equ:blockinclusionsapp}\Maps_A(X,B;\bar{\ell}_0)_\bullet\subset\BlockMaps_A(X,B;\bar{\ell}_0)_\bullet\quad\text{ and }\quad\hAut_A(X;C)_\bullet\subset\BlockhAut_A(X;C)_\bullet\end{equation} along the two Kan fibrations discussed in \cref{lem:kanfibration}, so the claim follows from showing that the inclusions \eqref{equ:blockinclusionsapp} are equivalences. As already mentioned in the previous proof, it is straight-forward to show that these semi-simplicial sets are Kan. Using the combinatorial description of their homotopy groups, the claim follows from the contractibility of $\Maps_{\partial \Delta^p}(\Delta^p,\Delta^p)$ (cf.\,\cref{sect:hAutPrelim}).
\end{proof}

\begin{lem}\label{lem:tangentialisequ}If the base $X$ of $\xi$ is a finite CW-complex, then the extension maps of \eqref{equ:euivtangential}
\[\Bun_A(\xi^s,\psi;\ell_0)_\bullet\ra \Bun_A(\xi^s,\psi;\ell_0)_\bullet^\tau\quad\text{and}\quad \hAut_A(\xi^s;C)_\bullet\ra \hAut_A(\xi^s;C)_\bullet^\tau\] are equivalences.
\end{lem}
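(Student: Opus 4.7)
The plan is to prove the first equivalence via a fibrewise argument and deduce the second as a pullback. Both sides of the first claim are Kan fibrations over $\Maps_A(X,B;\bar{\ell}_0)_\bullet$ via the forgetful maps that record the underlying map of base spaces; the non-tangential version of this statement is straightforward (no collar needs to be adjusted), and the tangential version is either the non-block analogue of \cref{lem:kanfibration} (proved by the same stabilisation trick used there) or the pullback of it along $\Maps_A(X,B;\bar{\ell}_0)_\bullet \subset \BlockMaps_A(X,B;\bar{\ell}_0)_\bullet$. Both forgetful maps commute with stabilisation in $m$, so in the colimit we obtain a commutative square of Kan fibrations with identical base, and it suffices to show that the induced map of colimits of fibres is an equivalence.

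Fix $\bar{\phi}\colon \Delta^p\times X \to \Delta^p\times B_{d+m}$ over $\Delta^p$ at stage $m$. The fibre on the non-tangential side is the space of sections, relative to $\ell_0$ on $\Delta^p\times A$, of the bundle $\Iso(\xi\oplus\varepsilon^m, \bar{\phi}^*\psi_{d+m})$ over $\Delta^p\times X$. Using the canonical trivialisation $\tau_{\Delta^p}\cong \Delta^p\times\bfR^p$, which is compatible with the collar isomorphisms appearing in \eqref{equ:collarsquare}, the tangential fibre identifies with the relative section space of $\Iso(\xi\oplus\varepsilon^{m+p},\bar{\phi}^*\psi_{d+m}\oplus\varepsilon^p)$ over $\Delta^p\times X$, and the extension map becomes the inclusion of section spaces corresponding to the block embedding $\GL_{d+m}\hookrightarrow \GL_{d+m+p}$, $A\mapsto \mathrm{diag}(\id_p,A)$.

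The homotopy fibre of $\GL_{d+m}\hookrightarrow \GL_{d+m+p}$ is the Stiefel manifold $V_p(\bfR^{d+m+p})$, which is $(d+m-1)$-connected, so by obstruction theory on the finite CW-complex $X$ the map of relative section spaces becomes arbitrarily highly connected as $m$ grows. Passing to the colimit over $m$---whose stabilisation maps on the tangential side are identified, via the canonical trivialisation of $\tau_{\Delta^p}$ together with the structure maps of $\psi$, with the standard stabilisation maps on the non-tangential side---yields an equivalence on fibres, and hence the first asserted equivalence.

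For the second equivalence, note that $\hAut_A(\xi^s;C)_\bullet \subset \Bun_A(\xi^s,\xi^s;\inc)_\bullet$ and its tangential version are the preimages of $\hAut_A(X;C)_\bullet \subset \Maps_A(X,X;\inc)_\bullet$ under the Kan fibrations discussed above, so the fibrewise equivalence established in the first part restricts to a fibrewise equivalence over $\hAut_A(X;C)_\bullet$, yielding the claim. The main technical obstacle will be the bookkeeping required to track the collar condition \eqref{equ:collarsquare} under the canonical trivialisation of $\tau_{\Delta^p}$ and to verify that all stabilisations commute as required; once this is in place, the core input is just the standard connectivity estimate for Stiefel manifolds.
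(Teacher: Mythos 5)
Your overall strategy --- compare the two Kan fibrations over $\Maps_A(X,B;\bar{\ell}_0)_\bullet$ and show the map on fibres becomes an equivalence in the colimit --- is a reasonable alternative to the paper's direct homotopy-group computation, and the reduction of the $\hAut$ case to the $\Bun$ case by pullback is correct. However, the identification of the tangential fibre on which your argument rests is false as stated.

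You assert that the canonical trivialisation $\tau_{\Delta^p}\cong\Delta^p\times\bfR^p$ ``is compatible with the collar isomorphisms appearing in \eqref{equ:collarsquare}'' and conclude that the tangential fibre is the full relative section space of $\Iso(\xi\oplus\varepsilon^{m+p},\bar{\phi}^*\psi_{d+m}\oplus\varepsilon^p)$, with the extension map the block inclusion $\GL_{d+m}\hookrightarrow\GL_{d+m+p}$. No such compatible trivialisation exists: already for $p=1$ the collar isomorphism $\varepsilon\cong\tau_{\Delta^1}|_{\Delta^1_i}$ at the face $i=0$ is induced by the derivative $d/ds$ of $c_0(s)=(s,1-s)$, namely the vector $(1,-1)$, while at the face $i=1$ it is induced by $d/ds$ of $c_1(s)=(1-s,s)$, namely $(-1,1)$; these are negatives of one another, so cannot both extend to a single trivialisation of $\tau_{\Delta^1}$. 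Consequently, after trivialising, the collar condition \eqref{equ:collarsquare} imposes a boundary constraint on the sections that is \emph{not} the constant one, the tangential fibre is a proper subset of the naive section space, and the Stiefel-manifold connectivity estimate for $\GL_{d+m}\hookrightarrow\GL_{d+m+p}$ does not apply directly.

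This is not mere ``bookkeeping'': it is the genuine content of the lemma. The paper's proof chooses an (incompatible) trivialisation $F$ of $\tau_{\Delta^p}$, takes as candidate preimage the conjugate $\phi^F$ of the given tangential bundle map, and then must work to show that $\phi^F$ and the $p$-fold stabilisation of $\phi$ represent the same class: it uses a path $\gamma$ in $\GL(\bfR^p\times\bfR^p)$ from the identity to the coordinate swap to build a homotopy $\widetilde H$, descends it along the map $c$ to a candidate $(p+1)$-simplex $\bar H$, and finally reparametrises by a block diffeomorphism $\alpha$ of $\Delta^{p+1}$ to repair the collar condition on the $p$th face. Some argument of this type --- exploiting that one has stabilised enough to homotope the transition function that measures the incompatibility --- is needed to close the gap in your step 2.
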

\begin{proof}
The proof for the two maps are essentially identical; we restrict our attention to the first map. Its source and target have the Kan property (the source because it is the singular complex of a space, the target by \cref{cor:tangentialiskan}), so we may test the claim on semi-simplicial homotopy groups. The two semi-simplicial sets involved have the same $0$-simplices, so the map is clearly surjective on path components. To show that it is surjective on homotopy groups in positive degrees, we fix a semi-simplicial base point in $\Bun_A(\xi^s,\psi;\ell_0)_\bullet$ by choosing a $0$-simplex $\ell\colon\xi\oplus\varepsilon^l\ra\psi_{d+l}$ and taking products with the tangent bundles $\tau_{\Delta^p}$. Let \[\phi\colon\xi\oplus\varepsilon^k\times \tau_{\Delta^p} \lra\psi_{d+k}\times \tau_{\Delta^p}\] be a $p$-simplex that represents a class in $\pi_p(\Bun_A(\xi^s,\psi;\ell_0)^\tau_\bullet;\ell)$. Up to changing $\phi$ within its homotopy class, we may assume that the underlying map $\bar{\phi}\colon X\times\Delta^p\ra B_{d+k}\times \Delta^p$ satisfies the collaring condition from \cref{sect:blockspaces} for some $0<\epsilon<1/2$. By replacing $\xi$ with $\xi\oplus\epsilon^k$, we may assume $k=0$. Fixing a trivialisation $F\colon \tau_{\Delta^p}\cong \bfR^p\times\Delta^p$, our candidate for a preimage in $\pi_p(\Bun_A(\xi^s,\psi;\ell_0)_\bullet;\ell)$ is the class defined by the composition
\begin{equation}\label{equ:composition-appendix}\xi\oplus\varepsilon^p\times\Delta^p\xlra{\id_{\xi}\times F^{-1}}\xi\times\tau_{\Delta^p}\xlra{\phi}\psi_{d}\times \tau_{\Delta^p}\xlra{\id_{\psi_{d}}\times F} \psi_d\oplus\varepsilon^p\times\Delta^p\lra\psi_{d+p}\times\Delta^p,\end{equation} where the last arrow is induced by the structure map of $\psi$. To justify this, we will show the existences of a $(p+1)$-simplex in $\Bun_A(\xi\oplus\varepsilon^p\oplus\varepsilon^2,\psi_{d+p+2};\ell_0)^\tau_\bullet$ which on the $p$th face agrees with the $(p+2)$-fold stabilisation of $\phi$, on the $(p+1)$st face with the image $\phi^F\in  \Bun_A(\xi\oplus \epsilon^p\oplus\epsilon^2,\psi_{d+p+2};\ell_0)_p^\tau$ of the $2$-fold stabilisation of the composition \eqref{equ:composition-appendix} and on the remaining faces with the basepoint. To this end, we consider the linear map
\[\map{\Lambda}{\bfR^p\times \bfR\times \bfR\times \bfR^p}{\bfR^p\times \bfR\times \bfR\times \bfR^p}{(x,u,v,y)}{\begin{cases}(y,u,v,x)&\text{if }p\text{ is even}\\
(y,v,u,x)&\text{if }p\text{ is odd},
\end{cases}},\]
which has determinant $1$ (this is the reason we introduced the additional $(\bfR\times\bfR)$-coordinate), so there exists a path $\gamma\colon [0,1]\ra\GL(\bfR^p\times \bfR\times \bfR\times\bfR^p)$ from the identity to $\Lambda$, which we may choose to be constant in a neighborhood of $[0,\epsilon]\cup[1-\epsilon,1]$. In terms of this path, we define a homotopy $H_t$ of bundle automorphisms of $\varepsilon^p\oplus\tau_{\Delta^p}$ covering the identity by
\[\varepsilon^p\oplus\varepsilon^2\oplus\tau_{\Delta^p}\xra{\id_{\bfR^p\times\bfR^2}\times F}\bfR^p\times \bfR^2\times \bfR^p\times\Delta^p\xra{\gamma_t\times\id_{\Delta^p}}\bfR^p\times \bfR^2\times \bfR^p\times\Delta^p\xra{\id_{\bfR^p\times \bfR^2}\times F^{-1}}\varepsilon^p\oplus\tau_{\Delta^p}.\] Using $H$, we define a homotopy $\widetilde{H}$ of bundle maps as the composition \[\xi\oplus\varepsilon^p\oplus\varepsilon^2\times\tau_{\Delta^p}\xlra{\id_{\xi}\times H_t}\xi\oplus\varepsilon^p\oplus\varepsilon^2\times\tau_{\Delta^p}\xra{\phi}\psi_{d}\oplus\varepsilon_p\oplus\varepsilon^2\times\tau_{\Delta^p}\xlra{\id_{\psi_d}\times H_t^{-1}}\psi_{d}\oplus\varepsilon_p\oplus\varepsilon^2\times\tau_{\Delta^p}\ra\psi_{d+p+2}\times\tau_{\Delta^p},\] where the last map is induced by the structure map of $\psi$. Going through the definitions, one checks that this is a homotopy from the stabilisation of $\phi$ to $\phi^F$ and that its underlying homotopy of maps of spaces is constantly $\bar{\phi}$. Using the canonical trivialisation of $\tau_{[0,1]}$, this homotopy gives rise to a bundle map $\widetilde{H}\colon \xi\oplus\varepsilon^p\oplus\varepsilon^2\times\tau_{\Delta^p\times [0,1]}\ra\psi_{d+p+2}\times\tau_{\Delta^p\times [0,1]}$ which one checks to descend uniquely to a dashed arrow making the diagram 
\[
\begin{tikzcd}
\xi\oplus \varepsilon^p\oplus\varepsilon^2\times \tau_{\Delta^p\times [0,1]}\arrow[r,"\widetilde{H}"]\arrow[d,"\id_{\xi\oplus \varepsilon^p\oplus\varepsilon^2}\times\mathrm{d}c",swap]&\psi_{d+p+2}\times \tau_{\Delta^p\times[0,1]}\arrow[d,"\id_{\xi\oplus \varepsilon^p\oplus\varepsilon^2}\times\mathrm{d}c"]\\
\xi\oplus \varepsilon^p\oplus\varepsilon^2\times \tau_{\Delta^{p+1}}\arrow[r,dashed,"\bar{H}"]&\psi_{d+p+2}\times \tau_{\Delta^{p+1}}
\end{tikzcd}
\]
commute, where $c$ is the map
\[\map{c}{\Delta^p\times [0,1]}{\Delta^{p+1}}{((x_0,\ldots,x_p),s)}{(x_0,\ldots,x_{p-1},s\cdot x_p,(1-s)\cdot x_{p})}\] whose derivative is surjective (though not fibrewise). The resulting bundle map $\bar{H}$ has the correct behaviour on all faces, so almost provides a $(p+1)$-simplex in $\Bun_A(\xi\oplus\varepsilon^p\oplus\varepsilon^2,\psi_{d+p+2};\ell_0)^\tau_\bullet$ as wished. The only problem is that it does not satisfy the collaring condition \eqref{equ:collarsquare} for $i=p$, but this can be rectified as follows: choosing a block diffeomorphism $\alpha\colon \Delta^{p+1}\ra\Delta^{p+1}$ which agrees with the identity on $\Delta^{p+1}_{p+1, \delta}$ for some $\delta>0$ (see \cref{sect:blockspaces} for the notation) and makes the diagram
\[
\begin{tikzcd}[row sep=0cm]
&\Delta^{p+1}\arrow[dd,"\alpha"]\\
{[0,\epsilon)\times \Delta^p \backslash\Delta^p_{p,{\delta'}}}\arrow[ur,"c_{p}",bend left=10]\arrow[dr,"c",bend right=10,swap]&\\
&\Delta^{p+1}
\end{tikzcd}
\]
commute for some $\delta'>\epsilon$, the composition
\[\xi\oplus \varepsilon^p\oplus\varepsilon^2\times \tau_{\Delta^{p+1}}\xlra{\id_{\xi\oplus\varepsilon^p\oplus\varepsilon^2}\times \mathrm{d}\alpha}\xi\oplus \varepsilon^p\oplus\varepsilon^2\times \tau_{\Delta^{p+1}}\xlra{\bar{H}}\psi_{d+p+2}\times \tau_{\Delta^{p+1}}\xlra{\id_{\xi\oplus\varepsilon^p\oplus\varepsilon^2}\times \mathrm{d}\alpha^{-1}}\psi_{d+p+2}\times \tau_{\Delta^{p+1}}\] defines a $(p+1)$-simplex as required. This finishes the proof of surjectivity of the map on homotopy groups and injectivity follows from a relative version of the argument.
\end{proof}

\section{A lemma on symplectic derivations}This appendix serves to record a minor generalisation of Proposition 3.9 in \cite{BerglundMadsen}. We adopt the notation and conventions introduced in Sections~\ref{sec:gradings} and \ref{sec:lie-algs} and fix a principal ideal domain $R$ with $2\in R^\times$, an integer $k\in \bfZ$, and a graded $R$-module $V$ that is degreewise free, together with map $\lambda\colon V\otimes V\ra R[0]$ of degree $-k$ such that the adjoint map $s^{-k} V\ra  V^\vee$ given by mapping $v$ to $\lambda(v,-)$ is an isomorphism. We moreover assume that $\lambda$ is graded antisymmetric, i.e.\,that $\lambda(x,y)=-(-1)^{|x||y|}\lambda(y,x)$ holds for all $x,y\in V$. 

\begin{ex}\label{ex:appendix}Let $M$ be a compact $R$-oriented $d$-manifold whose $R$-homology is free. If $\partial M\simeq S^{d-1}$, then the desuspension $s^{-1}\widetilde{\oH}_*(M;R)$  of the reduced homology qualifies as an example of $V$ as above where $k=d-2$ and $\lambda$ is induced by the intersection form. Concretely, $\lambda$ is defined to vanish on elements $s^{-1}x\otimes s^{-1}y$ unless $|x|+|y|=d$ in which case it is given by $\lambda(s^{-1}x\otimes s^{-1}y)=(-1)^{|x|}\langle D(x)\cup D(y),[M,\partial M]\rangle$, where $D(-)$ denotes the Poincaré duality isomorphism.\end{ex}

Following \cite[Sec.\,3.5]{BerglundMadsen}, given a homogenous basis $\{a_i\}$ of $V$, we use the dual basis  $\{a_i^\#\}$ determined by $\lambda(a_i,a_j^\#)=\delta_{ij}$ to define an element $\omega\coloneq \sum_ia_i^\#\otimes a_i\in V\otimes V$. This element can be seen to be independent of the chosen basis (see p.\,91 loc.cit.\,for a proof for $R=\bfQ$, which generalises) and to be homogenous of degree $k$, since $|a_i^{\#}|=k-|a_i|$. We identify $[V,V]$ with a subspace of $V\otimes V$ via the antisymmetrisation map $[V,V]\ra V\otimes V$ mapping $[x,y]$ to $x\otimes y-(-1)^{|x||y|}y\otimes x$, which is injective since its composition with the bracket is multiplication by $2\in R^\times$. Under this identification, one verifies 
\begin{equation}\label{equ:canonical-element-app}\textstyle{\omega=\tfrac{1}{2}\sum_i[a_i^{\#},a_i]\in [V,V]}.\end{equation}
as in p.\,91 loc.cit. Given a dg Lie algebra $K$ over $R$ and a map $f\colon V\ra K$ of graded $R$-modules, we consider the diagram of graded $R$-modules
\begin{equation}\label{equ:diagram-appendix}
\begin{tikzcd}
\Der^f(\bfL(V),K)\arrow[r,"\ev_{\omega}"]\arrow["\cong",d,swap]&s^{-k}K\\
s^{-k}K\otimes V\arrow[ur,"{\left[-,f(-)\right]}",swap]&
\end{tikzcd}
\end{equation}
whose vertical isomorphism is given by the composition
\[\Der^f(\bfL(V),K)\xlra{\cong}\Hom(V,K)\xlra{\cong} K\otimes V^\vee\xlra{\cong}K\otimes s^{-k}V=s^{-k}K\otimes V,\]
where the first map is given by restriction to generators, the second map is the canonical isomorphism, and the third map is induced by the inverse of the adjoint map of $\lambda$.

\medskip

The following lemma is a straight-forward extension of Proposition 3.9 of \cite{BerglundMadsen}, which corresponds to the case $R=\bfQ$, $K=\bfL(V)$, and $f=\inc_{V\subset \bfL(V)}$.

\begin{lem}\label{lem:appcommutative}The diagram \eqref{equ:diagram-appendix} is commutative.
\end{lem}
\begin{proof}
Following the beginning of the proof of \cite[Prop.\,3.9]{BerglundMadsen}, one sees that a derivation $\theta\in \Der^f(\bfL(V),K)$ evaluates on the special element $\omega$ as $\theta(\omega)=\sum_i[\theta(a_i^{\#}),f(a_i)]$. We will use this to show the equivalent claim that the inverse of the vertical map in \eqref{equ:diagram-appendix} followed by $\ev_\omega$ agrees with $[-,f(-)]$. This inverse assigns an element $k\otimes v\in s^{-k}K\otimes V$ the unique $f$-derivation $\theta_{k,v}$ that extends the linear map $\lambda(v,-)\cdot k\colon V\ra K$. Using that for $v\in V$, we have $v=\sum_i\lambda(v,a_i^{\#})\cdot a_i$ by the definition of the dual basis, we compute
\[\textstyle{\theta_{k,v}(\omega)=\sum_i[\theta_{k,v}(a_i^{\#}),f(a_i)]=\sum_i[\lambda(v,a_i^{\#})\cdot k,f(a_i)]=[k,f(\sum_i\lambda(v,a_i^{\#})\cdot a_i)]=[k,v]},\]
which implies the claim.
\end{proof}

\vspace{-0.4cm}

\bibliographystyle{amsalpha}
\bibliography{literature}
\vspace{0.2cm}

\end{document}